\pdfoutput = 1
\documentclass{paper}
\usepackage{graphicx}
\usepackage{amsthm,amssymb,amscd,amsmath,latexsym,indentfirst,color,amsfonts}
\usepackage[mathcal]{eucal}
\usepackage{geometry,amsthm,graphics,tabularx,shapepar,float}
\usepackage{tikz-cd}
\usetikzlibrary{cd}
\usepackage{stackrel}
\usepackage{graphicx}
\usepackage{extarrows}
\usepackage[all]{xypic}
\usepackage{tikz}
\usepackage{bm}
\usepackage{extarrows}

\usetikzlibrary{cd}
\usetikzlibrary{calc}
\theoremstyle{plain}
\newtheorem{theorem}{Theorem}[section]
\newtheorem{prop}[theorem]{Proposition}
\newtheorem{coro}[theorem]{Corollary}
\newtheorem{lemma}[theorem]{Lemma}
\newtheorem{ex}[theorem]{Example}

\newtheorem{rem}[theorem]{Remark}

\newtheorem{defi}[theorem]{Definition}
\newtheoremstyle{defandprop}
{3pt}
{3pt}
{\itshape}
{}
{ }
{\thmname{#1}\thmnumber{ #2}.\thmnote{ (#3)}}
\theoremstyle{defandprop}
\newtheorem{defprop}[theorem]{Proposition and Definition}

\newcommand{\ble}{\begin {lemma}}
\newcommand{\ele}{\end {lemma}}
\newcommand{\bthm}{\begin {theorem}}
\newcommand{\ethm}{\end {theorem}}
\newcommand{\bco}{\begin {coro}}
\newcommand{\eco}{\end {coro}}
\newcommand{\bex}{\begin {ex}}
\newcommand{\eex}{\end {ex}}
\newcommand{\be}{\begin {equation}}
\newcommand{\ee}{\end {equation}}
\newcommand{\bp}{\begin {proof}}
\newcommand{\ep}{\end {proof}}
\newcommand{\rt}{\rightarrow}
\newcommand{\ot}{\otimes}
\newcommand{\lb}{\label}

\newcommand{\la}{\lambda}

\newcommand{\stm}{\setminus}

\linespread{2.0}

\begin{document}
	
	\title {Types of elements in non-commutative Poisson algebras and Dixmier Conjecture} 
	\begin{tiny}
		\author{ Zhennan Pan, 	Gang Han \thanks{Corresponding author}
			\\School of Mathematics, Zhejiang
			University\\pzn1025@163.com, mathhgg@zju.edu.cn
		}
	\end{tiny}

	\date{March 14, 2025}		
	\maketitle
	
	\begin{abstract}
		Non-commutative Poisson algebras ar	e the algebras having an associative algebra structure
		and a Lie algebra structure together with the Leibniz law.	Let $P$ be a non-commutative Poisson algebra  over  some algebraically closed field of characteristic zero.  For any $z\in P$,  there exist four subalgebras of $P$  associated with the inner derivation $ad_z$ on $P$. Based on the relationships between these four subalgebras, elements of $P$ can be divided into eight types. We will mainly focus on two types of non-commutative Poisson algebras: the usual Poisson algebras and the associative algebras with the commutator as the Poisson bracket. The following problems are studied for such non-commutative Poisson algebras:  how the type of an element changes under homomorphisms between non-commutative Poisson algebras, how the  type of an element  changes after localization, and what the type of the elements of the form $z_1 \ot z_2$ and $z_1 \ot 1 + 1 \ot z_2$  is in the tensor product of non-commutative Poisson algebras $P_1\ot P_2$.  As an application of  above results, one knows that Dixmier Conjecture for $A_1$ holds under certain conditions.  Some properties of the Weyl algebras are also obtained, such as the commutativity of certain subalgebras.
	\end{abstract}
	\keywords{non-commutative Poisson algebra, Weyl algebra, Dixmier Conjecture, GK dimension.}

	\tableofcontents	
	
	\section{Introduction}
	\setcounter{equation}{0}\setcounter{theorem}{0}
	\label{chap_int}
	
	In the influential paper \cite{d},	Dixmier conducted an in-depth study of the Weyl algebra $ A_1 $ over fields of characteristic 0.  For any element $z\in A_1$, one has the inner derivation $ad_z$ on $A_1$, and there are four subalgebras related to $ad_z$: $F(z)$, $N(z)$, $D(z)$, and $C(z)$. Based on the relationships between these four subalgebras, Dixmier divided the non-central elements of $A_1$ into five disjoint sets and further studied the properties of the elements belonging to each set. Based on these results, Dixmier determined the generators of the automorphism group of $A_1$. Bavula showed that certain non-commutative domains with GK dimension less than 3 admit an analogous partition as in $A_1$,  and gave many examples \cite{b}.
	
	Dixmier posed six questions at the end of his paper \cite{d}, of which the 3rd, 5th, and 6th have been answered{\cite{J1975}}{\cite{VB2005}}, while the 1st, 2nd, and 4th remain open. Dixmier's first question is: Are all endomorphisms on $ A_1 $ isomorphisms? The similar question for the $n$-th Weyl algebra is referred to as the Dixmier Conjecture. It is known that the Dixmier Conjecture,  the Jacobian Conjecture, and the Poisson Conjecture are all equivalent
	{\cite{HB1982}}{\cite{YT2005}}\cite{av}.\bigskip
	
	Poisson structures appear in a large variety of different contexts, ranging from classical  mechanics,  differential geometry, 	algebraic geometry, and etc. 
	For a systematic study of Poisson algebras, please refer to \cite{cap}. 	 A non-commutative Poisson algebra is an algebra having a not necessarily commutative associative algebra structure
	and a Lie algebra structure, together with the Leibniz law.
	An  associative algebra $A$ can also be regarded as a non-commutative Poisson algebra, with $\{z, w\}= [z,w]=zw-wz$ for $z,w\in A$. To our knowledge,  Kubo's paper \cite{FK1996} was among the earliest works on  non-commutative Poisson algebras. This paper mainly focuses on non-commutative Poisson algebras of Class 1 (which are classical Poisson algebras) and Class 2 (which are associative algebras where the commutator serves as the Poisson bracket).
	
	Given a  non-commutative Poisson algebra $P$,  for any $z\in P$ there are also four subalgebras related to $ad_z$: $F(z)$, $N(z)$, $D(z)$, and $C(z)$, as defined in \cite{d}.
	Based on the relationships between these four subalgebras,  	 	 the elements in  $P$ can be divided into 8 disjoint sets. Thus there are at most 8 types of elements
	in a non-commutative Poisson algebra. The type of an element does not change after applying an automorphism of $P$. Some basic properties of elements of different types are studied in Section 2. 
	
	How the type of an element changes under homomorphisms between non-commutative Poisson algebras is  studied in Section 3. The main result is Theorem \ref{th39}, which will help in the study of homomorphisms between non-commutative Poisson algebras. As a corollary, one knows that if the image of an endomorphism $\varphi$ of the Weyl algebra $A_1$ contains some strictly nilpotent or strictly semisimple element, then  $\varphi$ is an automorphism (Corollary \ref{c311}). 
	
	Next in Section 4, we study how the  type of an element  in a non-commutative Poisson algebra  changes after localization, and obtain some preliminary results.  The torsion algebras of certain elements in the non-commutative Poisson algebra  are determined. See Theorem \ref{p04}.
	
	Finally we study the type of elements in the tensor product of non-commutative Poisson algebras. 	
	Let $P_1$ and $P_2$ be non-commutative Poisson algebras of the same class (Class 1 or Class 2). Then there is a natural Poisson algebra structure on  the tensor product $P_1 \ot P_2$ compatible with those on $P_1$ and $P_2$. Assume that  there are   finite discrete filtrations on $P_1$ and $P_2$ respectively. For an arbitrary element in  $P_1 \ot P_2$, it is hard to determine its type. Therefore we focus on elements in  $P_1 \ot P_2$ of special forms: $z_1 \ot 1 + 1 \ot z_2$ and $z_1 \ot z_2$.
	
	Let $\Theta=z_1 \ot z_2$. Under the assumption that  $gr\ P_1$ and $gr\ P_2$ are commutative under Poisson bracket, one can determine the subalgebras associated with $\Theta $ in terms of those subalgebras associated with $z_i$ for $i=1,2$. Under mild restrictions, one has 
	$F(\Theta)= N(z_1)\otimes N(z_2)$. See Theorem \ref{thm2}. The other cases are also treated.
	Next, we determine  the type of $\Theta$ under various conditions and apply the above results to Weyl algebras. See Section 5.

	Let $ \Gamma = z_1 \ot 1 + 1 \ot z_2$. It is shown that 	$F(\Gamma)= F(z_1)\otimes F(z_2)$ in Theorem \ref{thm1}, which	 does not require the commutativity of $gr\ P_1$ and $gr\ P_2$ under Poisson bracket. We determine the subalgebras associated with  $ \Gamma$ and  the type of $\Gamma$ under various conditions, and apply the above results to Weyl algebras. See Section 6.
	\\[1mm]
	
	Below are some conventions  used in this paper.\\[1mm]
	
	All associative  algebras are assumed to be unital.
	
	$K$ is an algebraically closed field of characteristic zero; $K^{\times}=K \setminus\{0\}$;	 $a, b, c, d, \lambda,\mu$ are elements in $K$.
	
	For a non-commutative Poisson algebra $P$ over $K$, $GK(P)$ denotes the GK dimension of the associative algebra $(P,\ \cdot)$ over $K$.

	$k,l,m,n,s,t$ are integers.
	
	$A_n$ is the $n$-th Weyl algebra over $K$; $p,q$ are the generators of $A_1$ with $[q, p] = 1$.
	
	Given a non-commutative Poisson algebra $P$, $Z(P)$ is the center of the algebra $P$; $P^{*} = P\setminus \{0\}$.
	
	The tensor product "$\ot$" will be always over $K$.
	
	"iff" stands for "if and only if".
	
	\centerline{\textbf{Acknowledgements}}
	
	We would like to heartily thank Yulin Chen and Yangjie Yin for  helpful discussions.

	\section{Types of elements in non-commutative Poisson algebras}\label{chap2}
	\setcounter{equation}{0}\setcounter{theorem}{0}
	
	Let us recall the definition of non-commutative Poisson algebras.
	
	\begin{defi}\label{definition1}
		Let $P$ be a  vector space over some field $F$ endowed with two bilinear operations: a product "$\cdot$" and a  bracket
		$\{\cdot, \cdot\}$,   such that
		
		(1) $(P,\ \cdot)$ is
		an associative unital algebra  over  $F$, which is not necessarily commutative,
		
		(2) $(P, \{\cdot, \cdot \})$ is a Lie algebra  over  $F$, 
		
		(3) (Leibniz rule)
		For any $x, y, z \in P$,
		$	\{x ,y\cdot z\} = \{x, y\} \cdot z+ y\cdot \{x, z\}$.
		
		Then we will call $P$ a non-commutative Poisson algebra.
	\end{defi}

	
	By (3) in the Definition \ref{definition1} and the skew-symmetry of $\{,\}$, one has $$\{y\cdot z,x\} = y\cdot\{z, x\} + \{y,x\}\cdot z,\quad for\ any\ x, y, z \in P.$$
	We will mainly deal with two (large) classes of  non-commutative Poisson algebras in this paper: 	A non-commutative Poisson algebra $P$ such that $(P,\ \cdot)$ is commutative will be referred to as the non-commutative Poisson algebras of  Class 1; a non-commutative associative algebra $A$ such that $\{a,b\}=[a,b]=ab-ba$ for any $a,b\in A$ will be referred to as a non-commutative Poisson algebra of Class 2. Note that there are also non-commutative Poisson algebras such that $\{,\}$ is not the commutator.

	Let $P_1,P_2$ be non-commutative Poisson algebras over $K$. A $K$-linear map $f:P_1\rt P_2$ is called a Poisson homomorphism if $f$ is a homomorphism of associative algebras and is also a homomorphism of Lie algebras. Then the objects of non-commutative Poisson algebras with morphisms being the Poisson homomorphisms form a categoty. Note that the category of Poisson algebras and the category of unital associative algebras are both subcategories of the category of non-commutative  Poisson algebras.
	
	Let $P$ be a  non-commutative Poisson algebra.  The center of
	$(P,\ \cdot)$ and $ (P,\{,\})$ will be denoted by 	$Z(P,\ \cdot)$ and $ Z(P,\{,\})$ respectively. Let $Z(P)=
	Z(P,\ \cdot)\cap Z(P,\{,\})$ be the center of $P$. It is directly verified that $Z(P,\{,\})$ is a Poisson subalgebra of $P$, while $Z(P)$ is an associative subalgebra of 
	$(P,\ \cdot)$. It is easy to see that for non-commutative Poisson algebras of Class 1 or 2, \[Z(P)=Z(P,\{,\}).\]

	In the rest of this section, $P$ will always denote a non-commutative Poisson algebra over $K$ of Class 1 or 2. 
	
	Let \[Der_K(P)=\{f\in End_K(P)\mid\forall z,w\in P, f(zw)=f(z)w+zf(w), f(\{z,w\})=\{f(z),w\}+\{z,f(w)\}.\}\] It is clear that $Der_K(P)$ is a Lie algebra over $K$. For $z \in A$, define 
	$$ad_z:P\rightarrow P,\quad w\mapsto \{z, w\}.$$ 
	Then $ad_z\in Der_K(P)$. One has
	$$ad: P\rightarrow Der_K(P),\quad z\mapsto ad_z,$$
	which is a Lie algebra homomorphism with $\ker ad = Z(P,\{,\})$. We define
	\begin{align}
		&Ev(z) = \big\{\lambda \in K \mid \exists x \in P^{*} \text{ s.t. } ad_z (x) = \lambda x\big\},\notag\\
		&F(z) = \big\{x\in P \mid \dim_KK[ad_z](x)<\infty\big\},\notag\\
		&N(z) = \big\{x\in P \mid \exists m \text{ s.t. } ad_z^m(x) = 0\big\},\notag\\
		&C(z) = \big\{x\in P \mid ad_z(x) =0\big\},\notag\\
		&D(z) = \bigoplus_{\lambda \in Ev(z)} D(z,\lambda),\quad D(z,\lambda):=\big\{ x\in P \mid ad_z(x) = \lambda x\big\}.\notag
	\end{align}
	The set	$Ev(z)$ is an additive sub-monoid of $K$. In particular, $0\in Ev(z)$.

	$F(z)$  also has a similar expression:
	$$F(z) = \bigoplus_{\lambda \in Ev(z)}F(z,\lambda),\quad F(z,\lambda) = \bigcup_{n\geq 1} \ker(ad_z -\lambda)^n.$$
	Thus $N(z) = F(z, 0)$. One has  $F(z) = N(z)$ and $D(z)=C(z)$ iff $Ev(z) = \{0\}$, and $D(z)\supsetneq C(z)$ iff $Ev(z)\supsetneq \{0\}$.
	
	We call $F(z)$ the torsion algebra of $z$, $N(z)$ the nil-algebra of $z$, $D(z)$ the eigenvalue algebra of $z$, $C(z)$ the centralizer of $z$. We will refer to $F(z), N(z),D(z),C(z)$ as the algebras \textbf{associated with} $z$. They satisfy the following relations:
	$$N(z) \subseteq F(z),\quad D(z)\subseteq F(z),\quad C(z) = N(z)\cap D(z) .$$
	The following lemma obviously holds.
	\begin{lemma}\label{lemma0}
		Take $z \in P$. If $F(z) = N(z)$, then $D(z) = C(z)$ and $ Ev(z)=\{0\}$. If $F(z) = D(z)$, then $N(z) = C(z)$.
		\end{lemma}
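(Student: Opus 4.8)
The plan is to deduce everything from the three relations recalled just above the statement, namely $N(z)\subseteq F(z)$, $D(z)\subseteq F(z)$, and $C(z)=N(z)\cap D(z)$, together with the defining property of $Ev(z)$. The key observation is that the identity $C(z)=N(z)\cap D(z)$ converts an inclusion between the ``large'' subalgebras into an equality between the ``small'' ones.

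For the first assertion, suppose $F(z)=N(z)$. Since $D(z)\subseteq F(z)=N(z)$, intersecting with $D(z)$ gives $N(z)\cap D(z)=D(z)$, hence $C(z)=D(z)$ by the relation $C(z)=N(z)\cap D(z)$. To see that $Ev(z)=\{0\}$, take any $\lambda\in Ev(z)$; by definition there is $x\in P^{*}$ with $ad_z(x)=\lambda x$, so $x\in D(z,\lambda)\subseteq D(z)=C(z)=\ker ad_z$, and therefore $0=ad_z(x)=\lambda x$. As $x\neq 0$, this forces $\lambda=0$, so $Ev(z)=\{0\}$. For the second assertion, suppose $F(z)=D(z)$. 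Then $N(z)\subseteq F(z)=D(z)$, so $N(z)\cap D(z)=N(z)$, and again $C(z)=N(z)\cap D(z)$ yields $C(z)=N(z)$.

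There is essentially no obstacle here, which is why the statement is flagged as obvious; the only thing to notice is the bookkeeping with $C(z)=N(z)\cap D(z)$. Alternatively, one could argue through the eigenspace decompositions $F(z)=\bigoplus_{\lambda\in Ev(z)}F(z,\lambda)$ and $D(z)=\bigoplus_{\lambda\in Ev(z)}D(z,\lambda)$: directness of the sums, together with $0\neq D(z,\lambda)\subseteq F(z,\lambda)$ for each $\lambda\in Ev(z)$, immediately gives $Ev(z)=\{0\}$ in the first case, but the inclusion argument above is shorter.
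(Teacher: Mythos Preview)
Your proof is correct and is exactly the kind of argument the paper has in mind; the paper itself offers no proof beyond declaring the lemma obvious, and your derivation from the relations $N(z)\subseteq F(z)$, $D(z)\subseteq F(z)$, $C(z)=N(z)\cap D(z)$ (together with the equivalence of $Ev(z)=\{0\}$ with $F(z)=N(z)$ and $D(z)=C(z)$, stated just before the lemma) is precisely that obvious argument.
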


	\begin{defi}
		Let $P$ be a non-commutative Poisson algebra and $z \in P$.
		
		\noindent1.	If $Ev(z) = \{0\}$, then write
		
		$z\in \Omega_0(P)$, if $C(z) = P$; Clearly, $\Omega_0(P) = Z(P)$;
		
		$z\in \Omega_0'(P)$, if $C(z) = F(z)\subsetneq P$; 
		
		$z\in \Omega_1(P)$, if $C(z) \subsetneq N(z) = P$; 
		
		$z \in \Omega_1'(P)$, if $C(z) \subsetneq N(z) \subsetneq P$;
		
		\noindent2. If $Ev(z) \supsetneq \{0\}$, then write 
		
		$z\in \Omega_2(P)$, if $ D(z) = F(z) = P$;
		
		$z\in \Omega_2'(P)$, if $ D(z) = F(z) \subsetneq P$;

		$z\in \Omega_3(P)$, if $ D(z) \subsetneq F(z) = P$;
		
		$z \in \Omega_3'(P)$, if $ D(z) \subsetneq F(z) \subsetneq P$.

		When there is no ambiguity, we abbreviate $\Omega_i(P)$ and $\Omega'_i(P)$ as  $\Omega_i,\Omega_i', i = 0,1,2,3$.

		Elements in $\Omega_1\ (\text{resp. }\Omega_1',\ \Omega_1\cup \Omega_1') $ are called strictly nilpotent (resp. weakly nilpotent, nilpotent),  elements in $\Omega_2\ (\text{resp. }\Omega_2',\  \Omega_2\cup \Omega_2')$ are called strictly semisimple (resp. weakly semisimple, semisimple), and elements in $\Omega_3\ (\text{resp. }\Omega_3',\ \Omega_3\cup \Omega_3')$ are called strictly Jordan (resp. weakly Jordan, Jordan). 
		
	\end{defi}
	So the elements in $P$ are divided into the  eight disjoint sets above  and each element of $P$ is of one of the eight types above. It is clear that the type of an element does not change after applying an automorphism of $P$.
	
	Let \[\Omega(P) = \Omega_0(P)\cup\Omega_1(P)\cup\Omega_2(P)\cup\Omega_3(P),\quad \Omega'(P) = \Omega'_0(P)\cup\Omega'_1(P)\cup\Omega'_2(P)\cup\Omega'_3(P).\] It is clear that  $\Omega'(P) =P\setminus\Omega(P) $. If $u\in \Omega(P)$ then $F(u) = P$, and if $w\in \Omega'(P)$, then $F(w) \subsetneq P$. Elements in $\Omega(P)$ will be called strict, and elements in $\Omega'(P)$ will be called weak.

	For the Weyl algebra $A_1$, the sets $\Omega_1,\Omega_1',\Omega_2,\Omega_2',\Omega_0'$ are respectively $\Delta_i$ for $i=1,\cdots,5$   in \cite{d}.

	If $z$ is nilpotent, then $Ev(z)=\{0\}$ and $D(z) = C(z)$; if $z$ is semisimple, then $N(z) = C(z)$. The following conclusion holds:
	\begin{lemma}\label{lemma3}
		Let $z\in P, Ev(z)\supsetneq\{0\}$. If $C(z)\subsetneq N(z)$ and $F(z)= P$, then $z\in \Omega_3$.
		If $C(z)\subsetneq N(z)$ and $F(z)\subsetneq P$, then $z\in \Omega_3'$.
		\end{lemma}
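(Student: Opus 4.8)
The plan is to reduce everything to the relations recorded right after the definitions of the associated algebras, namely $C(z)=N(z)\cap D(z)$ together with $N(z)\subseteq F(z)$ and $D(z)\subseteq F(z)$. Since $Ev(z)\supsetneq\{0\}$, the element $z$ lies in exactly one of the four sets $\Omega_2,\Omega_2',\Omega_3,\Omega_3'$, and these are separated by the two dichotomies ``$D(z)=F(z)$ versus $D(z)\subsetneq F(z)$'' and ``$F(z)=P$ versus $F(z)\subsetneq P$''; both dichotomies are exhaustive because $D(z)\subseteq F(z)\subseteq P$. The hypotheses of the lemma already decide the second dichotomy in each of the two cases, so the only thing left to establish is that $D(z)\neq F(z)$.

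To prove $D(z)\neq F(z)$ I would argue by contradiction: assume $D(z)=F(z)$. Then $N(z)\subseteq F(z)=D(z)$, so $N(z)\cap D(z)=N(z)$. Combining this with $C(z)=N(z)\cap D(z)$ yields $C(z)=N(z)$, which contradicts the standing assumption $C(z)\subsetneq N(z)$. Hence $D(z)\neq F(z)$, and since the inclusion $D(z)\subseteq F(z)$ always holds, we conclude $D(z)\subsetneq F(z)$.

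With $D(z)\subsetneq F(z)$ in hand the conclusions follow directly from the definition of the $\Omega_3$-type sets: in the first case the extra hypothesis $F(z)=P$ gives $z\in\Omega_3$, and in the second case the extra hypothesis $F(z)\subsetneq P$ gives $z\in\Omega_3'$. There is essentially no obstacle here; the statement is a direct unwinding of the definitions, and the only point worth keeping straight is that the hypothesis $Ev(z)\supsetneq\{0\}$ is precisely what restricts $z$ to the list $\Omega_2,\Omega_2',\Omega_3,\Omega_3'$, so that excluding $D(z)=F(z)$ suffices to land $z$ in the appropriate strictly/weakly Jordan set.
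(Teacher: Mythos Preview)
Your proof is correct and follows essentially the same approach as the paper: both arguments establish $D(z)\subsetneq F(z)$ from the hypothesis $C(z)\subsetneq N(z)$ via the relation $C(z)=N(z)\cap D(z)$, and then read off the conclusion from the definitions. The only cosmetic difference is that the paper phrases the strict inclusion as $D(z)\subsetneq N(z)D(z)\subseteq F(z)$ (exhibiting an element of $N(z)\setminus C(z)$ inside $F(z)\setminus D(z)$), whereas you argue by contradiction that $D(z)=F(z)$ would force $N(z)=C(z)$; these are equivalent unwindings of the same identity.
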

	\begin{proof}
		When $C(z)\subsetneq N(z)$, $D(z)\subsetneq N(z)D(z)\subseteq F(z)$, so the above conclusion holds.
	\end{proof}
	We can consider two elements commuting under the bracket in the non-commutative Poisson algebra $P$. Let $z,w\in P,\{z,w\} = 0$, we have $ad_zad_w = ad_wad_z$, and it is direct to prove that  \[F(z)\cap F(w)\subseteq F(z + w ).\] One has the following property.
	\begin{prop}\label{proposition2}
		Let $z,w\in P. $ Assume $\{z,w \}= 0$. If $z\in \Omega_1,w\in \Omega_2$, then $z + w\in \Omega_3$.
		\end{prop}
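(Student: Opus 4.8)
The strategy is to view $ad_{z+w}=ad_z+ad_w$ as an (infinite-dimensional) Jordan decomposition of commuting operators: $ad_w$ is semisimple on all of $P$ because $w\in\Omega_2$, and $ad_z$ is locally nilpotent on all of $P$ because $z\in\Omega_1$. I expect $z+w$ to inherit the eigenvalue monoid of $w$ and the centralizer-defect of $z$. First I would unwind the hypotheses: $z\in\Omega_1$ gives $N(z)=P$, hence $F(z)=P$, and $C(z)\subsetneq P$; $w\in\Omega_2$ gives $D(w)=P$, hence $F(w)=P$, and $Ev(w)\supsetneq\{0\}$. Since $\{z,w\}=0$ we have $ad_zad_w=ad_wad_z$, so the inclusion $F(z)\cap F(w)\subseteq F(z+w)$ noted just above the proposition yields $F(z+w)=P$.

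Next I would use $w\in\Omega_2$ to write $P=\bigoplus_{\lambda\in Ev(w)}D(w,\lambda)$, with $ad_w$ acting as the scalar $\lambda$ on $D(w,\lambda)$. Because $ad_z$ commutes with $ad_w$, each $D(w,\lambda)$ is $ad_z$-stable, and on $D(w,\lambda)$ the operator $ad_{z+w}-\lambda$ coincides with $ad_z$, which is locally nilpotent there; hence $D(w,\lambda)\subseteq F(z+w,\lambda)$. Picking $\lambda\in Ev(w)$ with $\lambda\neq0$ gives $0\neq D(w,\lambda)\subseteq F(z+w,\lambda)$, so $Ev(z+w)\supseteq Ev(w)\supsetneq\{0\}$. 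Moreover the spaces $F(z+w,\lambda)$ lie in direct sum inside $F(z+w)$, while the subspaces $D(w,\lambda)\subseteq F(z+w,\lambda)$ already sum to $P=F(z+w)$; comparing, $F(z+w,\lambda)=D(w,\lambda)$ for every $\lambda$.

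Finally I would compute $D(z+w)$. If $x\in D(z+w,\lambda)$ then $x\in F(z+w,\lambda)=D(w,\lambda)$ and $0=(ad_{z+w}-\lambda)x=ad_zx$, so $D(z+w,\lambda)=C(z)\cap D(w,\lambda)$; the reverse containment is clear. Summing over $\lambda$, and using that the $ad_w$-eigenspace decomposition of $P$ restricts to one of $C(z)=\ker ad_z$ (again since $ad_z$ preserves each $D(w,\lambda)$), I get $D(z+w)=\bigoplus_{\lambda}\big(C(z)\cap D(w,\lambda)\big)=C(z)$. As $C(z)\subsetneq P$, this shows $D(z+w)=C(z)\subsetneq P=F(z+w)$ with $Ev(z+w)\supsetneq\{0\}$, which is exactly the definition of $z+w\in\Omega_3$.

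No single step here is deep; the content lies entirely in organizing commuting-operator facts into the Jordan-decomposition picture. The point requiring care — the main (mild) obstacle — is the bookkeeping with the possibly infinite direct-sum decompositions: one must check that "locally nilpotent" (which is all $\Omega_1$ provides, not genuine nilpotence) suffices throughout, that the $ad_w$-eigenspace splitting of $P$ descends both to $C(z)$ and to each torsion component $F(z+w,\lambda)$, and that $F(z+w)=\bigoplus_\lambda F(z+w,\lambda)=P$ together with $\bigoplus_\lambda D(w,\lambda)=P$ forces the termwise equalities $F(z+w,\lambda)=D(w,\lambda)$.
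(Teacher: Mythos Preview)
Your proof is correct and follows essentially the same Jordan-decomposition route as the paper: both show $F(z+w)=P$, then identify $F(z+w,\lambda)=D(w,\lambda)$ via the direct-sum comparison, and finally verify $D(z+w)\subsetneq F(z+w)$. The only difference is in that last step: the paper argues by contradiction (if $D(z+w,\lambda)=D(w,\lambda)$ for all $\lambda$ then $ad_z=0$ and $z\in Z(P)$), whereas you compute $D(z+w,\lambda)=C(z)\cap D(w,\lambda)$ directly and sum to the sharper conclusion $D(z+w)=C(z)$, which is a small bonus.
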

	\begin{proof}
		We have $P = F(z)\cap F(w) \subseteq F(z + w)\subseteq P$, so equality holds. It is known that $\{z + w,w\} = 0$. For any $\lambda \in Ev(w),v\in D(w,\lambda)$, there exists a sufficiently large integer $n$ such that
		$$(ad_{z + w} - \lambda)^n(v) = (ad_{z + w}-ad_{w})^n(v) = ad_{z}^n(v) = 0.$$
		Thus $Ev(w)\subseteq Ev(z + w),D(w,\lambda)\subseteq F(z + w,\lambda)$. Since
		$$P = \bigoplus_{\lambda \in Ev(w)}D(w,\lambda) = \bigoplus_{\mu\in Ev(z + w)}F(z + w, \mu),$$
		we conclude that $Ev(w) = Ev(z + w),D(w,\lambda)= F(z + w,\lambda)$. 
		
		If for any $ \lambda \in Ev(w), D(w,\lambda) = D(z + w,\lambda)$, then $z = (z + w) - w\in Z(P)$, which is a contradiction. Therefore, there exists $\lambda_0\in Ev(z+w)$ such that $D(z + w,\lambda_0)\subsetneq F(z + w,\lambda_0)$. Hence $D(z + w)\subsetneq F(z + w) = P, z + w\in \Omega_3$.
	\end{proof}
	On the contrary, we can consider how to decompose the elements in $\Omega_3$.
	\begin{prop}
		Let $P$ be a non-commutative Poisson algebra of Class 1 or 2. Assume $ad\ P = Der_K(P)$ and $z\in \Omega_3$. Then there exist $z_1\in \Omega_1, z_2\in \Omega_2,$ such that $\{z_1, z_2\} = 0$ and $z = z_1 + z_2$. If there also exist $z_1'\in \Omega_1, z_2'\in \Omega_2,$ such that $\{z_1', z_2'\} = 0$ and $z = z_1' + z_2'$, then $z_1- z_1' = z_2-z_2'\in Z(P)$.
	\end{prop}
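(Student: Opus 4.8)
The plan is to build the Jordan--Chevalley decomposition of the inner derivation $ad_z$ acting on the vector space $P$ and then transport it to elements of $P$ using the hypothesis $ad\ P = Der_K(P)$. Since $z\in\Omega_3$ we have $F(z)=P$, so $ad_z$ is locally finite and $P=\bigoplus_{\lambda\in Ev(z)}F(z,\lambda)$. Define $S\in End_K(P)$ to act as multiplication by $\lambda$ on $F(z,\lambda)$ and put $N=ad_z-S$; then on each $F(z,\lambda)$ one has $N=ad_z-\lambda$, which is locally nilpotent, $S$ is semisimple, and $SN=NS$ because both operators preserve every $F(z,\lambda)$, on which $S$ is scalar.

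The first substantive step is to show $S,N\in Der_K(P)$, for both the product and the bracket. The point is that the generalized eigenspaces of $ad_z$ are graded by $Ev(z)$: the Leibniz rules for $ad_z$ give $(ad_z-(\lambda+\mu))^n(xy)=\sum_k\binom{n}{k}(ad_z-\lambda)^k(x)\,(ad_z-\mu)^{n-k}(y)$ and the analogous identity for $\{\cdot,\cdot\}$, whence $F(z,\lambda)\cdot F(z,\mu)\subseteq F(z,\lambda+\mu)$ and $\{F(z,\lambda),F(z,\mu)\}\subseteq F(z,\lambda+\mu)$; since $S$ multiplies the $F(z,\lambda)$-component by $\lambda$ and $\lambda+\mu=\lambda+\mu$, it follows immediately that $S$ (hence $N=ad_z-S$) obeys both Leibniz rules. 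By $ad\ P=Der_K(P)$ we may pick $z_1,z_2'\in P$ with $ad_{z_1}=N$ and $ad_{z_2'}=S$; since $ad_{z_1+z_2'}=ad_z$ and $\ker ad=Z(P,\{,\})$, the element $c=z-z_1-z_2'$ is central, so replacing $z_2'$ by $z_2=z_2'+c$ gives $z=z_1+z_2$ with $ad_{z_2}=S$ unchanged. Because $z\in\Omega_3$ forces $D(z)\subsetneq P$, the derivation $ad_z$ is not semisimple, so $N\neq0$; hence $ad_{z_1}$ is locally nilpotent and nonzero, i.e. $z_1\in\Omega_1$, while $ad_{z_2}$ is semisimple with $Ev(z_2)=Ev(z)\supsetneq\{0\}$, i.e. $z_2\in\Omega_2$. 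Finally $[ad_{z_1},ad_{z_2}]=[N,S]=0$ gives $\{z_1,z_2\}\in Z(P,\{,\})\subseteq C(z_2)$, so $ad_{z_2}(z_1)$ lies in $\ker ad_{z_2}\cap\mathrm{im}\,ad_{z_2}$; this intersection is $0$ because $ad_{z_2}$ is semisimple (its image is the sum of the nonzero eigenspaces), and therefore $\{z_1,z_2\}=0$.

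For the uniqueness clause, suppose $z=z_1'+z_2'$ with $z_1'\in\Omega_1$, $z_2'\in\Omega_2$, $\{z_1',z_2'\}=0$. Then $ad_{z_1'}$ is locally nilpotent, $ad_{z_2'}$ is semisimple, they commute, and $ad_{z_1'}+ad_{z_2'}=ad_z$; by the uniqueness of the Jordan--Chevalley decomposition of the locally finite operator $ad_z$ (checked by restricting to the generalized eigenspaces of $ad_z$ and using that the semisimple part of a locally nilpotent operator vanishes) one gets $ad_{z_1'}=N=ad_{z_1}$ and $ad_{z_2'}=S=ad_{z_2}$. Hence $z_1-z_1'\in\ker ad=Z(P,\{,\})$, which equals $Z(P)$ for Poisson algebras of Class 1 or 2, and $z_1-z_1'=z_2'-z_2$ is immediate from $z_1+z_2=z_1'+z_2'$.

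I expect the main obstacle to be the infinite-dimensional bookkeeping rather than any single deep idea: one must set up the Jordan--Chevalley decomposition for a locally finite (not merely finite-dimensional) operator, verify carefully via the eigenspace-grading identities above that its semisimple and nilpotent parts are genuine derivations of $P$, and prove the corresponding uniqueness statement; granted these, the rest reduces to the computations sketched above together with the standing identity $Z(P,\{,\})=Z(P)$ in Classes 1 and 2.
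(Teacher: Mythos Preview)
Your proof is correct and follows essentially the same approach as the paper: build the semisimple/nilpotent decomposition of $ad_z$ on the generalized eigenspace grading of $P$, check that both parts are derivations, and lift via $ad\ P=Der_K(P)$. Your write-up is in fact more careful than the paper's in two places---you explicitly verify that $S$ is a derivation via the grading $F(z,\lambda)\cdot F(z,\mu)\subseteq F(z,\lambda+\mu)$, and you give a clean argument for $\{z_1,z_2\}=0$ using $\ker S\cap\operatorname{im}S=0$, whereas the paper simply asserts both facts.
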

	\begin{proof}
		It is known that $P = F(z) = \bigoplus_{\lambda\in Ev(z)}F(z,\lambda)$. We can define an inner derivation $d_2$ on $P$ as following: $$d_2(v) = \lambda v,\quad v\in F(z,\lambda),\lambda\in Ev(z).$$ Clearly, $d_2\in Der_KP,d_1 = ad_z-d_2\in Der_KP,$ $\{d_1,d_2\}=0$, and $d_1$ is strictly nilpotent, $d_2$ is strictly semisimple. Since $ad\ P = Der_K(P)$, there exist $z_1, z_2$ such that $ad_{z_1} = d_1, ad_{z_2} = d_2$, with  $z_1\in \Omega_1,z_2\in \Omega_2,ad_z = ad_{z_1 + z_2}$. Therefore $z-z_1-z_2=c\in Z(P)$. Then $z_1 +c\in \Omega_1,\{z_1 +c, z_2\}=0,z = (z_1 +c) + z_2$, which satisfies the conditions. 
		
		The proof of the last statement is similar to the proof of Proposition \ref{proposition2} and we omit it.

	\end{proof}
	It is known that if  $A$ is a non-commutative domain (viewed as a non-commutative Poisson algebra of Class 2), and its $GK$ dimension $ GK(A) < 3$, then for any $w \in A$ either $F(w) = N(w)$ or $F(w) = D(w)$. In this case, $\Omega_3 = \Omega_3'= \emptyset $ {\cite{b}}.

	\section{How the type of an element  changes under a Poisson homomorphism}
	\setcounter{equation}{0}\setcounter{theorem}{0}
	
	Let  $P$ be a non-commutative Poisson algebra over $K$.
	For $z\in P$,  $\la \in Ev(z)$ and $k\ge0$, let $F^k(z,\la)=\ker (ad_z-\la )^{k+1}.$ One has
	\[ F(z,\la)=\bigcup_{k\geq 0}F^k(z, \la),\quad F(z)=\bigoplus_{\la\in Ev(z)} F(z,\la). \]
	Then another three sets can be presented as following,
	\[C(z)=F^0(z,0),\quad N(z)=F(z,0),\quad  D(z)=\bigoplus_{\la\in Ev(z)} D(z,\la),  \]
	where $D(z,\la)= F^0(z,\la)$. 
	By the Jacobi identity, $ad_z\{u,w\} = \{u,ad_zw\}+\{ad_zu, w\}, u,w\in P$. For $n\geq 1$, we have
	$$ad_z^n\{u,w\} = \sum_{i = 1}^n\binom{n}{i}\{ad_z^iu,ad_z^{n -i}w\}.$$
	Thus if $u, w\in F(z)$, then $\{u, w\}\in F(z)$. Similarly, $uw\in F(z)$, so $F(z)$ is a non-commutative Poisson subalgebra of $P$. $N(z), D(z), C(z)$ are also non-commutative  Poisson subalgebras of $P$, which can be proved in the same way. 
	
	Let $m:P\ot P\rt P, a\otimes b\mapsto ab$ be the multiplication. For
	\(\phi\in Der_K(P), \lambda,\mu\in K,\) one has \[ (\phi-\lambda-\mu)\cdot m=m\cdot[(\phi-\lambda)\otimes 1+1\otimes(\phi-\mu)],\] it follows that for $a,b\in P$,
	\be \left(\phi-\lambda-\mu\right)^{n}\left(ab\right)= \sum_{i=0} ^{n}  \binom{n}{i} \left(\phi-\lambda\right)^{i}\left(a\right)  \cdot \left(\phi-\mu\right)^{n-i}\left(b\right) .\label{eq0}
	\ee
	Let $F^{-1}(z,\la)=\{0\}$. By the abvoe, one has 
	\ble\label{lemma4}
	Let $z\in P,\la,\mu \in Ev(z,P),k,l\ge 0$, then $m$ induces
	\be \lb{32}F^k(z,\la) \times F^l(z,\mu)\rt F^{k+l}(z,\la+\mu), \ee and
	\be\lb{33} [F^k(z,\la)\stm F^{k-1}(z,\la)] \times [F^l(z,\mu)\stm F^{l-1}(z,\mu)]\rt F^{k+l}(z,\la+\mu)\stm F^{k+l-1}(z,\la+\mu). \ee
	From the above, one has
	\begin{align} \lb{34} [F^k(z,\la)/F^{k-1}(z,\la)] &\times [F^l(z,\mu)/ F^{l-1}(z,\mu)]\rt F^{k+l}(z,\la+\mu)/F^{k+l-1}(z,\la+\mu),\\
		(a+F^{k-1}(z,\la)&, b+F^{l-1}(z,\mu))\mapsto ab+F^{k+l-1}(z,\la+\mu).\notag \end{align}
	
	Let $\mu = \la$, we have
	\[ (F^1(z,\la))^n\subseteq F^n(z,n\la),\quad[F^1(z,\la)\stm F^0(z,\la)]^n\subseteq F^n(z,n\la)\stm F^{n-1}(z,n\la). \]
	
	\ele
	\bp
	For $u\in F^k(z,\la),w\in F^l(z,\mu)$, by equation (\ref{eq0}) one has $(ad_z - \la -\mu)^{k + l + 1}(uw)= 0$, thus $uw \in F^{k + l}(z, \la + \mu)$ and equation (\ref{32}) holds. It follows from (\ref{32}) that (\ref{34}) is well-defined.
	
	If  $u\in F^k(z,\la)\stm F^{k-1}(z,\la),w\in F^l(z,\mu)\stm F^{l-1}(z,\mu),$ then
	$(ad_z-\la)^k(u)\neq 0,(ad_z-\mu)^l(w)$
	
	\noindent$\neq 0.$
	By equation (\ref{eq0}), one has 
	\[(ad_z - \la -\mu)^{k + l} = \binom{k + l}{k}(ad_z - \la)^k(u)(ad_z-\mu)^l(w)\neq 0.\]
	Thus $uw \in F^{k+l}(z,\la+\mu)\stm F^{k+l-1}(z,\la+\mu).$ Thus (\ref{33}) holds. The last statement follows from (\ref{32}) and (\ref{33}).
	\ep
	For $i\geq 0$, let $F^i(z) = \oplus_{\lambda\in Ev(z)}F^i(z, \lambda).$ Then by Lemma \ref{lemma4}, $\{F^i(z)\}_{i\geq 0}$ is a filtration of $(F(z),\ \cdot)$.  
	Similar discussions for associative algebras can be found in Section 6 of \cite{d} and Section 2  of \cite{b}. 
	
	\begin{defi}
		Let $A$ be an associative algebra, and $B$ be a subalgebra of $A$, $ B[X]=\{\sum_{i=0}^n b_i X^i\mid b_i\in B, n\ge0\}$. Let $z\in A$. If for any $ f(X)\in B[X]\stm \{0\}$, $ f(z) \neq 0$, we say $z$ is right algebraically independent over $B$; otherwise $z$ is right algebraically dependent over $B$. Analogously one can define left algebraic independence (resp. left algebraic dependence) over $B$.
	\end{defi}
	In this section,  algebraic independence will refer to right algebraic independence.
	
	\begin{prop}\lb{p23}
		Let $A$ be a domain and $GK(A) < \infty$. Let $B$ be a subalgebra of $A$. If $\exists z\in A$, such that $z$ is (right) algebraically independent over $B$, then $GK(A) \geq GK(B) + 1$. 
	\end{prop}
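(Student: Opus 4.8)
The plan is to argue directly from the definition of Gelfand--Kirillov dimension via the growth of finite-dimensional subspaces. Recall two standard facts: $GK(B)=\sup_{W}\limsup_{n\to\infty}\frac{\log\dim_K W^n}{\log n}$, the supremum taken over all finite-dimensional subspaces $W\subseteq B$ with $1\in W$; and for any finite-dimensional subspace $U\subseteq A$ with $1\in U$ one has $\limsup_{n\to\infty}\frac{\log\dim_K U^n}{\log n}\le GK(A)$, since this $\limsup$ is just the GK dimension of the subalgebra of $A$ generated by $U$. So it is enough to fix one such $W$ and show $1+\limsup_n\frac{\log\dim_K W^n}{\log n}\le GK(A)$, and then take the supremum over $W$.

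First I would set $U=W+Kz$, which is finite-dimensional and contains $1$. Since $1\in W$, for every $0\le j\le n$ we have $W^n\subseteq U^n$ and $z^j\in U^j\subseteq U^n$, hence $W^nz^j\subseteq U^{2n}$ and therefore $\sum_{j=0}^{n}W^nz^j\subseteq U^{2n}$. Next, the hypothesis that $z$ is right algebraically independent over $B$ says precisely that the $K$-linear map $B[X]\to A$ sending $\sum b_iX^i$ to $\sum b_iz^i$ is injective; in particular $\sum_{j\ge 0}Bz^j=\bigoplus_{j\ge 0}Bz^j$, so $\sum_{j=0}^{n}W^nz^j=\bigoplus_{j=0}^{n}W^nz^j$ as well. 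Finally $z\neq 0$ (otherwise $X\in B[X]$ would map to $0$), so in the domain $A$ right multiplication by $z^j$ is injective, whence $\dim_K(W^nz^j)=\dim_K W^n$ for each $j$. Combining the three observations gives $\dim_K U^{2n}\ \ge\ \sum_{j=0}^{n}\dim_K(W^nz^j)\ =\ (n+1)\dim_K W^n$.

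To conclude I would take logarithms and divide by $\log(2n)$, obtaining $\frac{\log\dim_K U^{2n}}{\log(2n)}\ge\frac{\log(n+1)}{\log(2n)}+\frac{\log\dim_K W^n}{\log(2n)}$. As $n\to\infty$ the first term on the right tends to $1$; since $\log(2n)\sim\log n$, the second term has the same $\limsup$ as $\frac{\log\dim_K W^n}{\log n}$; and because $\limsup(a_n+b_n)=\lim a_n+\limsup b_n$ whenever $a_n$ converges, the right-hand side has $\limsup$ equal to $1+\limsup_n\frac{\log\dim_K W^n}{\log n}$. The left-hand side has $\limsup$ at most $\limsup_m\frac{\log\dim_K U^m}{\log m}\le GK(A)$, which is finite by hypothesis. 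Hence $1+\limsup_n\frac{\log\dim_K W^n}{\log n}\le GK(A)$, and taking the supremum over $W$ yields $GK(B)+1\le GK(A)$.

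The one step that requires genuine care — and where both hypotheses are actually consumed — is the inequality $\dim_K U^{2n}\ge(n+1)\dim_K W^n$, rather than merely $\ge\dim_K W^n$: right algebraic independence provides the directness of $\bigoplus_{j}Bz^j$, and the domain assumption ensures that right multiplication by each power of $z$ preserves the dimension of $W^n$. Everything else is routine bookkeeping with $\limsup$'s and the elementary comparison properties of $GK$, using nothing beyond the standard facts recalled at the outset; this is in the spirit of the discussions in Section 6 of \cite{d} and Section 2 of \cite{b}.
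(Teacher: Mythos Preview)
Your proof is correct and follows essentially the same approach as the paper's: choose a finite-dimensional $W\subseteq B$ with $1\in W$, set $U=W+Kz$, use right algebraic independence together with the domain hypothesis to get $\dim_K U^{2n}\ge (n+1)\dim_K W^n$, and then pass to $\limsup$'s and take the supremum over $W$. Your write-up is in fact slightly more careful than the paper's in justifying the directness of $\bigoplus_j W^n z^j$ and the $\limsup$ manipulation, but the underlying argument is identical.
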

	\bp
	First, notice that $\sum_{k = 0}^{\infty} B \cdot z^k$ is a direct sum as $z$ is algebraically independent over $B$.
	Let $W$ be a subspace of $B$. Assume that $1\in W$ and $dim\ W < \infty$.  Let $V:= W \bigoplus K z$,
	then $$V^{2n} = (W\bigoplus Kz)^{2n} \supseteq W^n\bigoplus W^nz \bigoplus \cdots \bigoplus W^nz^n.$$ As  $A$ is a domain, $dim\ W^nz^i = dim\ W^n$ for $i =1,\cdots,n$, hence $dim\ V^{2n} \geq n\cdot dim\ W^n$. One has
	\[\frac{log\ dim\ V^{2n}}{log\ 2n} \geq \frac{log\ (n\cdot dim\ W^n)}{log\ 2n} = \frac{log\ n + log\ dim\ W^n}{log\  2 + log\ n},\]
	\[\varlimsup_{n \to \infty}\frac{log\ dim\ V^{2n}}{log\ 2n}\geq \varlimsup_{n\to \infty}\frac{log\ n + log\ dim\ W^n}{log\ 2 + log\ n} = 1 +  \varlimsup_{n\to \infty}\frac{ log\ dim\ W^n}{ log\ n}.\]
	Taking the supremum over all finite dimensional subspace $W$ of $B$,
	one has  $GK(A) \geq GK(B) + 1$.
	\ep
	We have the following corollary.
	\begin{coro}
		Let $B$ be a subalgebra of the domain $A$ with $GK(A) < GK(B) + 1< \infty$, then  $z$ is (right) algebraically dependent over $B$ for all $ z\in A$.
	\end{coro}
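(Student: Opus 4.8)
The plan is to deduce this immediately from Proposition \ref{p23} by contraposition. First I would check that the hypotheses of Proposition \ref{p23} are available: the assumption $GK(B)+1<\infty$ gives $GK(B)<\infty$, and then $GK(A)<GK(B)+1<\infty$, so $A$ is a domain of finite GK dimension and $B$ is a subalgebra of it, exactly as required.

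Next, I would argue by contradiction. Suppose there were some $z\in A$ that is right algebraically independent over $B$. Then Proposition \ref{p23} applies verbatim and yields $GK(A)\geq GK(B)+1$, contradicting the standing hypothesis $GK(A)<GK(B)+1$. Hence no such $z$ exists, i.e.\ every $z\in A$ is right algebraically dependent over $B$, which is the assertion.

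There is essentially no obstacle here: the statement is the contrapositive of Proposition \ref{p23}, so the only thing to be careful about is that the finiteness and domain hypotheses needed to invoke that proposition are genuinely implied by the finiteness condition $GK(B)+1<\infty$ together with the inequality $GK(A)<GK(B)+1$, which I verified in the first step. (If one wished to phrase it without contradiction, one can simply say: for each fixed $z\in A$, Proposition \ref{p23} shows that ``$z$ algebraically independent over $B$'' implies $GK(A)\geq GK(B)+1$; since the latter fails, the former must fail, so $z$ is algebraically dependent over $B$.)
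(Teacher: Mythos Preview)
Your proposal is correct and matches the paper's approach: the paper states this result immediately after Proposition~\ref{p23} as a direct corollary with no separate proof, and your contrapositive argument is exactly the intended deduction.
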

	
	From  now until the end of this section, \textbf{$P$ is assumed to be a non-commutative Poisson algebra and  $(P,\ \cdot)$ is a domain.}
	\ble\lb{25}
	Let $ z\in P$. If $w \in D(z,\lambda)\backslash \{0\}$ for some $\lambda \in Ev(z)\backslash \{0\}$, then $w$ is (right) algebraically independent over $N(z)$.
	\ele
	\bp
	Assume $f(X) = a_nX^n + a_{n-1}X^{n-1} + \cdots + a_0$, $a_i \in F(z,0)$, $ n \geq 1, a_n \neq 0$, 
	then $f(w) = a_nw^n + a_{n-1}w^{n-1} + \cdots + a_0$. For $i = 0, 1, \cdots , n$, $a_iw^i\in F(z,i\lambda)$,
	thus they are linearly independent. If $f(w) = 0$, then $a_iw^i=0$ for all $i$. As $(P,\ \cdot)$ is a domain, $a_i = 0$ for all $i$. In particular, $a_n = 0$,  which contradicts our assumption.
	\ep
	
	\ble\lb{26}
	Let  $ z\in P$. Assume  $w\in F(z,\lambda)\backslash D(z,\lambda)$ for some $\lambda \in Ev(z)$, then $w$ is  (right) algebraically independent over $D(z)$.
	\ele
	\bp
	Assume $f(X) = a_nX^n + a_{n-1}X^{n-1} + \cdots + a_0\in D(z)[X]\backslash\{0\}$, $n \geq 1, a_n \neq 0$. Let $w\in F^k(z,\lambda)\backslash F^{k-1}(z,\lambda), k\geq 1$, then 
	$f(w) = a_nw^n + a_{n-1}w^{n-1} + \cdots + a_0.$
	As $a_nw^n\in F^{nk}(z)\backslash  F^{nk-1}(z)$, $a_jw^j\in F^{jk}(z)$ for $j<n$ and $\{F^i(z)\}_{i\geq 0}$ is a filtration of $F(z)$, $f(w)\ne0$. So  $w$ is  (right) algebraically independent over $D(z)$.

	\ep

	\begin{prop}
		Assume $x,y \in P$ and  $C(x) = C(y)$. Let $N^k(x) = F^k(x, 0).$ Similar for $N^k(y)$. Then  $  N^k(x) = N^k(y), \forall k \geq 0$ and $N(x) = N(y)$.
	\end{prop}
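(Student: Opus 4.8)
The plan is to first unwind the hypothesis $C(x)=C(y)$ into a statement purely about the operators $ad_x$ and $ad_y$. Since $\{x,x\}=0$ we have $x\in C(x)=C(y)$, and since $\{y,y\}=0$ we have $y\in C(y)=C(x)$; in particular $\{x,y\}=0$. Because $ad\colon P\to Der_K(P)$ is a Lie algebra homomorphism, $ad_{\{x,y\}}=[ad_x,ad_y]$, so $ad_xad_y=ad_yad_x$. Thus the whole input we will use is that $ad_x$ and $ad_y$ are \emph{commuting} operators on the vector space $P$ satisfying $\ker ad_x=\ker ad_y=C(x)=C(y)$.

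Next I would show $N^k(x)=N^k(y)$ for all $k\ge0$ by induction on $k$, where $N^k(z)=F^k(z,0)=\ker(ad_z)^{k+1}$. The base case $k=0$ is exactly $N^0(x)=C(x)=C(y)=N^0(y)$. For the inductive step, suppose $N^{k-1}(x)=N^{k-1}(y)$ with $k\ge1$, and take $w\in N^k(x)$. Then $(ad_x)^k\big(ad_x(w)\big)=(ad_x)^{k+1}(w)=0$, so $ad_x(w)\in N^{k-1}(x)=N^{k-1}(y)$, i.e. $(ad_y)^k\big(ad_x(w)\big)=0$. Using $ad_xad_y=ad_yad_x$ this rewrites as $ad_x\big((ad_y)^k(w)\big)=0$, so $(ad_y)^k(w)\in\ker ad_x=C(x)=C(y)=\ker ad_y$, whence $(ad_y)^{k+1}(w)=0$ and $w\in N^k(y)$. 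This proves $N^k(x)\subseteq N^k(y)$, and interchanging the roles of $x$ and $y$ gives the reverse inclusion, so $N^k(x)=N^k(y)$.

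Finally, since $N(z)=F(z,0)=\bigcup_{k\ge0}F^k(z,0)=\bigcup_{k\ge0}N^k(z)$, taking the union over $k$ of the equalities $N^k(x)=N^k(y)$ yields $N(x)=N(y)$.

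The only genuine step is the first one: realizing that $C(x)=C(y)$ --- a priori a statement only about the $1$-kernels of $ad_x$ and $ad_y$ --- already forces $\{x,y\}=0$, and hence the commutation $ad_xad_y=ad_yad_x$ that drives the rest. After that the argument is a routine induction using nothing beyond the inclusion $ad_x\big(N^k(x)\big)\subseteq N^{k-1}(x)$; in particular, neither the domain assumption on $(P,\ \cdot)$ nor the restriction to Class $1$ or $2$ is actually needed for this proposition.
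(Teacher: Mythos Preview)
Your proof is correct and follows essentially the same approach as the paper: both observe that $C(x)=C(y)$ forces $\{x,y\}=0$ (hence $ad_x$ and $ad_y$ commute), then induct on $k$ by pushing $ad_x(w)\in N^{k-1}(x)=N^{k-1}(y)$ through the commutation to land $(ad_y)^k(w)$ in $\ker ad_x=\ker ad_y$. Your write-up is in fact slightly more explicit than the paper's, which leaves the base case and the reverse inclusion implicit; your closing remark that neither the domain hypothesis nor the Class~1/2 restriction is used is also correct.
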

	\bp
	We will prove it by induction. Assume $k>0$ and $N^i(x) = N^i(y), 0\leq i \leq k-1$. Then we have the following commutative diagram:
	
	\[\begin{tikzcd}
		\cdots  \arrow[r,"ad_x"]& N^k(x)  \arrow[r,"ad_x"] & N^{k-1}(x) \arrow[r,"ad_x"] \arrow[d,equal] & \cdots  \arrow[r,"ad_x"] & N^0(x) \arrow[r,"ad_x"] \arrow[d,equal] & 0\\
		\cdots  \arrow[r,"ad_y"]& N^k(y)  \arrow[r,"ad_y"] & N^{k-1}(y) \arrow[r,"ad_y"]  & \cdots  \arrow[r,"ad_y"] & N^0(y) \arrow[r,"ad_y"]  & 0
	\end{tikzcd}\]
	
	For $z \in N^k(x)$, $ad_x(z) \in N^{k-1}(y) $, thus 
	$ ad_y^k(ad_x(z)) = 0$. By assumption  $C(x) = C(y)$, one has $ \{x,y\} = 0$ and $\{ad_x,ad_y\} = ad_{\{x,y\}} =0$.
	Hence $ad_y^k(z)\in N^0(x) = N^0(y)$, thus  $ad_y^{k+1}(z)=0$ and $z\in N^k(y)$. 
	\ep

	Let $P,P_1$ be non-commutative Poisson algebras and let \( \varphi: P \to P_1 \) be a Poisson homomorphism.  Let \( w \in P \). The following inclusions obviously hold:
	\[
	\varphi(F(w)) \subseteq F(\varphi(w)), \quad \varphi(N(w)) \subseteq N(\varphi(w)),
	\]
	\[
	\varphi(D(w)) \subseteq D(\varphi(w)), \quad \varphi(C(w)) \subseteq C(\varphi(w)).
	\]
	If \( \varphi \) is an isomorphism, then equality holds in all these inclusions, and the type of the element $w$ does not change after applying $\varphi$. If \( \varphi \) is not an isomorphism, then the type of $w$ may change  after applying $\varphi$.
	
	\begin{theorem}\label{th39}
		Let $P,P_1 $ be non-commutative Poisson algebras. Assume $(P,\  \cdot), (P_1,\ \cdot)$ both are domains.	If $\phi: P\to P_1 $ is an injective homomorphism and  $GK(P_1 ) < GK(P) + 1< \infty$,  then we have the following:
		
		(1) $\phi(\Omega_0(P)) \subseteq \Omega_0(P_1 ) \bigcup \Omega_0'(P_1 )$.
		
		(2)   $\phi(\Omega_1(P)) \subseteq \Omega_1(P_1 ) \bigcup \Omega_1'(P_1 )$.   
		
		(3)  $\phi(\Omega_2(P)) \subseteq \Omega_2(P_1 ) \bigcup \Omega_2'(P_1 )$.
		
		(4)  $\phi(\Omega_3(P)) \subseteq \Omega_3(P_1 ) \bigcup \Omega_3'(P_1 )$.
		
		(5)   $\phi(\Omega_3'(P)) \subseteq \Omega_3'(P_1 )$.
		
		(6)    $\phi(\Omega'(P)) \subseteq \Omega'(P_1 )$, i.e. $\phi(\Omega'(P))\bigcap (\Omega(P_1 )) = \emptyset$.

		Assume that $z\in P$ satisfies $GK(F(z,P)) = GK(P)$ in addition. Then
		
		(7)  If $z\in \Omega_1'(P)$, then $\phi(z) \in \Omega_1'(P_1 )$.
		
		(8)  If $z\in \Omega_2'(P)$, then $\phi(z) \in \Omega_2'(P_1 )$.

	\end{theorem}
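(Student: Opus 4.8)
The plan is to start with a \emph{transfer lemma}. Since $\phi$ is an injective homomorphism of Lie algebras, $ad_{\phi(z)}^n(\phi(x))=\phi(ad_z^n(x))$ for every $n\ge 0$ and $x\in P$, so $\phi$ restricts to a linear isomorphism of $K[ad_z](x)$ onto $K[ad_{\phi(z)}](\phi(x))$ and $\phi\bigl((ad_z-\lambda)^n(x)\bigr)=(ad_{\phi(z)}-\lambda)^n(\phi(x))$ for all $\lambda\in K$. Together with the injectivity of $\phi$ this gives, for all $x\in P$ and $\lambda\in K$: $x\in F(z,P)\Leftrightarrow\phi(x)\in F(\phi(z),P_1)$, $x\in F(z,\lambda)\Leftrightarrow\phi(x)\in F(\phi(z),\lambda)$, $x\in D(z,\lambda)\Leftrightarrow\phi(x)\in D(\phi(z),\lambda)$, $x\in N(z,P)\Leftrightarrow\phi(x)\in N(\phi(z),P_1)$, and $x\in C(z,P)\Leftrightarrow\phi(x)\in C(\phi(z),P_1)$. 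In particular $Ev(z,P)\subseteq Ev(\phi(z),P_1)$, and $F(z,P)\subsetneq P$ forces $F(\phi(z),P_1)\subsetneq P_1$; the latter is already (6), since $z\in\Omega'(P)$ means exactly $F(z,P)\subsetneq P$, and it will also deliver (5) once (4) is in hand.

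Next I would set up the ``$GK$ engine''. As $\phi$ is injective, $GK(\phi(P))=GK(P)$, and every subalgebra $B$ of the domain $P_1$ satisfies $GK(B)\le GK(P_1)<GK(P)+1$. Hence, by Proposition \ref{p23}, no element of $P_1$ is right algebraically independent over a subalgebra $B$ of $P_1$ with $GK(B)\ge GK(P)$. Feeding this into Lemma \ref{25} and Lemma \ref{26} — recall $N(\phi(z))$ and $D(\phi(z))$ are subalgebras of $P_1$ — gives the two facts that drive everything: if $GK(N(\phi(z)))\ge GK(P)$ then $Ev(\phi(z))=\{0\}$, and if $GK(D(\phi(z)))\ge GK(P)$ then $D(\phi(z))=F(\phi(z))$.

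Now the case analysis for (1)--(5). If $z\in\Omega_0(P)=Z(P)$, then $\phi(P)=\phi(C(z))\subseteq C(\phi(z))\subseteq N(\phi(z))\cap D(\phi(z))$, so both engine facts apply and $C(\phi(z))=D(\phi(z))=F(\phi(z))=N(\phi(z))$, i.e. $\phi(z)\in\Omega_0(P_1)\cup\Omega_0'(P_1)$. If $z\in\Omega_1(P)$, then $N(z)=P$ gives $\phi(P)\subseteq N(\phi(z))$, so $Ev(\phi(z))=\{0\}$ by the engine, while transferring some $y\in N(z)\setminus C(z)$ gives $\phi(y)\in N(\phi(z))\setminus C(\phi(z))$; hence $\phi(z)\in\Omega_1(P_1)\cup\Omega_1'(P_1)$. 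If $z\in\Omega_2(P)$, then $D(z)=P$ gives $\phi(P)\subseteq D(\phi(z))$, so $D(\phi(z))=F(\phi(z))$ by the engine, while transferring a nonzero element of $Ev(z)$ gives $Ev(\phi(z))\supsetneq\{0\}$; hence $\phi(z)\in\Omega_2(P_1)\cup\Omega_2'(P_1)$. If $z\in\Omega_3(P)$ (or $z\in\Omega_3'(P)$), transferring a nonzero element of $Ev(z)$ gives $Ev(\phi(z))\supsetneq\{0\}$, and expanding a witness of $D(z)\subsetneq F(z)$ in its $ad_z$-weight components yields some $y\in F(z,\mu)\setminus D(z,\mu)$, which transfers to $\phi(y)\in F(\phi(z),\mu)\setminus D(\phi(z),\mu)$, so $D(\phi(z))\subsetneq F(\phi(z))$; thus $\phi(z)\in\Omega_3(P_1)\cup\Omega_3'(P_1)$, and if moreover $z\in\Omega_3'(P)$ then $F(z,P)\subsetneq P$ transfers to $F(\phi(z),P_1)\subsetneq P_1$, giving (5).

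For (7) and (8) the extra hypothesis $GK(F(z,P))=GK(P)$ is precisely what makes the engine fire for weak $z$. If $z\in\Omega_1'(P)$ then $N(z)=F(z,P)$ has $GK=GK(P)$, so $GK(N(\phi(z)))\ge GK(\phi(N(z)))=GK(P)$, the engine gives $Ev(\phi(z))=\{0\}$, transferring $y\in N(z)\setminus C(z)$ gives $C(\phi(z))\subsetneq N(\phi(z))$, and $N(z)\subsetneq P$ transfers to $N(\phi(z))\subseteq F(\phi(z))\subsetneq P_1$; hence $\phi(z)\in\Omega_1'(P_1)$. If $z\in\Omega_2'(P)$ then $D(z)=F(z,P)$ has $GK=GK(P)$, so $GK(D(\phi(z)))\ge GK(P)$, the engine gives $D(\phi(z))=F(\phi(z))$, transferring a nonzero element of $Ev(z)$ gives $Ev(\phi(z))\supsetneq\{0\}$, and $F(z,P)\subsetneq P$ transfers to $F(\phi(z),P_1)\subsetneq P_1$; hence $\phi(z)\in\Omega_2'(P_1)$. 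I expect the main obstacle to be the bookkeeping inside the engine: one must track exactly which associated subalgebra of $z$ maps into which associated subalgebra of $\phi(z)$, use that these are honest subalgebras of the domain $P_1$ so that Proposition \ref{p23} applies, and — decisively for (7) and (8) — invoke $GK(F(z,P))=GK(P)$ exactly when $F(z,P)\ne P$, so that $\phi(F(z,P))$ is still large enough to prevent $\phi(z)$ from acquiring a new nonzero eigenvalue (in (7)) or a torsion algebra strictly larger than its eigenvalue algebra (in (8)).
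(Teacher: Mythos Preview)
Your proposal is correct and follows essentially the same approach as the paper: a transfer lemma identifying $\phi(F(z,P))=F(\phi(z),\phi(P))$ etc., combined with the ``GK engine'' (Proposition~\ref{p23} plus Lemmas~\ref{25} and~\ref{26}) to force $Ev(\phi(z))=\{0\}$ or $D(\phi(z))=F(\phi(z))$ whenever the relevant subalgebra has $GK\ge GK(P)$. The only cosmetic difference is that in (4) and (5) you spell out the weight-component decomposition explicitly, while the paper simply passes the strict chain $C\subsetneq D\subsetneq F$ from $\phi(P)$ to $P_1$ in one line; both arguments are the same in substance.
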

	\bp
	Let $z\in P$ and $z_1 = \phi(z)$. For $B\subseteq P_1$ a subalgebra, $C(z_1,B)=\{w\in B\mid ad_z(w)=0\}$ denotes the centralizer of $z_1$ in $B$, and analogous notation will be used. As $\phi$ is an injective homomorphism, one has 
	\[\phi(C(z,P))=C(z_1,\phi(P)),\quad \phi(D(z,P))=D(z_1,\phi(P)),\]
	\[\phi(N(z,P))=N(z_1,\phi(P)),\quad \phi(F(z,P))=F(z_1,\phi(P)).\]
	
	For $B\subseteq P_1$, if $GK(B)\geq GK(P),$ then for any $ w\in P_1,$ $w$ is algebraically dependent over $B$. Otherwise by Proposition \ref{p23}, $GK(P_1)\geq GK(B) + 1 \geq GK(P) + 1$, contradict. Furthermore, if $GK(N(z_1,P_1))\geq GK(P)$, then $Ev(z_1) = \{0\}, D(z_1,P_1) = C(z_1,P_1)$ by Lemma \ref{25}, and if $GK(D(z_1,P_1))\geq GK(P)$, then $F(z_1, P_1) = D(z_1, P_1)$ by Lemma \ref{26}.
	
	(1) Assume $z\in \Omega_0(P)$. Then \[ C(z_1,P_1 ) \supseteq  C(z_1,\phi(P))=\phi(P), \] 
	Thus $GK(N(z_1,P_1))\geq GK(P), GK(D(z_1,P_1))\geq GK(P)$ and $F(z_1,P_1)= D(z_1,P_1)=C(z_1,P_1)$. 
	Thus $z_1\in \Omega_0(P_1)\cup \Omega_0'(P_1)$.

	(2) Assume $z\in \Omega_1(P)$, then $C(z,P)\subsetneq N(z,P)=P$, and 
	\[C(z_1,\phi(P))\subsetneq N(z_1,\phi(P))=\phi(P)\subseteq N(z_1,P_1).\]
	Thus $GK(N(z_1,P_1))\geq GK(P)$ and $Ev(z_1) = \{0\}$. 
	Thus 
	$ z_1\in \Omega_1(P_1 )\cup\Omega_1'(P_1 )$.

	(3) Similar to (2).

	(4) Assume $z\in \Omega_3(P)$. Then $C(z,P)\subsetneq D(z,P)\subsetneq F(z, P)=P$, and 
	\[C(z_1,\phi(P))\subsetneq D(z_1,\phi(P))\subsetneq F(z_1,\phi(P)).\]
	Thus $C(z_1,P_1)\subsetneq D(z_1,P_1)\subsetneq F(z_1,P_1)$ and $z_1\in \Omega_3(P_1)\bigcup \Omega_3'(P_1).$
	
	(5) Similar to (4).

	(6) 
	Assume $z\in \Omega'(P),$ then $F(z, P)\subsetneq P,F(z_1,\phi(P))\subsetneq \phi(P),F(z_1,P_1)\subsetneq P_1,$ thus $\varphi(\Omega'(P))\subseteq \Omega'(P_1)$, $\phi(\Omega'(P))\bigcap (\Omega(P_1 )) = \emptyset$.

	(7) Assume $z\in \Omega_1'(P)$. Then $C(z,P)\subsetneq N(z,P) = F(z, P)\subsetneq P$, and 
	\[C(z_1,\phi(P))\subsetneq N(z_1,\phi(P))\subsetneq \phi(P),\]
	thus $C(z_1,P_1)\subsetneq N(z_1,P_1)\subsetneq P_1.$ As $GK(F(z, P)) = GK(P)$, we have
	\[GK(N(z_1,P_1 ))\geq GK(N(z_1,\phi(P))) =GK(N(z, P)) = GK(F(z, P)) =GK(P).\]
	Thus  $Ev(z_1)=\{0\}$ and  $z_1\in \Omega_1'(P_1)$.
	

	(8) Similar to (7).

	\ep
	We will apply above result to \( A_1 \). As $GK(A_1) = 2$, $\Omega_3(A_1) = \Omega_3'(A_1) = \emptyset$. Let
	\[
	\Lambda = \{ z \in A_1 \mid \exists w \in A_1, [z, w] = 1 \}.
	\]	
	For any \( z \in \Lambda \), one has \( C(z) = K[z] \) by {\cite{G2012}}. Additionally, one has \( \Lambda \cap K[p] = \{ ap + b \mid a \in K^{\times}, b \in K \} \) (see {\cite{BO2024}}).
	
	\begin{prop}\label{lemma1}
		Let \( \varphi: A_1 \to A_1 \) be a homomorphism. Assume \( \text{Im} \, \varphi \subsetneq A_1 \). Then:
		
		\begin{enumerate}
			\item \( \varphi(A_1) \cap \Omega_1 = \emptyset \), and \( \varphi(\Omega_1 \cup \Omega_1') \subseteq \Omega_1' \).
			\item \( \varphi(A_1) \cap \Omega_2 = \emptyset \), and \( \varphi(\Omega_2 \cup \Omega_2') \subseteq \Omega_2' \).
			\item \( \varphi(\Omega_0) = \Omega_0 \), and \( \varphi(\Omega_0') \subseteq \Omega_1' \cup \Omega_2' \cup \Omega_0' \).
		\end{enumerate}
	\end{prop}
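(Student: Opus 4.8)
\noindent The plan is to combine Theorem~\ref{th39} with the structure theory of $A_1$ due to Dixmier~\cite{d}. First, $\varphi$ is automatically injective: $A_1$ is a simple ring, so $\ker\varphi\in\{0,A_1\}$, and $\varphi(1)=1$ rules out $\ker\varphi=A_1$. Hence $\varphi$ induces an isomorphism of $A_1$ onto $\varphi(A_1)$, so $GK(\varphi(A_1))=GK(A_1)=2$, and Theorem~\ref{th39} applies to $\varphi$ (with $P=P_1=A_1$), since $GK(A_1)=2<3<\infty$. We use freely that $\Omega_3(A_1)=\Omega_3'(A_1)=\emptyset$, that $\Omega_0(A_1)=Z(A_1)=K$, that $\Omega'(A_1)=\Omega_0'(A_1)\cup\Omega_1'(A_1)\cup\Omega_2'(A_1)$, and, for $w=\varphi(z)$, the identities $\varphi(N(z,A_1))=N(w,\varphi(A_1))=\varphi(A_1)\cap N(w,A_1)$ together with their analogues for $C,D,F$, which come from transport of structure along $A_1\cong\varphi(A_1)$ (cf. the proof of Theorem~\ref{th39}).

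\noindent The crux is to prove $\varphi(A_1)\cap\Omega_1=\emptyset$ and $\varphi(A_1)\cap\Omega_2=\emptyset$. Suppose $w=\varphi(z)\in\Omega_1(A_1)$. Then $N(w,A_1)=A_1$, so $\varphi(N(z,A_1))=\varphi(A_1)$, hence $N(z,A_1)=A_1$; as $z\notin K$ (otherwise $w=z$ would be central), $z\in\Omega_1(A_1)$. By Dixmier's description of $\Omega_1(A_1)=\Delta_1$, every element of $\Omega_1(A_1)$ is $\mathrm{Aut}(A_1)$-conjugate to a non-constant element of $K[q]$. Choose automorphisms $\sigma,\tau$ with $\sigma(z)=f(q)$ and $\tau(w)=g(q)$, $f,g\in K[q]\setminus K$, and replace $\varphi$ by $\psi:=\tau\varphi\sigma^{-1}$, which is again an injective endomorphism with $\psi(A_1)=\tau(\varphi(A_1))\subsetneq A_1$ and $\psi(f(q))=g(q)$. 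Since $A_1$ is a domain, the degree $\deg_p$ (in the standard basis $\{q^ip^j\}$) is additive on products, so $\deg_p\psi(q)\ge1$ would give $\deg_p f(\psi(q))\ge\deg f\ge1$, contradicting $\deg_p g(q)=0$; hence $\psi(q)\in K[q]$. But $[\psi(q),\psi(p)]=\psi([q,p])=1$, so $\psi(q)\in\Lambda$, and therefore $\psi(q)\in\Lambda\cap K[q]=\{aq+b\mid a\in K^{\times}\}$, the last equality following from the given description of $\Lambda\cap K[p]$ by applying the flip automorphism $q\mapsto p,\ p\mapsto -q$. Writing $\psi(q)=aq+b$ with $a\ne0$, the identity $a^{-1}=[q,\psi(p)]$ together with $[q,p^i]=ip^{i-1}$ forces $\psi(p)=a_0(q)+a_1p$ with $a_1\in K^{\times}$; then $q\in\psi(A_1)$ and so $p\in\psi(A_1)$, whence $\psi(A_1)=A_1$, a contradiction. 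The case $w=\varphi(z)\in\Omega_2(A_1)$ is parallel: one first gets $z\in\Omega_2(A_1)$ (from $D(w,A_1)=A_1$ and $Ev(z)\ne\{0\}$), then uses Dixmier's normal form for $\Omega_2(A_1)=\Delta_3$ (every element is conjugate to $\lambda qp+\mu$ with $\lambda\in K^{\times}$, $\mu\in K$) to reduce to the identity $\psi(q)\psi(p)=a\,qp+b$ with $a\in K^{\times}$; then $\deg_p\psi(q)+\deg_p\psi(p)=1$ shows one of $\psi(q),\psi(p)$ lies in $K[q]$, hence in $\Lambda\cap K[q]=\{aq+b\}$, and feeding this back into $\psi(q)\psi(p)=a\,qp+b$ and $[\psi(q),\psi(p)]=1$ forces $\psi(q),\psi(p)$ to be scalar multiples of $q$ and $p$ (possibly with $q,p$ interchanged), so again $\psi$ is onto, a contradiction.

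\noindent Granting these two statements, the remaining assertions follow formally. If $z\in\Omega_1(A_1)$, then $\varphi(z)\in\Omega_1(A_1)\cup\Omega_1'(A_1)$ by Theorem~\ref{th39}(2), and $\varphi(z)\in\varphi(A_1)$ while $\varphi(A_1)\cap\Omega_1(A_1)=\emptyset$, so $\varphi(z)\in\Omega_1'(A_1)$; similarly $\varphi(\Omega_2(A_1))\subseteq\Omega_2'(A_1)$. If $z\in\Omega_1'(A_1)\subseteq\Omega'(A_1)$, then $\varphi(z)\in\Omega'(A_1)=\Omega_0'\cup\Omega_1'\cup\Omega_2'$ by Theorem~\ref{th39}(6); but $C(z,A_1)\subsetneq N(z,A_1)$ gives $C(\varphi(z),\varphi(A_1))\subsetneq N(\varphi(z),\varphi(A_1))$, which excludes $\varphi(z)\in\Omega_0'\cup\Omega_2'$, since an element of $\Omega_0'$ and a semisimple element both satisfy $N=C$, whence $N(\varphi(z),\varphi(A_1))=C(\varphi(z),\varphi(A_1))$; thus $\varphi(z)\in\Omega_1'(A_1)$. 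If $z\in\Omega_2'(A_1)$, then again $\varphi(z)\in\Omega'(A_1)=\Omega_0'\cup\Omega_1'\cup\Omega_2'$ by Theorem~\ref{th39}(6), but $Ev(z)\ne\{0\}$ gives a nonzero eigenvector of $ad_z$ with nonzero eigenvalue, whose $\varphi$-image is a nonzero eigenvector of $ad_{\varphi(z)}$, so $Ev(\varphi(z))\ne\{0\}$ and hence $\varphi(z)\in\Omega_2'(A_1)$. Finally $\varphi(\Omega_0(A_1))=\varphi(K)=K=\Omega_0(A_1)$ because $\varphi$ is a unital $K$-algebra map, and for $z\in\Omega_0'(A_1)\subseteq\Omega'(A_1)$ Theorem~\ref{th39}(6) gives $\varphi(z)\in\Omega'(A_1)=\Omega_0'\cup\Omega_1'\cup\Omega_2'$, which is statement~3.

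\noindent The hard part is the crux: the reduction to normal forms rests on Dixmier's classification of $\Delta_1$ and $\Delta_3$, and the $\deg_p$-bookkeeping must be carried out carefully (in particular tracking the lower-order terms produced when commuting $p$ past polynomials in $q$) before one can conclude that $\psi$ is forced to be surjective.
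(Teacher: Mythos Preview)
Your proof is correct and follows the same overall strategy as the paper: invoke Theorem~\ref{th39}, reduce via Dixmier's normal forms for $\Omega_1$ and $\Omega_2$, and show that the normalized endomorphism $\psi$ would have to be surjective. There are two places where your route differs from the paper's in a way worth recording.

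For the $\Omega_2$ case, the paper argues via eigenvectors: from $\widetilde{\varphi}(c_1h+d_1)=h$ one gets $\widetilde{\varphi}(p)\in D(h,c_1)\cap\Lambda$, and then appeals to the result $\Lambda\cap\bigcup_{n\in\mathbb Z}D(h,n)=\{\lambda p,\lambda q\mid\lambda\in K^\times\}$ quoted from~\cite{BO2024}. Your argument instead exploits the additivity of $\deg_p$ on the identity $\psi(q)\psi(p)=a\,qp+b$ to force one of $\psi(q),\psi(p)$ into $\Lambda\cap K[q]$, and then a short computation (which you correctly flag as needing care) pins down both generators as scalar multiples of $q,p$. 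This is more elementary in that it avoids the external input from~\cite{BO2024}; as a side effect it even shows $a=\pm 1$, though that is not needed for the contradiction.

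For $\varphi(\Omega_i')\subseteq\Omega_i'$ ($i=1,2$), the paper applies parts~(7) and~(8) of Theorem~\ref{th39} after asserting $GK(F(w))=2$ for nilpotent or semisimple $w\in A_1$. Your direct argument---observing that $\Omega_0'$ and $\Omega_2'$ have $N=C$, hence $N(\varphi(z),\varphi(A_1))=C(\varphi(z),\varphi(A_1))$, while $z\in\Omega_1'$ gives a strict inclusion there---sidesteps the GK-dimension check entirely. Both routes are short; yours is self-contained.
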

	
	\begin{proof}
		Since $A_1$ is simple, $\varphi$ is injective. Take $w\in A_1$. If $w$ is nilpotent or semisimple, then $GK(F(w)) = GK(A_1) = 2$. Thus by Theorem \ref{th39}, we have $\varphi(\Omega_i)\subseteq \Omega_i\cup \Omega_i'$ and $\varphi(\Omega_i')\subseteq \Omega_i'$ for $i = 1,2.$
		
		\textbf{1.} Suppose \( \varphi(w) = z \in \Omega_1 \). Then , from the above, we know that \( w \in \Omega_1 \). There exist automorphisms \( \alpha_1, \alpha_2 \in \text{Aut}_K(A_1) \) such that
		\[
		\alpha_1(w) = f(p), \quad \alpha_2(z) = g(p), \quad f, g \in K[X], \quad \deg f, \deg g \geq 1.
		\]
		Thus, there exists an automorphism \( \widetilde{\varphi}: A_1 \to A_1 \) making the following diagram commute:
		
		\[
		\begin{tikzcd}
			A_1 \arrow[r, "\varphi"] \arrow[d, "\alpha_1"] & A_1 \arrow[d, "\alpha_2"] \\
			A_1 \arrow[r, "\widetilde{\varphi}"] & A_1
		\end{tikzcd}
		\quad
		\begin{tikzcd}
			w \arrow[r, "\varphi"] \arrow[d, "\alpha_1"] & z \arrow[d, "\alpha_2"] \\
			f(p) \arrow[r, "\widetilde{\varphi}"] & g(p)
		\end{tikzcd}
		\]	
		
		\vspace{0.7cm}
		Since \( p \in C(f(p)) \), we have \( \widetilde{\varphi}(p) \in C(g(p)) = K[p] \). Moreover, \( [\widetilde{\varphi}(q), \widetilde{\varphi}(p)] = 1 \), so \( \widetilde{\varphi}(p) \in \Lambda \). Therefore,	
		\[
		\widetilde{\varphi}(p) = ap + b \in \Lambda \cap K[p], \quad a \in K^{\times}, b \in K.
		\]		
		From \( [a^{-1}q, ap + b] = 1 \), we conclude that \( \widetilde{\varphi}(q) - a^{-1}q \in C(\widetilde{\varphi}(p)) = K[p] \). Hence,		
		\[
		\widetilde{\varphi}(q) = a^{-1}q + h(p), \quad h \in K[X].
		\]		
		Thus, \( \widetilde{\varphi}(A_1) = A_1 \), which leads to a contradiction.
		
		\textbf{2.} Let \( h = pq \). Suppose \( \varphi(w) = z \in \Omega_2 \). Then \( w \in \Omega_2 \), and there exist automorphisms \( \alpha_1, \alpha_2 \in \text{Aut}_K(A_1) \) such that	
		\[
		\alpha_1(w) = ch + d, \quad \alpha_2(z) = ah + b, \quad a, c \in K^\times, \quad b, d \in K.
		\]	
		Thus, there exists an automorphism \( \widetilde{\varphi}: A_1 \to A_1 \) making the following diagram commute:
		
		\[
		\begin{tikzcd}
			A_1 \arrow[r, "\varphi"] \arrow[d, "\alpha_1"] & A_1 \arrow[d, "\alpha_2"] \\
			A_1 \arrow[r, "\widetilde{\varphi}"] & A_1
		\end{tikzcd}
		\quad
		\begin{tikzcd}
			w \arrow[r, "\varphi"] \arrow[d, "\alpha_1"] & z \arrow[d, "\alpha_2"] \\
			ch+d \arrow[r, "\widetilde{\varphi}"] & ah+b
		\end{tikzcd}
		\]
		
		\vspace{0.7cm}
		Let \( c_1 = a^{-1}c, d_1 = a^{-1}d - a^{-1}b, w_1 = c_1h + d_1 \). Then \( \widetilde{\varphi}(w_1) = h \), and \( [h, \widetilde{\varphi}(p)] = \widetilde{\varphi}([w_1, p]) = c_1 \widetilde{\varphi}(p) \). Since \( [\widetilde{\varphi}(q), \widetilde{\varphi}(p)] = 1 \), we conclude that, \( \widetilde{\varphi}(p) \in D(h, c_1) \cap \Lambda \), and \( c_1 \in \text{Ev}(pq) = \mathbb{Z} \). From \cite{BO2024} we have 
		\[
		\Lambda \cap \left( \bigcup_{n \in \mathbb{Z}} D(h, n) \right) = \{ \lambda p, \lambda q \mid \lambda \in K^\times \}.
		\]
		Suppose \( \widetilde{\varphi}(p) = \lambda_0 p \), where \( \lambda_0 \in K^\times \). Then \( \widetilde{\varphi}(q) = \lambda_0^{-1}q + h(p),  h \in K[X] \). Therefore, \( \widetilde{\varphi}(A_1) = A_1 \), which leads to a contradiction.
		
		\textbf{3.} Clearly, \( \varphi(\Omega_0) = \Omega_0 = K \). If \( w \in \Omega_0' \), then \( F(w) \subsetneq A_1 \) and \( F(\varphi(w)) \subsetneq A_1 \), so \( \varphi(\Omega_0') \subseteq \Omega_1' \cup \Omega_2' \cup \Omega_0' \).
	\end{proof}
	Then one has the following.
	\begin{coro}\lb{c311}
		Let \( \varphi: A_1 \to A_1 \) be a homomorphism. Assume \( \text{Im} \, \varphi \cap (\Omega_1\cup \Omega_2)\ne \emptyset \). Then $\varphi\in Aut(A_1)$.
	\end{coro}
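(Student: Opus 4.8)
The plan is to reduce \textbf{Corollary \ref{c311}} directly to \textbf{Proposition \ref{lemma1}}. Suppose for contradiction that $\varphi\colon A_1\to A_1$ is a homomorphism with $\mathrm{Im}\,\varphi\cap(\Omega_1\cup\Omega_2)\ne\emptyset$ but $\varphi\notin\mathrm{Aut}(A_1)$. Since $A_1$ is simple, $\varphi$ is automatically injective, so $\varphi\notin\mathrm{Aut}(A_1)$ forces $\mathrm{Im}\,\varphi\subsetneq A_1$; thus Proposition \ref{lemma1} applies. But part (1) of that proposition gives $\varphi(A_1)\cap\Omega_1=\emptyset$ and part (2) gives $\varphi(A_1)\cap\Omega_2=\emptyset$, hence $\mathrm{Im}\,\varphi\cap(\Omega_1\cup\Omega_2)=\varnothing$, contradicting the hypothesis. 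Therefore $\varphi$ must be an automorphism.

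In writing this up I would first record the observation that simplicity of $A_1$ makes any nonzero homomorphism injective (the kernel is a proper two-sided ideal, hence zero, and $\varphi$ is nonzero because $\varphi(1)=1$), so the only way for $\varphi$ to fail to be an automorphism is to have proper image. Then I would invoke Proposition \ref{lemma1}(1)--(2) verbatim. Essentially the corollary is a one-line consequence once Proposition \ref{lemma1} is in hand.

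There is really no obstacle here: all the work has already been done in Proposition \ref{lemma1}, whose proof in turn rests on Theorem \ref{th39} (the GK-dimension comparison forcing $\varphi$ to preserve the strict/weak dichotomy for nilpotent and semisimple elements) together with the structural facts about $\Lambda$ and $\Lambda\cap(\bigcup_n D(pq,n))$ drawn from \cite{BO2024} and \cite{G2012}. The only point to be slightly careful about is that the hypothesis is stated for $\mathrm{Im}\,\varphi$ meeting $\Omega_1\cup\Omega_2$, i.e.\ the \emph{strict} nilpotent or strict semisimple locus, which is exactly the conclusion $\varphi(A_1)\cap\Omega_i=\emptyset$ ($i=1,2$) of Proposition \ref{lemma1}; if instead one only knew the image met $\Omega_1'\cup\Omega_2'$ the argument would not go through, so no strengthening is claimed.

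\begin{proof}
Since $A_1$ is simple and $\varphi(1)=1$, the kernel of $\varphi$ is a proper two-sided ideal of $A_1$, hence $\ker\varphi=0$ and $\varphi$ is injective. If $\varphi$ were not an automorphism, then $\mathrm{Im}\,\varphi\subsetneq A_1$, so Proposition \ref{lemma1} applies; by parts (1) and (2) of that proposition, $\varphi(A_1)\cap\Omega_1=\emptyset$ and $\varphi(A_1)\cap\Omega_2=\emptyset$, whence $\mathrm{Im}\,\varphi\cap(\Omega_1\cup\Omega_2)=\emptyset$, contradicting the hypothesis. Therefore $\varphi\in\mathrm{Aut}(A_1)$.
\end{proof}
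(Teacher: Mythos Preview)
Your proof is correct and is exactly the argument the paper has in mind: the corollary is stated immediately after Proposition~\ref{lemma1} with the phrase ``Then one has the following'' and no separate proof is given, since it is precisely the contrapositive of parts (1) and (2) of that proposition together with the observation that simplicity of $A_1$ forces any non-surjective endomorphism to have proper image.
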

	
	\section{The torsion algebra of a strict element after localization}
	\setcounter{equation}{0}\setcounter{theorem}{0}
	Throughout this section, $P$ is a non-commutative Poisson algebra over $K$ with $ (P,\ \cdot)$ being a right Ore domain. Let $\widetilde{P}$ be its right division ring.  Assume that there is a bracket $\{,\}$ on $\widetilde{P}$  such that  $\widetilde{P}$ is a   non-commutative Poisson algebra, and the natural inclusion $P\hookrightarrow \widetilde{P}$ is a Poisson homomorphism. It is clear that if the bracket exists then it must be unique. (The localization of $P$ with respect to an arbitrary right Ore set can be treated similarly and will be omitted.)
	
	For $0\ne s,t\in P$, we have
	$0=\{s, tt^{-1}\} =t\{s,t^{-1}\} + \{s,t\}t^{-1},$
	thus $$\{s, t^{-1}\} = -t^{-1}\{s,t\}t^{-1}.$$ Similarly, $\{s^{-1}, t\} = -s^{-1}\{s, t\}s^{-1}.$ Furthermore, we have
	\begin{align}
		0=\{s^{-1},tt^{-1}\} &= t\{s^{-1}, t^{-1}\} + \{s^{-1}, t\}t^{-1}\notag\\
		&= t\{s^{-1}, t^{-1}\} -s^{-1}\{s,t\}s^{-1}t^{-1}\notag
		.\notag 
	\end{align}
	Thus $$\{s^{-1}, t^{-1}\} = t^{-1}s^{-1}\{s,t\}s^{-1}t^{-1}.$$ Similarly, by $\{ss^{-1}, t^{-1}\} = 0$ one has $$\{s^{-1}, t^{-1}\} = s^{-1}t^{-1}\{s,t\}t^{-1}s^{-1}.$$ So
	\begin{equation}
		t^{-1}s^{-1}\{s,t\}s^{-1}t^{-1} = s^{-1}t^{-1}\{s,t\}t^{-1}s^{-1}.
		\label{eq4}\end{equation}
	The equation (\ref{eq4}) may not hold for arbitrary $P$, which means that the Poisson structure on  $\widetilde{P}$ may not exist. But it does exist for $P$ of Class 1 or Class 2, as we will see soon.

	Assume $P$ is of Class 1. In this case,  (\ref{eq4}) clearly holds. To be compatible with the bracket on $P$ and to obey Leibniz rule,  for $as^{-1} ,bt^{-1}\in \widetilde{P}$ one must have
	\[\{as^{-1},bt^{-1}\}=(-as^{-1}\{s,b\}+abt^{-1}s^{-1}\{s,t\}+\{a,b\}-bt^{-1}\{a,t\})(st)^{-1}.\]
	Then it is direct to verify that the bracket is well-defined and that  $\widetilde{P}$ is also a non-commutative Poisson algebra of Class 1, and the natural inclusion $P\hookrightarrow \widetilde{P}$ is a Poisson homomorphism. See \cite{cap}.
	
	Assume $P$ is of Class 2. Then both sides of (\ref{eq4}) equal $s^{-1}t^{-1}-t^{-1}s^{-1}$, thus (\ref{eq4}) holds. It is easy to see that the bracket
	\[\{as^{-1},bt^{-1}\}=as^{-1}\cdot bt^{-1}-bt^{-1}\cdot as^{-1},\quad as^{-1} ,bt^{-1}\in \widetilde{P}, \] makes $\widetilde{P}$ into a  non-commutative Poisson algebra of Class 2 such that the natural inclusion $P\hookrightarrow  \widetilde{P}$ is a Poisson homomorphism.

	Identify $P$  with the Poisson subalgebra of $ \widetilde{P}$. In this section we will determine the torsion algebra in  $\widetilde{P}$ of a strict element $x\in P$, extending some results in \cite{d}.
	
	Recall that $P$ is a  non-commutative Poisson algebra over $K$ such that $ (P,\ \cdot)$ is a right Ore domain. For $x\in P, \mu\in Ev(x, \widetilde{P})$ and $E\subseteq \widetilde{P}$ being a subspace satisfying $ad_x(E)\subseteq E$, let $F(x, E, \mu)=F(x,\widetilde{P} , \mu)\cap E$ and  $D(x, E, \mu)=D(x,\widetilde{P} , \mu)\cap E$.
	Analogous notations  will be used throughout.
	
	\begin{lemma}
		Assume	$x \in P$ is strictly semisimple, and  $\{x, z\} = \lambda z, \lambda \in K$ and $0 \neq z \in \widetilde{P}$. Then $z = uw^{-1}$ where $u,w$ are eigenvectors of $ad_x$ in $P$.
	\end{lemma}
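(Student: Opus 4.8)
The plan is to write $z$ in lowest terms as $z = u w^{-1}$ with $u, w \in P$, $w \neq 0$, and $w$ chosen so that the "denominator" is as simple as possible, then show that the eigenvector condition on $z$ forces both $u$ and $w$ to be eigenvectors of $\mathrm{ad}_x$ in $P$. Since $x$ is strictly semisimple, $\widetilde P$ decomposes (as a vector space) as $\widetilde P = \bigoplus_{\mu \in Ev(x,\widetilde P)} D(x,\widetilde P,\mu)$, so it suffices to expand $u$ and $w$ into their homogeneous components with respect to this grading and show each has only one nonzero component. First I would record the basic computation, already available in this section, that for $0 \neq w \in P$ one has $\{x, w^{-1}\} = -w^{-1}\{x,w\}w^{-1}$, and more generally that $\mathrm{ad}_x$ acts as a derivation of $\widetilde P$ that preserves the grading by $Ev(x,\widetilde P)$ in the sense that $D(x,\widetilde P,\mu)\cdot D(x,\widetilde P,\nu)\subseteq D(x,\widetilde P,\mu+\nu)$ and $D(x,\widetilde P,\mu)^{-1}\subseteq D(x,\widetilde P,-\mu)$; these follow from the Leibniz rule exactly as in Lemma~\ref{lemma4} and the displayed formula for $\{s,t^{-1}\}$.

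Next I would argue as follows. Write $w = \sum_{i} w_i$ with $w_i \in D(x,\widetilde P,\mu_i)\cap P$ the nonzero homogeneous components, $\mu_1,\dots,\mu_r$ distinct; similarly $u = \sum_j u_j$ with $u_j \in D(x,\widetilde P,\nu_j)\cap P$. From $z w = u$ and $\{x,z\} = \lambda z$ we get, applying $\mathrm{ad}_x$ to $zw = u$ and using that $\mathrm{ad}_x$ is a derivation,
\[
\lambda z w + z\,\mathrm{ad}_x(w) = \mathrm{ad}_x(u).
\]
The cleanest route, though, is to directly decompose $z$ itself: since $\widetilde P = \bigoplus_\mu D(x,\widetilde P,\mu)$, write $z = \sum_k z_k$ with $z_k \in D(x,\widetilde P,\rho_k)$, $\rho_k$ distinct. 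Then $\{x,z\} = \sum_k \rho_k z_k$, and the eigenvalue equation $\{x,z\} = \lambda z$ forces $\rho_k z_k = \lambda z_k$ for each $k$, hence (as $z\neq 0$) there is exactly one term and $z \in D(x,\widetilde P,\lambda)$. So the real content is not the eigenvector property of $z$ in $\widetilde P$ — that is immediate — but producing a representation $z = u w^{-1}$ with $u,w$ eigenvectors \emph{lying in $P$}.

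For that, I would proceed by induction on the complexity of a denominator. Pick any expression $z = a b^{-1}$ with $a,b \in P$, $b\neq 0$ (possible since $\widetilde P$ is the right division ring of $P$). Decompose $b = \sum_i b_i$ into homogeneous components $b_i \in D(x,P,\mu_i)$ and let $b_1$ be a nonzero one; since $P$ is a domain and $b_1$ is nonzero, the right Ore condition lets us multiply the relation $z = ab^{-1}$ on the right by a suitable element to reduce the number of homogeneous components in the denominator. More concretely: since $z \in D(x,\widetilde P,\lambda)$, for any eigenvector $c \in D(x,P,\nu)$ the element $zc \in D(x,\widetilde P,\lambda+\nu)$, and $z = (zc)c^{-1}$; so it suffices to find a single homogeneous $c \in P$ with $zc \in P$, because then $zc$ is automatically an eigenvector in $\widetilde P$, and being in $P$ it is an eigenvector in $P$, and $c$ is an eigenvector in $P$ by construction. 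Finding such a $c$ is where the right Ore property enters: $z = ab^{-1}$ means $zb = a \in P$; replacing $b$ by a homogeneous "right multiple", using that the set of denominators can be taken within the graded pieces — this is the step I expect to be the main obstacle, since it requires that the right Ore condition be compatible with the grading by $Ev(x)$. I would handle it by showing that for homogeneous $a \in D(x,P,\mu)$ and arbitrary $b\neq 0$, the common right multiple $as = br$ produced by Ore can itself be taken homogeneous, by decomposing $s,r$ and matching degrees; alternatively, one uses that $D(x,\widetilde P,\mu) = D(x,P,\mu)\cdot (D(x,P,0)\setminus\{0\})^{-1}$, proved by the same degree-bookkeeping. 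Once that is in place, writing $z \in D(x,\widetilde P,\lambda) = D(x,P,\lambda)D(x,P,0)^{-1}\cdot(\text{shift})$ gives the desired $z = uw^{-1}$ with $u \in D(x,P,\cdot)$, $w \in D(x,P,\cdot)$ eigenvectors in $P$, completing the proof.
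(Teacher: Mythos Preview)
Your outline correctly isolates the heart of the matter: once you know $z\in D(x,\widetilde P,\lambda)$ (which is the hypothesis), everything reduces to producing a single nonzero eigenvector $w\in P$ with $zw\in P$. But there are two problems.

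First, a misstatement: you write that ``since $x$ is strictly semisimple, $\widetilde P$ decomposes as $\widetilde P=\bigoplus_\mu D(x,\widetilde P,\mu)$.'' This is false in general. Strict semisimplicity of $x$ is a hypothesis about $P$, not about $\widetilde P$; in fact the paper shows right after this lemma (Corollary~\ref{p03} and the example following it) that $x$ typically becomes only \emph{weakly} semisimple in $\widetilde P$, so $D(x,\widetilde P)\subsetneq\widetilde P$. Fortunately you only ever decompose elements $u,w$ that lie in $P$, where the decomposition is valid, so this error does not propagate --- but it should be corrected.

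Second, and more seriously, the step you yourself flag as ``the main obstacle'' is a genuine gap. Your two proposed fixes are (a) a ``homogeneous Ore'' argument --- showing that in $as=br$ one can take $s,r$ homogeneous by decomposing and matching degrees --- and (b) proving $D(x,\widetilde P,\mu)=D(x,P,\mu)\cdot(D(x,P,0)\setminus\{0\})^{-1}$ directly. Option (b) is essentially the content of the Corollary that \emph{follows} this lemma, so invoking it here is circular. Option (a) does not work as stated: if $b$ is not homogeneous then $br_j$ is a sum of several homogeneous pieces, and matching degrees in $as=br$ gives relations like $as_{i_0}=\sum_{k,j} b_k r_j$ over index pairs with $\deg b_k+\deg r_j$ fixed, which is not a homogeneous Ore relation for $a$ and $b$.

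The clean device you are missing is exactly what the paper uses: set $I=I(z):=\{y\in P\mid zy\in P\}$. This is a nonzero right ideal of $P$ (nonzero because $z=ab^{-1}$ for some $a,b\in P$, so $b\in I$). The crucial one-line computation is that $I$ is $ad_x$-stable: for $y\in I$,
\[
z\{x,y\}=\{x,zy\}-\{x,z\}y=\{x,zy\}-\lambda zy\in P,
\]
so $\{x,y\}\in I$. Now strict semisimplicity of $x$ in $P$ gives $I=\bigoplus_\mu D(x,I,\mu)$, hence $I$ contains a nonzero homogeneous element $w$, and $u:=zw$ is then automatically an eigenvector in $P$. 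This replaces your vague induction/Ore argument with a two-line observation.
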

	\begin{proof}
		Let	$I = I(z) := \{ y\in P \mid zy \in P\},$ which  is clearly a nonzero right ideal of $(P,\ \cdot)$. Let $y\in I$. one has
		\[z\cdot\{x,y\} =\{x,z y\}-\{x,z\}\cdot y =\{x,z y\}-\lambda\cdot zy\in P.\]
		Thus $ad_x(I) \subseteq I.$	
		As $x$ is strictly semislmple in $P$,  $ I = \oplus_{\mu \in Ev(x, P)}D(x, I, \mu)$. Then there exists $\mu \in Ev(x, P)$ and $w \in I_{\mu}\stm\{0\}$ such that $ zw = u \in D(x, P,\lambda + \mu).$ So $ z = uw^{-1}$. 
	\end{proof}
	\begin{coro}
		Assume $x\in P$ is strictly semisimple, then 
		$Ev(x, \widetilde{P}) = \{\lambda - \mu \mid\lambda, \mu \in Ev(x, P)\}.$
		For $\nu \in Ev(x, \widetilde{P})$, 
		\[D(x, \widetilde{P}, \nu)=\{uw^{-1} \mid  u \in D(x, P, \mu_0 + \nu), w \in D(x, P, \mu_0)\}\ and\  \mu_0\in Ev(x,P).\}\]
		Meanwhile, 
		$D(x, \widetilde{P}) = \oplus_{\nu \in Ev(x, \widetilde{P})}D(x, \widetilde{P}, \nu).$
	\end{coro}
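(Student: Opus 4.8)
The plan is to deduce all three equalities in the statement from the preceding Lemma together with the quotient rule $\{x,w^{-1}\} = -w^{-1}\{x,w\}w^{-1}$ established at the start of this section. The single computation that drives everything: if $\lambda,\mu\in Ev(x,P)$, $u\in D(x,P,\lambda)$, and $w\in D(x,P,\mu)\setminus\{0\}$, then $\{x,w^{-1}\} = -w^{-1}(\mu w)w^{-1} = -\mu w^{-1}$, so the Leibniz rule gives
\[
\{x,uw^{-1}\} = \{x,u\}w^{-1} + u\{x,w^{-1}\} = \lambda\, uw^{-1} - \mu\, uw^{-1} = (\lambda-\mu)\, uw^{-1};
\]
moreover $uw^{-1}\ne 0$ whenever $u\ne 0$, since $(P,\ \cdot)$ is a domain. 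Hence $uw^{-1}\in D(x,\widetilde P,\lambda-\mu)$, and in particular $\lambda-\mu\in Ev(x,\widetilde P)$ (recall $D(x,P,\lambda)\ne 0$ for every $\lambda\in Ev(x,P)$, by definition of $Ev$).

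Next I would establish the description of $Ev(x,\widetilde P)$ and of $D(x,\widetilde P,\nu)$. The inclusion $Ev(x,\widetilde P)\supseteq\{\lambda-\mu\mid\lambda,\mu\in Ev(x,P)\}$ is exactly what the computation above yields; for the reverse inclusion, take $\nu\in Ev(x,\widetilde P)$ and $0\ne z\in\widetilde P$ with $ad_x(z)=\nu z$. The preceding Lemma (applicable since $x$ is strictly semisimple in $P$) writes $z = uw^{-1}$ with $u,w$ eigenvectors of $ad_x$ in $P$, say $u\in D(x,P,\alpha)$ and $w\in D(x,P,\beta)\setminus\{0\}$ with $\alpha,\beta\in Ev(x,P)$; comparing eigenvalues via the computation forces $\nu=\alpha-\beta$, so $\nu\in\{\lambda-\mu\mid\lambda,\mu\in Ev(x,P)\}$. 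The same two observations give the description of $D(x,\widetilde P,\nu)$: the computation shows that every $uw^{-1}$ with $w\in D(x,P,\mu_0)\setminus\{0\}$ and $u\in D(x,P,\mu_0+\nu)$ lies in $D(x,\widetilde P,\nu)$, which is the inclusion $\supseteq$ (for each $\mu_0\in Ev(x,P)$), while the Lemma gives the reverse inclusion by producing such a $\mu_0$, namely $\mu_0=\beta$.

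Finally, the equality $D(x,\widetilde P)=\bigoplus_{\nu\in Ev(x,\widetilde P)}D(x,\widetilde P,\nu)$ is the assertion that the eigenspaces of the single linear operator $ad_x$ on $\widetilde P$, for pairwise distinct eigenvalues, are linearly independent, which is the standard argument over the field $K$; thus the formal direct sum in the definition of $D(z)$ is a genuine internal direct sum inside $\widetilde P$, and combining with the previous paragraph it equals $\bigoplus_\nu\{uw^{-1}\mid u\in D(x,P,\mu_0+\nu),\ w\in D(x,P,\mu_0)\setminus\{0\},\ \mu_0\in Ev(x,P)\}$. I do not anticipate a genuine obstacle here: the whole corollary is bookkeeping built on the preceding Lemma. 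The only points to watch are invoking the Lemma with its hypothesis (strict semisimplicity of $x$ in $P$) genuinely in force, and keeping all denominators nonzero so that the quotients $uw^{-1}$ are defined and nonzero; this is automatic since products of nonzero elements in the domain $P$ are nonzero.
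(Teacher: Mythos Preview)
Your proof is correct. The paper states this corollary without proof, treating it as immediate from the preceding Lemma and the quotient rule $\{x,w^{-1}\}=-w^{-1}\{x,w\}w^{-1}$; your argument fills in precisely the natural details, and the final direct-sum assertion is indeed just the definition of $D(x)$ applied in $\widetilde P$.
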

	\begin{theorem}\lb{p04}
		Assume $x \in P$ satisifies $F(x,P) = P$, then $F(x, \widetilde{P}) = PC^{-1}$ where $$C = \big( \bigcup_{\lambda \in Ev(x, P)}D(x, P, \lambda)\big)  \backslash \{0\}.$$
	\end{theorem}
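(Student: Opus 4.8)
The plan is to establish the two inclusions $PC^{-1}\subseteq F(x,\widetilde P)$ and $F(x,\widetilde P)\subseteq PC^{-1}$ separately.

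The inclusion $PC^{-1}\subseteq F(x,\widetilde P)$ is the easy one. Since $F(x,\widetilde P)$ is a subalgebra of $(\widetilde P,\ \cdot)$, it suffices to check that $P\subseteq F(x,\widetilde P)$ and that $c^{-1}\in F(x,\widetilde P)$ for every $c\in C$. The first holds because $P=F(x,P)\subseteq F(x,\widetilde P)$. For the second, write $ad_x(c)=\lambda c$ with $\lambda\in Ev(x,P)$; the identity $\{x,c^{-1}\}=-c^{-1}\{x,c\}c^{-1}$ derived at the beginning of this section gives $ad_x(c^{-1})=-\lambda c^{-1}$, so $c^{-1}\in D(x,\widetilde P,-\lambda)\subseteq F(x,\widetilde P)$. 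Hence $uc^{-1}\in F(x,\widetilde P)$ for all $u\in P$ and $c\in C$.

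For the reverse inclusion, fix $z\in F(x,\widetilde P)$ and set $V=K[ad_x](z)$, a finite-dimensional $ad_x$-stable subspace of $\widetilde P$, spanned by $z,ad_x z,\dots,ad_x^{d}z$ for some $d$. For $w\in\widetilde P$ write $I(w)=\{y\in P\mid wy\in P\}$, a nonzero right ideal of $(P,\ \cdot)$ because $\widetilde P$ is the right ring of fractions of $P$. Put $I=\bigcap_{i=0}^{d}I(ad_x^{i}z)=\{y\in P\mid vy\in P\text{ for all }v\in V\}$; since $(P,\ \cdot)$ is a right Ore domain, a finite intersection of nonzero right ideals is nonzero, so $I\ne 0$. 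The point of enlarging $I(z)$ to $I$ is that $I$ is $ad_x$-stable: for $y\in I$ and $v\in V$ one has $ad_x(v)\in V$ and $v\cdot ad_x(y)=ad_x(vy)-ad_x(v)\cdot y$, where $ad_x(vy)\in P$ since $vy\in P$ and $ad_x(P)\subseteq P$, and $ad_x(v)\cdot y\in P$ since $ad_x(v)\in V$ and $y\in I$; hence $ad_x(y)\in I$.

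Now $I$ is an $ad_x$-stable subspace of $P=F(x,P)=\bigoplus_{\mu\in Ev(x,P)}F(x,P,\mu)$. Because every $y\in P$ lies in the finite-dimensional $ad_x$-stable subspace $K[ad_x](y)$, the canonical projections onto the generalized eigenspaces $F(x,P,\mu)$ restrict on each such subspace to polynomials in $ad_x$; consequently $I=\bigoplus_{\mu}\bigl(I\cap F(x,P,\mu)\bigr)$. Choose $\mu$ with $I\cap F(x,P,\mu)\ne 0$ and a nonzero $y$ in it, and let $k\ge 0$ be maximal with $w:=(ad_x-\mu)^{k}(y)\ne 0$. Then $ad_x(w)=\mu w$, and $w\in K[ad_x](y)\subseteq I$, so $w\in D(x,P,\mu)\cap I\setminus\{0\}\subseteq C\cap I$. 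In particular $zw\in P$, whence $z=(zw)w^{-1}\in PC^{-1}$, completing the proof.

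The main obstacle is precisely the enlargement of the denominator ideal: $I(z)$ itself need not be $ad_x$-stable, since $ad_x(z)$ is a new element of $\widetilde P$ whose right denominators need not lie in $I(z)$. Replacing $I(z)$ by the common denominator ideal $I$ of the whole finite-dimensional space $K[ad_x](z)$ repairs this, and everything afterwards — the submodule form of the generalized eigenspace decomposition and the extraction of an honest eigenvector from a nonzero generalized eigenspace — is routine.
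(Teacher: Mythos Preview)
Your proof is correct and follows essentially the same approach as the paper's: both directions are handled identically, with the key step being the introduction of the common denominator ideal $I$ of a finite-dimensional $ad_x$-stable subspace containing $z$, the verification via the Leibniz rule that $I$ is $ad_x$-stable, and the extraction of an eigenvector of $ad_x$ from $I$. Your write-up is in fact more explicit than the paper's in justifying why $I\ne 0$ (via the right Ore condition), why $I$ inherits the generalized eigenspace decomposition, and how to pass from a generalized eigenvector in $I$ to an honest eigenvector; the paper asserts these steps without elaboration.
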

	\begin{proof}
		Assume $w \in C,$ then $w \in D(x, P, \lambda)\backslash \{0\} $ for some $\lambda\in Ev(x,P)$ and 
		\[w^{-1} \in D(x, \widetilde{P}, -\lambda)\backslash \{0\} \subseteq F(x, \widetilde{P})\backslash \{0\}.\]
		Assume $u \in P$. As $P = F(x, P) \subseteq F(x, \widetilde{P}),$  $uw^{-1} \in F(x, \widetilde{P})$ and one has  $ PC^{-1} \subseteq F(x, \widetilde{P}).$
		
		Assume $z \in F(x, \widetilde{P})$, then there exists a subspace $E \subseteq F(x, \widetilde{P})$ such that $z \in E, ad_x(E) \subseteq E, \dim_K E < \infty$. Let $I = I(E) := \{w \in P \mid ew \in P, \forall e \in E\}  $, which is a nonzero right ideal of $(P,\ \cdot)$. For  $e \in E, w \in I$, by the definition of $I$ and $E$, one has
		\[	e\{x,w\}=\{x,ew\}-\{x,e\}w\in P,\]
		thus $\{x, w\}\in I,ad_x(I) \subseteq I.$ 
		As $F(x, P) = P$, we have 
		\[I = \bigoplus_{\mu \in Ev(x, P)} F(x, I, \mu).\] 
		So there exists $\mu \in Ev(x, P)$ and $ 0 \neq w \in  D(x, I, \mu)$ satisfying $ zw = u \in P,$ thus $ z = uw^{-1} \in PC^{-1}$. So one also has $F(x, \widetilde{P}) \subseteq PC^{-1}$, and the equality holds. 
	\end{proof}
	\begin{coro}\lb{p03}
		Assume $x \in P$ is strictly semisimple, then $F(x, \widetilde{P}) = D(x, \widetilde{P}) = PC^{-1}$, where $C = \left( \bigcup_{\lambda \in Ev(x, P)}D(x, P, \lambda)\right)  \backslash \{0\}$.
	\end{coro}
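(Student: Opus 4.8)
The plan is to deduce this corollary directly from Theorem \ref{p04} together with the eigenvector bookkeeping established in the lemma and corollary preceding it. Since $x$ is strictly semisimple we have $x\in\Omega_2$, so $D(x,P)=F(x,P)=P$; in particular $F(x,P)=P$, and Theorem \ref{p04} immediately gives $F(x,\widetilde{P})=PC^{-1}$ with $C=\big(\bigcup_{\lambda\in Ev(x,P)}D(x,P,\lambda)\big)\setminus\{0\}$. It therefore remains only to prove $F(x,\widetilde{P})=D(x,\widetilde{P})$, and since $D(x,\widetilde{P})\subseteq F(x,\widetilde{P})$ always holds, it suffices to show $PC^{-1}\subseteq D(x,\widetilde{P})$.

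For that inclusion, take $u\in P$ and $w\in C$, say $w\in D(x,P,\lambda)$ for some $\lambda\in Ev(x,P)$. First, using the identity $\{x,w^{-1}\}=-w^{-1}\{x,w\}w^{-1}$ derived at the start of this section, one gets $ad_x(w^{-1})=-\lambda w^{-1}$, so $w^{-1}\in D(x,\widetilde{P},-\lambda)$ (this is also recorded in the corollary preceding Theorem \ref{p04}). Next, because $x$ is strictly semisimple, $P=\bigoplus_{\mu\in Ev(x,P)}D(x,P,\mu)$, so we may write $u=\sum_\mu u_\mu$ with $u_\mu\in D(x,P,\mu)$, a finite sum. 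For each $\mu$, the Leibniz rule gives $ad_x(u_\mu w^{-1})=(ad_x u_\mu)w^{-1}+u_\mu(ad_x w^{-1})=(\mu-\lambda)u_\mu w^{-1}$, so $u_\mu w^{-1}\in D(x,\widetilde{P},\mu-\lambda)$. Hence $uw^{-1}=\sum_\mu u_\mu w^{-1}$ is a finite sum of eigenvectors of $ad_x$ in $\widetilde{P}$ and lies in $D(x,\widetilde{P})$.

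Combining, $PC^{-1}\subseteq D(x,\widetilde{P})\subseteq F(x,\widetilde{P})=PC^{-1}$, so all three sets coincide. I do not expect any serious obstacle: the substance is carried entirely by Theorem \ref{p04}, and the only facts to verify are that inverses of homogeneous elements stay homogeneous and that products of eigenvectors are eigenvectors, both of which follow from the derivation identities for $\{,\}$ on $\widetilde{P}$. The one point to be slightly careful about is that the decomposition of $u$ into eigenvectors must be taken in $P$ — legitimate precisely because $F(x,P)=D(x,P)=P$ — rather than only in $\widetilde{P}$.
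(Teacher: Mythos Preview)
Your proof is correct and follows essentially the same approach as the paper's: both apply Theorem \ref{p04} to get $F(x,\widetilde{P})=PC^{-1}$ from $F(x,P)=P$, then show $PC^{-1}\subseteq D(x,\widetilde{P})$ by decomposing $u\in P$ into $ad_x$-eigenvectors and observing that each $u_\mu w^{-1}$ is again an eigenvector, concluding by the sandwich $PC^{-1}\subseteq D(x,\widetilde{P})\subseteq F(x,\widetilde{P})=PC^{-1}$. Your write-up simply makes explicit the Leibniz computation that the paper leaves tacit.
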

	\begin{proof}
		Assume $u \in P, w \in C.$ As $x$ is strictly semisimple, $u = \sum_{i}u_i, u_i \in D(x, P, \lambda_i)$. One has $u_iw^{-1} \in D(x, \widetilde{P}),$ thus $uw^{-1} \in D(x, \widetilde{P})$, $PC^{-1} \subseteq D(x, \widetilde{P}).$
		
		On the other hand, as $F(x,P) = P$, by Theorem \ref{p04}	
		\[F(x, \widetilde{P}) = PC^{-1} \subseteq D(x, \widetilde{P}) \subseteq F(x, \widetilde{P}),\] 
		thus the equalities hold. 
	\end{proof}
	For example, the strictly semisimple element  $pq\in A_1$ is weakly semisimple in $\widetilde{A_1}$.
	
	\begin{coro}\lb{c04}
		Assume $x \in P $ is strictly nilpotent, then $F(x, \widetilde{P}) = N(x, \widetilde{P}) = PC^{-1},$ where $ C =C(x, P) \backslash \{0\}.$
	\end{coro}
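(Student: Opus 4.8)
The plan is to deduce this directly from Theorem \ref{p04}, together with the identity $\{s, t^{-1}\} = -t^{-1}\{s,t\}t^{-1}$ established at the beginning of this section. First I would note that "strictly nilpotent" means $C(x,P)\subsetneq N(x,P)=P$ with $Ev(x,P)=\{0\}$; in particular $F(x,P)=N(x,P)=P$, so Theorem \ref{p04} applies and gives $F(x,\widetilde{P})=P\,C'^{-1}$ with $C'=\big(\bigcup_{\lambda\in Ev(x,P)}D(x,P,\lambda)\big)\setminus\{0\}$. Since $Ev(x,P)=\{0\}$, the union collapses to $D(x,P,0)=C(x,P)$, so $C'=C(x,P)\setminus\{0\}=C$ and already $F(x,\widetilde{P})=PC^{-1}$.

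It then remains to check $F(x,\widetilde{P})=N(x,\widetilde{P})$, for which it suffices to prove the inclusion $PC^{-1}\subseteq N(x,\widetilde{P})$, the reverse inclusion $N(x,\widetilde{P})\subseteq F(x,\widetilde{P})$ being automatic. Take $u\in P$ and $w\in C=C(x,P)\setminus\{0\}$. Since $ad_x(w)=\{x,w\}=0$, the formula $\{x,w^{-1}\}=-w^{-1}\{x,w\}w^{-1}$ gives $ad_x(w^{-1})=0$. As $ad_x$ is a derivation of $\widetilde{P}$, the Leibniz rule yields $ad_x(uw^{-1})=ad_x(u)w^{-1}+u\,ad_x(w^{-1})=ad_x(u)w^{-1}$, and by an immediate induction $ad_x^{\,n}(uw^{-1})=ad_x^{\,n}(u)w^{-1}$ for all $n\ge 1$. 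Because $u\in P=N(x,P)$, there is $m$ with $ad_x^{\,m}(u)=0$, whence $ad_x^{\,m}(uw^{-1})=0$ and $uw^{-1}\in N(x,\widetilde{P})$.

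Combining the two steps gives $PC^{-1}\subseteq N(x,\widetilde{P})\subseteq F(x,\widetilde{P})=PC^{-1}$, so all three coincide, which is the claim. I do not expect any genuine obstacle here: the statement is a formal consequence of Theorem \ref{p04} once one observes that strict nilpotence forces $Ev(x,P)=\{0\}$, and the only computation involved is the behaviour of $ad_x$ on inverses of centralizing elements, which is precisely the identity derived at the start of the section; the argument is therefore short.
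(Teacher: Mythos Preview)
Your proposal is correct and follows essentially the same route as the paper: apply Theorem~\ref{p04} (using that strict nilpotence gives $F(x,P)=P$ and $Ev(x,P)=\{0\}$, so the set $C'$ there collapses to $C(x,P)\setminus\{0\}$), and then verify $PC^{-1}\subseteq N(x,\widetilde{P})$ via $ad_x(uw^{-1})=ad_x(u)w^{-1}$ for $w\in C$. Your write-up merely makes explicit the reason $ad_x(w^{-1})=0$ via the identity $\{x,w^{-1}\}=-w^{-1}\{x,w\}w^{-1}$, which the paper leaves implicit.
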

	\begin{proof}
		Assume $u \in P, w \in C, $ then $ ad_x(uw^{-1}) = ad_x(u)w^{-1}.$ As $x$ is strictly nilpltent, $\exists m > 0$ such that $ad_x^m(u) = 0$. Thus $ad_x^m(uw^{-1}) = 0. $ So $PC^{-1} \subseteq N(x, \widetilde{P}).$

		On the other hand, as $F(x,P) = P$ and $ Ev(P) = \{0\}$, by Theorem \ref{p04}	
		\[F(x, \widetilde{P}) = PC^{-1} \subseteq N(x, \widetilde{P}) \subseteq F(x, \widetilde{P}),\] 
		thus the equalities hold.
	\end{proof}
	For example, the strictly nilpotent element  $p\in A_1$ is weakly nilpotent in $\widetilde{A_1}$. The following result can be proved analogously.
	
	\begin{coro}\lb{p05}
		Assume $x \in P$ is of strictly Jordan type $(x \in \Omega_3(P)),$  then $F(x, \widetilde{P}) = PC^{-1}$ with $C = \left( \bigcup_{\lambda \in Ev(x, P)}D(x, P, \lambda)\right)  \backslash \{0\}$.
	\end{coro}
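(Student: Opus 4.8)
The plan is to observe that the hypothesis $x\in\Omega_3(P)$ is already strong enough to invoke Theorem~\ref{p04} directly. Indeed, by the definition of $\Omega_3(P)$, any $x\in\Omega_3(P)$ satisfies $D(x,P)\subsetneq F(x,P)=P$; in particular $F(x,P)=P$, which is precisely the hypothesis of Theorem~\ref{p04}. Applying that theorem verbatim gives $F(x,\widetilde P)=PC^{-1}$ with $C=\big(\bigcup_{\lambda\in Ev(x,P)}D(x,P,\lambda)\big)\setminus\{0\}$, which is the assertion.

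For completeness I would also spell out the ``analogous'' argument, parallel to the proofs of Corollaries~\ref{p03} and~\ref{c04}. For the inclusion $PC^{-1}\subseteq F(x,\widetilde P)$: if $w\in D(x,P,\lambda)\setminus\{0\}$ then $w^{-1}\in D(x,\widetilde P,-\lambda)\setminus\{0\}\subseteq F(x,\widetilde P)$, and since $P=F(x,P)\subseteq F(x,\widetilde P)$ and $F(x,\widetilde P)$ is an associative subalgebra of $\widetilde P$, the product $uw^{-1}$ lies in $F(x,\widetilde P)$ for every $u\in P$. For the reverse inclusion $F(x,\widetilde P)\subseteq PC^{-1}$ one runs the second half of the proof of Theorem~\ref{p04}: given $z\in F(x,\widetilde P)$, choose a finite-dimensional $ad_x$-stable subspace $E\ni z$ inside $F(x,\widetilde P)$, form the nonzero right ideal $I=I(E)=\{w\in P\mid Ew\subseteq P\}$, check $ad_x(I)\subseteq I$ via the Leibniz rule, decompose $I=\bigoplus_{\mu\in Ev(x,P)}F(x,I,\mu)$ using $F(x,P)=P$, and extract $0\ne w\in D(x,I,\mu)$ with $zw=u\in P$, so that $z=uw^{-1}\in PC^{-1}$.

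There is no genuine obstacle here: the corollary is essentially a specialization of Theorem~\ref{p04}, and the only subtlety is recognizing that membership in $\Omega_3(P)$ forces $F(x,P)=P$. The one conceptual point worth flagging — in contrast with Corollaries~\ref{p03} and~\ref{c04}, where one could further identify $F(x,\widetilde P)$ with $D(x,\widetilde P)$ or with $N(x,\widetilde P)$ respectively — is that a strictly Jordan element carries both a nontrivial semisimple and a nontrivial nilpotent part, so no such refinement is available and $F(x,\widetilde P)=PC^{-1}$ is the sharp statement one should expect.
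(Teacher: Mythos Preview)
Your proposal is correct and matches the paper's intent: the paper simply remarks that the result ``can be proved analogously'' without writing out details, and your observation that $x\in\Omega_3(P)$ entails $F(x,P)=P$ so that Theorem~\ref{p04} applies verbatim is exactly the point. In fact your first paragraph is sharper than an ``analogous'' argument, since Corollary~\ref{p05} asserts nothing beyond the conclusion of Theorem~\ref{p04} itself (unlike Corollaries~\ref{p03} and~\ref{c04}, which add the identifications $F=D$ and $F=N$ respectively).
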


	\section{Tensor Product of non-commutative Poisson algebras and classification of elements of the form $ z_1 \ot z_2 $}
	\setcounter{equation}{0}\setcounter{theorem}{0}
	Let $P_1,P_2$ be two non-commutative Poisson algebras over $K$. Let $P_1\ot P_2$ be their tensor product, which is an associative algebra. Let $\phi_i:P_i\rt P_1\ot P_2$ be the natural inclusion for $i=1,2$.  Assume that there exist a bracket  $\{,\}$ on $P_1\ot P_2$  such that  $P_1\ot P_2$ is a   non-commutative Poisson algebra, and the natural inclusions $\phi_i$ are Poisson homomorphisms for $i=1,2$ with  $\{\phi_1(P_1), \phi_2(P_2)\}=0 $. It is clear that if the bracket exists then it must be unique.
	
	For $a_1\ot a_2,b_1\ot b_2\in P_1\ot P_2$, one has 
	\begin{align}\lb{51}
		\{ a_1\ot a_2,b_1\ot b_2\} &= \{ a_1\ot a_2,b_1\ot 1\cdot 1\ot b_2\}\notag\\&=b_1\ot 1\cdot \{ a_1\ot a_2,1\ot b_2\}+\{ a_1\ot a_2,b_1\ot 1\}\cdot 1\ot b_2\notag\\
		&= b_1a_1\ot \{a_2,b_2\}+\{a_1,b_1\}\ot a_2b_2.
	\end{align}
	Similarly one has 	\begin{align}\lb{52}
		\{ a_1\ot a_2,b_1\ot b_2\} &= \{ a_1\ot a_2,1\ot b_2\cdot b_1\ot 1\}\notag\\
		&= \{a_1,b_1\}\ot b_2a_2+a_1b_1\ot \{a_2,b_2\}.
	\end{align}
	By the equality of (\ref{51}) and  (\ref{52}) one has 
	\begin{equation}\lb{53}
		\{a_1,b_1\}\ot[a_2,b_2]=[a_1,b_1]\ot \{a_2,b_2\},
	\end{equation}
	which does not hold for arbitrary non-commutative Poisson algebras $P_1,P_2$. 
	
	If $P_1,P_2$ are both non-commutative Poisson algebras of Class 1, then (\ref{53}) holds. It is easy to verify that the bracket on $P_1\ot P_2$ defined by
	\[\{a_1\ot a_2,b_1\ot b_2\}=\{a_1,b_1\}\ot a_2b_2+b_1a_1\ot \{a_2,b_2\}\] makes $P_1\ot P_2$ into a  non-commutative Poisson algebra of	Class 1 satisfying the desired properties.
	
	If $P_1,P_2$ are both non-commutative Poisson algebras of Class 2, then (\ref{53}) holds.  It is easy to verify that the bracket on $P_1\ot P_2$ defined by
	\[\{a_1\ot a_2,b_1\ot b_2\}=[a_1\ot a_2,b_1\ot b_2]\]
	makes $P_1\ot P_2$ into a  non-commutative Poisson algebra of	Class 2 satisfying the desired properties.
	
	Now we provide the definition of degree maps and filtrations on non-commutative Poisson algebras which are suitable for our purpose.  
	\begin{defi}\label{definition2}
		Let $(P,\cdot,\{,\})$ be a non-commutative Poisson algebra. A map $u:P\rightarrow \mathbb Z\cup\{-\infty\}$ is called a degree map if it satisfies that for any $z, w\in P$, 
		
		1. $u(\{z,w\}) \leq u(zw) = u(z) + u(w). $
		
		2. $u(z + w) \leq \max\{u(z), u(w)\}.$
		
		3.    $u(z) = -\infty$ iff $z = 0.$
		
	\end{defi}
	
	\begin{defi}
		A $\mathbb{Z}$-filtration or simply a filtration of the non-commutative Poisson algebra $P$ is a sequence of $K$-subspaces
		$$\cdots\subseteq P_{i-1}\subseteq P_i\subseteq P_{i+1}\subseteq\cdots,\quad i\in \mathbb{Z},$$
		such that 
		$$1 \in P_0,\quad  P_i \cdot P_j \subseteq P_{i + j}\ \text{and}\ \{P_i, P_j\} \subseteq P_{i + j} \text{ for all } i,j\in \mathbb{Z},\quad P = \bigcup_{i\in \mathbb{Z}}P_i. $$
		The filtration $\{P_t\}_{t\in\mathbb Z}$ (or simply  $\{P_t\}$)  will be called finite if each $P_t$ is finite dimensional, and  will be called discrete if $P_t=\{0\}$ for all $t< 0$.		
	\end{defi}
	With the filtration, we can obtain the associated graded non-commutative Poisson algebra.
	\begin{defprop}\label{proposition1}  Let  $\{P_t\}$ be a filtration on the non-commutative Poisson algebra $P$. Let
		$$gr\ P = \bigoplus_{i\in \mathbb{Z}}P_i/P_{i-1},$$
		The product and bracket on $gr\ P$ are defined as follows.	For $ x + P_{i-1} \in P_i/ P_{i-1}, y+P_{j-1}\in P_j/P_{j-1}$,
		let
		$$(x + P_{i-1})\cdot (y + P_{j-1}) = xy + P_{i + j - 1},\quad\{x + P_{i-1}, y + P_{j-1}\} = \{x,y\} + P_{i + j - 1},$$ which are well-defined and can be extended to $gr\ P$ bilinearly.
		Then $gr\ P$	is a graded non-commutative Poisson algebra, which will be called the associated (graded) non-commutative Poisson algebra of $P$.
		\end{defprop}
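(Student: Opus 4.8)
The plan is to carry out a routine verification in three stages: first that the product and bracket are well defined on the homogeneous components $P_i/P_{i-1}$, then that these operations glue to bilinear operations on the whole direct sum $gr\ P$, and finally that the resulting structure satisfies the axioms of a non-commutative Poisson algebra and is compatible with the grading. Nothing here is deep; I will indicate why each step reduces to the corresponding fact already holding in $P$, and the only point that calls for the slightest care is the gluing step.

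For well-definedness, fix $i,j$ and suppose $x,x'\in P_i$ with $x-x'\in P_{i-1}$ and $y,y'\in P_j$ with $y-y'\in P_{j-1}$. Writing $xy-x'y'=(x-x')y+x'(y-y')$ and using $P_{i-1}\cdot P_j\subseteq P_{i+j-1}$ together with $P_i\cdot P_{j-1}\subseteq P_{i+j-1}$ shows $xy-x'y'\in P_{i+j-1}$, so the formula $(x+P_{i-1})\cdot(y+P_{j-1}):=xy+P_{i+j-1}$ depends only on the classes. The identical computation for the bracket, using bilinearity of $\{\,,\}$ on $P$ and $\{P_{i-1},P_j\}+\{P_i,P_{j-1}\}\subseteq P_{i+j-1}$, gives $\{x,y\}-\{x',y'\}=\{x-x',y\}+\{x',y-y'\}\in P_{i+j-1}$. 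Thus for all $i,j$ we obtain $K$-bilinear maps $P_i/P_{i-1}\times P_j/P_{j-1}\to P_{i+j}/P_{i+j-1}$, the bilinearity being inherited directly from that of the product and bracket on $P$.

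To pass to $gr\ P$, recall that every element of $gr\ P=\bigoplus_i P_i/P_{i-1}$ has a unique decomposition into homogeneous components, almost all of which vanish. So for $a=\sum_i a_i$ and $b=\sum_j b_j$ with $a_i\in P_i/P_{i-1}$ and $b_j\in P_j/P_{j-1}$, I set $a\cdot b:=\sum_{i,j}a_ib_j$ and $\{a,b\}:=\sum_{i,j}\{a_i,b_j\}$, each term computed by the homogeneous formula and lying in $P_{i+j}/P_{i+j-1}$; the degree-$n$ part of $a\cdot b$ is the finite sum $\sum_{i+j=n}a_ib_j$. Uniqueness of the homogeneous decomposition makes these definitions unambiguous, and $K$-bilinearity on $gr\ P$ follows from bilinearity on each pair of components. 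By construction $(P_i/P_{i-1})\cdot(P_j/P_{j-1})\subseteq P_{i+j}/P_{i+j-1}$ and $\{P_i/P_{i-1},P_j/P_{j-1}\}\subseteq P_{i+j}/P_{i+j-1}$, so the grading is respected.

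It remains to check the Poisson axioms, and by bilinearity it suffices to check each on homogeneous elements $x+P_{i-1}$, $y+P_{j-1}$, $z+P_{k-1}$. Associativity follows from $(xy)z=x(yz)$ in $P$ after projecting to $P_{i+j+k}/P_{i+j+k-1}$; since $1\in P_0$, the class $1+P_{-1}$ is a two-sided identity of $gr\ P$. Skew-symmetry and the Jacobi identity for $\{\,,\}$ on $gr\ P$ are precisely the images under the projections $P_n\to P_n/P_{n-1}$ of the corresponding identities in $P$, and likewise the Leibniz rule $\{x,yz\}=\{x,y\}z+y\{x,z\}$ in $P$ projects to the Leibniz rule in $gr\ P$. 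Hence $gr\ P$ is a graded non-commutative Poisson algebra. The only step that requires any attention at all is the gluing in the previous paragraph, namely that the homogeneous formulas really do assemble into well-defined bilinear operations; this is handled entirely by the uniqueness of the homogeneous decomposition in a direct sum, so there is no genuine obstacle.
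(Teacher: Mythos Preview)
Your verification is correct and complete: well-definedness on homogeneous components, bilinear extension to the direct sum, and the axiom checks all go through exactly as you describe. The paper itself simply declares the proof straightforward and omits it, so there is no alternative approach to compare; you have supplied precisely the routine details the authors left to the reader.
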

	The proof is straightforward and is omitted.
	
	If $P$ is of Class 1 (resp. Class 2), then the associated algebra $gr\ P$ is of Class 1 (resp. Class 2).
	
	Given a degree map $u:P\rightarrow \mathbb Z\cup\{-\infty\}$ over the non-commutative Poisson algebra $P$, there exists a  filtration $\{P_t\}$ on $P$ as follows: \[P_t=\{a\in P\mid u(a)\le t\},\quad  t\in\mathbb Z.\] It is clear that $ P_i \cdot P_j \subseteq P_{i + j}\ \text{and}\ \{P_i, P_j\} \subseteq P_{i + j} \text{ for all } i,j\in \mathbb{Z}$,
	which will be referred as the filtration induced by $u$.
	
	Using the definitions above,  it can be concluded that $(P,\ \cdot)$ is a domain at this point. Furthermore, if $(gr\ P,\{,\})$ is commutative, then $\{P_i, P_j\} \subseteq P_{i+j-1}$, so $u(\{z, w\})\leq  u(z) + u(w) - 1$ for all $ z, w \in P.$
	
%
%
%
%
%
%
%
	When the induced filtration $\{P_t\}$ is finite, the non-commutative Poisson algebra $P$ has another property.
	
	\ble
	Assume that  $u:P\rightarrow \mathbb Z\cup\{-\infty\}$ is a degree map on the non-commutative Poisson algebra $P$, $\{P_t\}$ is the induced filtration which is finite. Then for $z\in P$, $u(z) = 0$ iff $z\in K^\times$.
	\ele
	\bp
	The set $B=u^{-1}(0)\cup \{0\}$ is a finite-dimensional domain over $K$, which must be a division algebra. As $K$ is algebraically closed, one has $B=K$. 
	\ep

	Now, we proceed to discuss the classification of the element $z_1\ot z_2$ in $P_1\ot P_2$ (with $z_i\in P_i$ for $i=1,2$) in the following setup: 
	
	Let $P_1$ and $P_2$  be  non-commutative Poisson algebras of the same class (Class 1 or Class 2)
	, and let $u_i:P_i\rightarrow \mathbb{Z}_{\geq 0}\cup \{-\infty\}$ for $i=1,2$ be degree maps on $P_1,P_2$ respectively. Assume that the  filtrations $\{P_{1,t}\}$ and $\{P_{2,t}\}$ induced by $u_1$ and $u_2$ respectively are both finite and discrete. The associated non-commutative Poisson algebras are $gr\ P_1$ and $gr\ P_2$.
	
	The sequences $\{P_{1,t}\ot P_2 \}$ and $\{P_1\ot P_{2,t}\} $ are filtrations on $P_1 \ot P_2$, and we can define  maps $u_1^*, u_2^*$ on $P_1\ot P_2$ as follows:	
	
	For $0\ne T \in P_1\ot P_2$, let $u_1^*(T)$ denote the smallest integer $n$ such that $T \in P_{1,n} \ot P_2$, and let $u_2^*(T)$ denote the smallest integer $m$ such that $T \in P_1 \ot P_{2,m}$. Let $u_1^*(0) = u_2^*(0)=-\infty$. 
	
	\begin{theorem}\label{thm4}
		Take $z_i\in P_i$ for $i=1,2$ and let $ \Theta = z_1 \ot z_2$. Then 
		
		1. Assume $z_i\in Z(P_i)$ for $i=1,2$, then $\Theta \in Z(P_1\ot P_2)$. 
		
		2. Assume exactly one of  $z_i$ belongs to $ Z(P_i)$, $i=1,2$. Without loss of generality, we let $z_1\in Z(P_1), z_2\notin Z(P_2)$. In this case, the possible forms of the four subalgebras related to $\Theta$ are as follows.
		
		$F(\Theta)$: If $z_1\notin K$, then $F(\Theta) = P_1\ot N(z_2)$; if $z_1\in K^\times$, then $F(\Theta) = P_1\ot F(z_2)$. 
		
		$D(\Theta)$: If $z_1\notin K$, then $D(\Theta) = P_1\ot C(z_2)$; if  $z_1\in K^\times$, then $D(\Theta) = P_1\ot D(z_2)$. 
		
		$N(\Theta)$: $N(\Theta) = P_1\ot N(z_2)$.
		
		$C(\Theta)$: $C(\Theta) = P_1\ot C(z_2)$.
		
		\end{theorem}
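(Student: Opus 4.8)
The plan is to compute the inner derivation $ad_\Theta$ on $P_1\ot P_2$ explicitly as a tensor-product operator and then read the four associated subalgebras off of it.

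For part 1, since $z_i\in Z(P_i)$ the element $z_i$ is central for both the associative product and the bracket, so for any simple tensor $b_1\ot b_2$ one has $(z_1\ot z_2)(b_1\ot b_2)=z_1b_1\ot z_2b_2=b_1z_1\ot b_2z_2=(b_1\ot b_2)(z_1\ot z_2)$ and, by (\ref{51}), $\{z_1\ot z_2,b_1\ot b_2\}=b_1z_1\ot\{z_2,b_2\}+\{z_1,b_1\}\ot z_2b_2=0$. Since simple tensors span $P_1\ot P_2$, this shows $\Theta\in Z(P_1\ot P_2,\ \cdot)\cap Z(P_1\ot P_2,\{,\})=Z(P_1\ot P_2)$; note this part needs neither the domain nor the degree hypotheses.

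For part 2 we may assume $z_1\neq0$, and recall that $(P_1,\ \cdot)$ is a domain because it carries the degree map $u_1$. Feeding $z_1\in Z(P_1)$ into (\ref{51}) exactly as in part 1 gives the operator identity $ad_\Theta=L_{z_1}\ot ad_{z_2}$, where $L_{z_1}$ denotes multiplication by $z_1$ on $P_1$ (two-sided, since $z_1$ is central, and injective, since $P_1$ is a domain). Because $L_{z_1}\ot1$ and $1\ot ad_{z_2}$ commute, $ad_\Theta^n=L_{z_1}^n\ot ad_{z_2}^n$, so the kernels of polynomials in $ad_\Theta$ can be analysed one tensor factor at a time. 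If $z_1\in K^\times$, i.e.\ $u_1(z_1)=0$, then $L_{z_1}$ is the scalar $z_1$, so $ad_\Theta-\mu=z_1\bigl(1\ot(ad_{z_2}-z_1^{-1}\mu)\bigr)$ and hence $\ker(ad_\Theta-\mu)^n=P_1\ot\ker(ad_{z_2}-z_1^{-1}\mu)^n$ by the elementary identity $\ker(1\ot B)=P_1\ot\ker B$; taking unions over $n$ and direct sums over eigenvalues yields $Ev(\Theta)=z_1\,Ev(z_2)$ and $F(\Theta)=P_1\ot F(z_2)$, $D(\Theta)=P_1\ot D(z_2)$, $N(\Theta)=P_1\ot N(z_2)$, $C(\Theta)=P_1\ot C(z_2)$.

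It remains to treat $z_1\notin K$, so that $d:=u_1(z_1)\geq1$; this is where I expect the real work. Since $L_{z_1}$, hence each $L_{z_1}^m$, is injective, the vector-space identity $\ker(A\ot B)=\ker A\ot P_2+P_1\ot\ker B$ gives at once $C(\Theta)=P_1\ot C(z_2)$ and $N(\Theta)=\bigcup_m\ker(L_{z_1}^m\ot ad_{z_2}^m)=P_1\ot N(z_2)$; the crux is to show $Ev(\Theta)=\{0\}$. For this I would use that $u_1^*\bigl(\sum_j a_j\ot c_j\bigr)=\max_j u_1(a_j)$ whenever the $c_j\in P_2$ are linearly independent (decompose $P_1\ot P_2$ over a $K$-basis of $P_2$), and that the exact multiplicativity of $u_1$ together with the domain property of $P_1$ then force $u_1^*\bigl((L_{z_1}^n\ot1)(y)\bigr)=nd+u_1^*(y)$ for every $y\neq0$, where $u_1^*(y)\geq0$ because $\{P_{1,t}\}$ is discrete. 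Then if $0\neq x$ satisfied $ad_\Theta(x)=\mu x$ with $\mu\neq0$, the element $\mu^n x=(L_{z_1}^n\ot1)\bigl((1\ot ad_{z_2}^n)(x)\bigr)$ is nonzero for every $n$, so $u_1^*(x)=u_1^*(\mu^n x)\geq nd$, which is absurd for large $n$; hence $D(\Theta,\mu)=0$ for $\mu\neq0$, and therefore $D(\Theta)=D(\Theta,0)=C(\Theta)=P_1\ot C(z_2)$ and $F(\Theta)=F(\Theta,0)=N(\Theta)=P_1\ot N(z_2)$. The one delicate point throughout is exactly this degree estimate: it is what makes $L_{z_1}$ strictly raise $u_1^*$ and thereby excludes eigenvectors of $ad_\Theta$ with nonzero eigenvalue; once it is in place, the remaining identities are routine bookkeeping with kernels of tensor-product operators.
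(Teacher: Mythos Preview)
Your proof is correct and rests on the same core observation as the paper's: since $z_1\in Z(P_1)$, one has $ad_\Theta=L_{z_1}\ot ad_{z_2}$ (the paper writes this out as $ad_\Theta^n(T)=\sum_i v_iz_1^n\ot\varphi_2^n(w_i)$ after choosing a decomposition $T=\sum_i v_i\ot w_i$), and then the injectivity of $L_{z_1}$ on the domain $P_1$ together with the degree map $u_1$ does the work in the non-scalar case. Your organization differs from the paper's in two minor but pleasant respects. First, for $z_1\in K^\times$ you treat $L_{z_1}$ as a scalar and read off all four subalgebras from the elementary identity $\ker(1\ot B)=P_1\ot\ker B$; the paper instead invokes the second degree map $u_2$ on $P_2$ to control $F(\Theta)$ in this case, which your purely algebraic argument shows to be unnecessary. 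Second, for $z_1\notin K$ you first prove $Ev(\Theta)=\{0\}$ by excluding nonzero eigenvectors via the strict degree growth of $L_{z_1}^n$, and then deduce $F(\Theta)=N(\Theta)$ and $D(\Theta)=C(\Theta)$ structurally; the paper instead shows directly that any $T\notin P_1\ot N(z_2)$ has $u_1^*(ad_\Theta^n(T))\to\infty$, giving $F(\Theta)\subseteq P_1\ot N(z_2)$ in one stroke. Both routes amount to the same degree estimate, but yours is slightly more economical since it only has to handle eigenvectors rather than arbitrary elements.
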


	\begin{proof}
		1. 	When $z_1\in Z(P_1), z_2\in Z(P_2) $, it is clear that $z_1\ot z_2\in Z(P_1\ot P_2) = \Omega_0(P_1\ot P_2)$.
		
		2. When $z_1\in Z(P_1), z_2\notin Z(P_2)$, let $T = \sum_{i\in I}v_i\ot w_i \in P_1 \ot P_2$. $I$ is a finite set, $\{v_i\}$ are linearly independent over $K$, $\{w_i\}$ are also linearly independent over $K$. Let $ \varphi_2 = ad_{z_2}$. It is easy to obtain by induction that 
		\begin{equation}
			ad_\Theta^n(T) = \sum_{i\in I}v_iz_1^n\ot \varphi_2^n(w_i).
			\label{eq3}\end{equation}
		There is a degree map $u_2$ on $P_2$, so $(P_2,\ \cdot)$ is a domain. Therefore  for any $n\geq 1$, $\{v_iz_1^n\}_i$ are also linearly independent over $K$. Clearly, $P_1 \ot N(z_2) \subseteq N(\Theta)$. If $T \in N(\Theta)$ and $ad_\Theta^n(T) = 0$, then $\varphi_2^n(w_i) = 0$ for all $i \in I$, so $N(\Theta) \subseteq P_1 \ot N(z_2)$, and equality holds. Similarly, $C(\Theta) = P_1 \ot C(z_2)$. The remaining cases to consider are $F(\Theta)$ and $D(\Theta)$.
		
		For $F(\Theta):$
		If $z_1\notin K,$  $u_1(z_1) > 0,$ and there exists $i_0\in I$ such that $w_{i_0}\notin N(P_2)$, then 
		$$u_1^*(ad_{\Theta}^n(T))\geq u_1(v_{i_0}z_1^n) = u_1(v_{i_0})+nu_1(z_1),\quad n\geq 0.$$
		Given $u_1(z_1) > 0$, it follows that $\sup_{n\geq 0}u_1^*(ad_{\Theta}^n(T)) = + \infty,T\notin F(\Theta)$. Conversely, if $T\in F(\Theta)$, then for all $i\in I,w_i\in N(z_2), T\in P_1\ot N(z_2) \subseteq N(\Theta)\subseteq F(\Theta)$, Thus the equalitis hold, and $Ev(\Theta)=\{0\}$.   
		
		
		If $z_1\in K^\times$, $u_1(z_1) = 0$ and there exists $i_0\in I$ such that $w_{i_0}\notin F(z_1)$, then $\sup_{n\geq0}\varphi_2^n(w_{i_0}) = +\infty$. Since  $\{v_iz_1^n\}_i$ are linearly independent over $K$, we have $u_2^*(ad^n_\Theta(T))\geq u_2(\varphi_2^n(w_{i_0}))$, so 
		$$\sup_{n\geq 0}u_2^*(ad_\Theta^n(T)) = +\infty,$$
		which implies $T\notin F(\Theta)$. Conversely, if for all $i\in I,w_i\in F(z_2) $, we can let $W_i = K[\varphi_2](w_i)$, $M = \max_{i\in I}\{u_1(v_i)\}$, then   for all $n\geq 0, u_1(v_iz_1^n) = u_1(v_i)\leq M$, so $v_iz_1^n\in P_{1,M}$, and 
		$$ad^n_\Theta(T)\in \sum_{i\in I}P_{1,M}\ot W_i ,$$
		where $P_{1,M},W_i$ are finite-dimensional over $K$. Thus $T\in F(\Theta)$.
		
		
		For $D(\Theta)$: As we have proved, when $z_1\notin K$ we have $Ev(\Theta)  =\{0\},D(\Theta) = C(\Theta) = P_1\ot C(\Theta)$. When $z_1\in K^\times$, we have
		$$\sum_{i\in I}v_i\ot(z_1\varphi_2(w_i)-\lambda w_i) = 0.$$
		Since $\{v_i\}$ are linearly independent over $K$, $w_i\in D(z_2),T\in P_1\ot D(z_2)\subseteq D(\Theta)$, and the equality holds. 
	\end{proof}
	
	The remaining case to consider is when both $z_1$ and $z_2$ are non-central elements. We have the following theorem. 
	\bthm	\label{thm2}
	Assume $z_1\notin Z(P_1),z_2\notin Z(P_2)$ and  the associated non-commutative Poisson algebras $gr\ P_1,gr\ P_2$ are both  commutative under Poisson brackets. Let $\Theta = z_1\ot z_2,$ then
	\[F(\Theta) = N(\Theta) = N(z_1)\ot N(z_2  ),\quad  D(\Theta) =  C(\Theta),\quad Ev(\Theta)=\{0\}.\]			
	\ethm
	\begin{proof}
		Take $v \in P_1, w \in P_2$, let 
		$$\varphi_1 = ad_{z_1},\quad \varphi_2 = ad_{z_2},\quad R_1(v) = vz_1,\quad L_2(w) = z_2w.$$
		The equations $\varphi_1R_1 = R_1\varphi_1,\varphi_2L_2 = L_2\varphi_2$ hold, and 
		$$(\varphi_1\otimes L_2)\cdot(R_1\ot\varphi_2)=(R_1\ot\varphi_2)\cdot(\varphi_1\ot L_2).$$
		Regardless of whether $P$ is of Class 1 or Class 2, one has 
		\begin{align}
			ad_\Theta(v\ot w)
			& = \{z_1,v\}\ot z_2w + vz_1 \ot \{z_2,w\}\notag\\
			& = \varphi_1(v)\ot L_2(w) + R_1(v) \ot \varphi_2(w)\notag\\
			& = (\varphi_1 \ot L_2 + R_1\ot \varphi_2)(v \ot w).\notag
		\end{align}
		Thus 
		\begin{align}
			ad_\Theta^n (v \ot w) &= (\varphi_1 \ot L_2 + R_1 \ot \varphi_2)^n(v \ot w)\notag\\
			& = \sum_{k=0}^{n}\binom{n}{k}\varphi_1^k(v)z_1^{n-k}\ot z_2^k\varphi_2^{n-k}(w)\notag\\
			& = vz_1^n \ot \varphi_2^n(w) + \sum_{k=1}^{n}\binom{n}{k}\varphi_1^k(v)z_1^{n-k}\ot z_2^k\varphi_2^{n-k}(w) .\label{1}
		\end{align}
		If $\varphi_1^m(v) = 0, \varphi_2^l(w) = 0,$ where $ m,l>0$, then $ad_\Theta^{m+l}(v\ot w) = 0$. Therefore 
		\begin{align}
			N(z_1)\ot N(z_2) \subseteq N(\Theta)\subseteq F(\Theta).
			\label{neq4}\end{align}
		
		Define the natural map $\pi:P_2\rt P_2/N(z_2), w\mapsto \bar{w} $. If $T\notin P_1\ot N(z_2)$, then let 
		$$T = \sum_{i\in I}v_i\ot w_i + \sum_{j\in J}\widetilde{v}_j\ot \widetilde{w}_j,$$
		where $I$ and $J$ are finite sets, $I\ne \emptyset$, and $\{\bar{w_i}\} $ are linearly independent over  $K$. If $J\ne \emptyset$, then $\{\widetilde{w}_j\}\subseteq  N(z_2)$ are also linearly independent over $K$. When the coefficients $c_i\in K$ are not all zero, $\sum_{i\in I}c_iw_i\notin N(z_2)$. Furthermore, for $ n\geq 1$,
		$$\sum_{i\in I}c_i\varphi_2^n(w_i) = \varphi_2^n(\sum_{i\in I}c_iw_i)\neq 0.$$
		Hence, $\{\varphi_2^n(w_i)\}_{i \in I}$ are linearly independent over $K$.
		
		It is known that $(gr\ P_1,\{,\})$ is commutative, thus 
		$$u_1(\varphi_1(v)) \leq u_1(v) + u_1(z_1) - 1,\quad v \in P_1.$$
		Define $M_n = \max_{i\in I}\{u_1(v_i)\}+nu_1(z_1)$. Using the above inequality, for $1 \leq k \leq n$, 
		\begin{align}
			u_1(\varphi_1^k(v_i)z_1^{n-k}) & \leq u_1(v_i) + k(u_1(z_1) - 1) + (n - k)u_1(z_1)\notag\\
			& = u_1(v_i) + nu_1(z_1) - k \notag\\
			& \leq M_n - k. \notag
		\end{align}
		For any $j \in J$, there exists $k_j \geq 0 $ such that $\varphi_2^{k_j}(\widetilde{w}_j) \neq 0, \varphi_2^{k_j+1}(\widetilde{w}_j) = 0$. When $n,k$ satisfy $0\le n-k\le k_j $, the following holds:
		\begin{align}
			u_1(\varphi_1^k(\widetilde{v}_j)z_1^{n-k}) & \leq u_1(\widetilde{v_j}) + nu_1(z_1) - k\notag\\
			& = u_1(\widetilde{v}_j) - \max_{i \in I}\{u_1(v_i)\} + \max_{i \in I}\{u_1(v_i)\} + nu_1(z_1)- n +  n - k\notag\\
			& \leq  u_1(\widetilde{v}_j) - \max_{i \in I}\{u_1(v_i)\} + M_n - n + k_j \notag\\
			& = M_n - n  + l_j,\quad l_j = u_1(\widetilde{v}_j) - \max_{i \in I}\{u_1(v_i)\} + k_j. \notag 
		\end{align}
		For any $ j \in J,$ $l_j$ is a constant. Therefore when $n > \max_{j\in J}\{k_j, l_j\}$, the following two inequalities hold:
		\begin{equation}
			u_1(\varphi_1^k(v_i)z_1^{n-k})<M_{n},\quad 1\leq k\leq n,i\in I,
			\notag
		\end{equation}
		\begin{equation}
			u_1(\varphi_1^k(\widetilde{v}_j)z_1^{n-k})<M_{n},\quad n-k_j\leq k\leq n,j\in J.
			\notag
		\end{equation}
		From equation (\ref{1}), it can be concluded that:
		\begin{align*}
			ad_\Theta^{n}(T)&=\sum_{i\in I}(v_iz_1^{n} \ot \varphi_2^{n}(w_i) +\sum_{k=1}^{n}\binom{n}{k}\varphi_1^k(v_i)z_1^{n-k}\ot z_2^k\varphi_2^{n-k}(w_i)) + \\ &\ \ \ \sum_{j\in J}\sum_{k:0\le n-k\le k_j}\binom{n}{k}\varphi_1^k(\tilde v_j)z_1^{n-k}\ot z_2^k\varphi_2^{n-k}(\tilde w_j) \\
			& = \sum_{i:u_1(v_i)=M_0}v_iz_1^{n}\ot \varphi_2^{n}(w_i)+\sum_{h}\widehat{v}_h\ot \widehat{w}_h,\quad u_1(\widehat{v}_h)<M_{n}.
		\end{align*}
		$\{\varphi_2^{n}(w_i)\}_{i\in I}$ are linearly independent over $K$, so 
		$$u_1^*(ad_\Theta^{n}(T)) = M_{n}, \quad n > \max_{j\in J}\{k_j, l_j\}.$$
		Also, because $z_1\notin Z(P_1), u_1(z_1)>0$, we have $M_n < M_{n+1}$, so $T \notin F(\Theta)$. Conversely $T\in F(\Theta)$ implies $T\in P_1\ot N(z_2),F(\Theta)\subseteq P_1\ot N(z_2)$. 
		By symmetry, we have $T \in N(z_1) \ot P_2$, so $F(\Theta)\subseteq N(z_1)\ot N(z_2)$.
		
		From equation (\ref{neq4}) and Lemma \ref{lemma0}, we can conclude that $F(\Theta) = N(\Theta) = N(z_1) \ot N(z_2),$ $D(\Theta) = C(\Theta)$, and $Ev(\Theta) =\{0\}$.
	\end{proof}	
	
	Based on the above theorem, we can further discuss the possible types of $\Theta$ when both $z_1$ and $z_2$ are non-central elements. 
	\begin{coro}\label{coro4.1}
		With the same conditions  as in Theorem \ref{thm2}. 
		
		If  $C(z_1)\subsetneq N(z_1)$ or $C(z_2)\subsetneq N(z_2)$, then $\Theta$ is nilpotent, and $\Theta$ is strictly nipotent iff $z_1,z_2$ are strictly nilpotent. 
		
		If $N(z_1) = C(z_1)$ and $N(z_2) = C(z_2)$, then $\Theta \in \Omega_0'(P_1\ot P_2)$. 
		
		If $N(z_1) = C(z_1)$ or $N(z_2) = C(z_2)$, then $C(\Theta) = C(z_1) \ot C(z_2)$.
		\end{coro}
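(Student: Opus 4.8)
The plan is to derive everything from Theorem~\ref{thm2}, which already tells us that $F(\Theta)=N(\Theta)=N(z_1)\ot N(z_2)$, $D(\Theta)=C(\Theta)$ and $Ev(\Theta)=\{0\}$. Since $Ev(\Theta)=\{0\}$, the element $\Theta$ must lie in one of $\Omega_0,\Omega_0',\Omega_1,\Omega_1'$, so its type is completely determined by whether $C(\Theta)\subsetneq N(\Theta)$ and whether $N(\Theta)=P_1\ot P_2$. Two elementary observations will be used throughout. First, writing $\varphi_i=ad_{z_i}$, the formula $ad_\Theta(v\ot w)=\varphi_1(v)\ot z_2w+vz_1\ot\varphi_2(w)$ gives at once $C(z_1)\ot C(z_2)\subseteq C(\Theta)$. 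Second, $(P_1,\cdot)$ and $(P_2,\cdot)$ are domains, since they carry degree maps, so left and right multiplication by any nonzero element is injective.

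I would prove the third statement first. Assume without loss of generality $N(z_1)=C(z_1)$; then by Theorem~\ref{thm2} one has $C(\Theta)\subseteq F(\Theta)=C(z_1)\ot N(z_2)$. Write an arbitrary $T\in C(\Theta)$ as $T=\sum_{i\in I}v_i\ot w_i$ with $\{v_i\}\subseteq C(z_1)$ and $\{w_i\}\subseteq N(z_2)$ each $K$-linearly independent. Since $\varphi_1(v_i)=0$, the formula above collapses to $ad_\Theta(T)=\sum_{i\in I}v_iz_1\ot\varphi_2(w_i)$, and $\{v_iz_1\}_{i\in I}$ is still linearly independent because right multiplication by $z_1\neq0$ is injective in the domain $P_1$. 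Hence $ad_\Theta(T)=0$ forces $\varphi_2(w_i)=0$, i.e.\ $w_i\in C(z_2)$, for every $i$, so $T\in C(z_1)\ot C(z_2)$. Combined with $C(z_1)\ot C(z_2)\subseteq C(\Theta)$ this yields $C(\Theta)=C(z_1)\ot C(z_2)$; the case $N(z_2)=C(z_2)$ is symmetric.

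For the second statement, if $N(z_1)=C(z_1)$ and $N(z_2)=C(z_2)$ then $F(\Theta)=N(z_1)\ot N(z_2)=C(z_1)\ot C(z_2)$, and the chain $C(z_1)\ot C(z_2)\subseteq C(\Theta)\subseteq F(\Theta)=C(z_1)\ot C(z_2)$ forces $C(\Theta)=F(\Theta)$; since $z_1\notin Z(P_1)$ gives $C(z_1)\subsetneq P_1$, we have $F(\Theta)=C(z_1)\ot C(z_2)\subsetneq P_1\ot P_2$, hence $\Theta\in\Omega_0'$. For the first statement, suppose (WLOG) $C(z_1)\subsetneq N(z_1)$ and pick $v_0\in N(z_1)\setminus C(z_1)$, i.e.\ $\varphi_1(v_0)\neq0$. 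Then $v_0\ot1\in N(z_1)\ot N(z_2)=N(\Theta)$, while $ad_\Theta(v_0\ot1)=\varphi_1(v_0)\ot z_2\neq0$ (domain, $z_2\neq0$), so $C(\Theta)\subsetneq N(\Theta)$ and therefore $\Theta\in\Omega_1\cup\Omega_1'$, i.e.\ $\Theta$ is nilpotent. Finally $\Theta\in\Omega_1$ iff $N(\Theta)=P_1\ot P_2$, i.e.\ $N(z_1)=P_1$ and $N(z_2)=P_2$; as $z_1,z_2$ are non-central, this is equivalent to $z_1,z_2$ both being strictly nilpotent, giving the asserted equivalence.

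The argument is short once Theorem~\ref{thm2} is in hand, and the only step needing real care is the one in the third statement: one must choose the reduced expression of $T$ compatibly with the splitting $F(\Theta)=C(z_1)\ot N(z_2)$ and then invoke the domain property of $P_1$ to keep $\{v_iz_1\}$ linearly independent — this is exactly what permits reading off $\varphi_2(w_i)=0$ term by term from $ad_\Theta(T)=0$. All remaining verifications are immediate consequences of Theorem~\ref{thm2} and the two observations in the first paragraph.
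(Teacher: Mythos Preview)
Your proof is correct and follows essentially the same approach as the paper's: the paper likewise uses $v\ot 1$ with $v\in N(z_1)\setminus C(z_1)$ for the first part, the chain $C(z_1)\ot C(z_2)\subseteq C(\Theta)\subseteq F(\Theta)$ for the second, and for the third writes $T\in C(\Theta)\subseteq C(z_1)\ot N(z_2)$ with the $v_i$ independent and uses the domain property to conclude $\varphi_2(w_i)=0$ from $\sum v_iz_1\ot\varphi_2(w_i)=0$. Your write-up is in fact slightly more explicit than the paper's in noting why the $v_i$ may be taken in $C(z_1)$ and why $\varphi_1(v_0)\ot z_2\neq 0$.
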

	\begin{proof}
		If $C(z_1)\subsetneq N(z_1)$, take $v \in N(z_1)\setminus C(z_1)$. Then $v\ot 1 \in N(\Theta) $ while
		$$ad_\Theta (v\ot 1) = \{z_1,v\}\ot z_2 \neq 0.$$
		Thus $C(\Theta)\subsetneq N(\Theta)$, and $\Theta$ is nilpotent by Theorem \ref{thm2}. Similarly, when $C(z_2)\subsetneq N(z_2)$, we can deduce that $\Theta$ is nilpotent. Theorem \ref{thm2} also tells us that $F(\Theta) = P_1\ot P_2$ iff $N(z_1) = P_1,N(z_2) = P_2$, which means that $z_1, z_2$ are strictly nilpotent.
		
		If $N(z_1) = C(z_1)$ and $N(z_2) = C(z_2)$, then 
		$$ F(\Theta) = C(z_1) \ot C(z_2) \subseteq C(\Theta) \subseteq F(\Theta),$$
		with equalities holding. Additionally, since $z_1 \notin Z(P_1), C(z_1) \subsetneq P_1$, it follows that $\Theta\in \Omega_0'(P_1\ot P_2)$.
		
		If $N(z_1) = C(z_1)$, once again, let $T = \sum_{i\in I}v_i\ot w_i \in C(\Theta),$ where $I$ is a finite set and $\{v_i \}\subseteq C(z_1)$ are linearly independent over $K$. Since there is a degree map $u_1$ on $P_1$, $(P_1,\ \cdot)$ is a domain, and $\{v_iz_1\}$ are linearly independent over $K$. It's known that 
		$$ad_\Theta(T) = \sum_{i\in I} (\varphi_1(v_i) \ot L_2(w_i) + R_1(v_i) \ot \varphi_2(w_i)) = \sum_{i \in I} v_iz_1 \ot \varphi_2(w_i) = 0.$$
		Therefore $ \varphi_2(w_i) = 0, w_i \in C(z_2), \forall i\in I$. Thus $T\in C(z_1) \ot C(z_2) \subseteq C(\Theta)$,  and the equality holds. The same reasoning applies when $N(z_2) = C(z_2)$, leading to $C(\Theta) = C(z_1)\ot C(z_2)$. 
	\end{proof}
	
	Next we apply the above results to the Weyl algebra.
	
	For a general non-commutative Poisson algebra $P$, given $z \in P$, the relationship between $F(z)$ and $N(z),D(z)$ could be a proper inclusion rather than equality. However, the situation is relatively simple in $A_1$, because for any $z\in A_1,$ at least one of the equalities $F(z) = N(z)$ or $F(z) = D(z)$  holds{\cite{d}}.
	
	Let $ p, q $ be the generators of $ A_1 $, and $ [q, p] = 1 $. Define a degree map on $ A_1 $: 
	$$ \beta_{1}: A_1 \rightarrow \mathbb{Z}_{\geq 0} \cup \{-\infty\},\quad 0 \mapsto -\infty, \quad 0\ne \sum_{i,j \in \mathbb{Z}_{\geq 0}} a_{ij} p^i q^j \mapsto \max\{i + j \mid a_{ij} \neq 0\}. $$
	Let $ P_1 = A_1 $, $ u_1 = \beta_1 $. In this case, the induced filtration $\{P_{1,t}\}$ is finite and discrete, and $ gr\ P_1 = K[X,Y] $, so $ A_1 $ satisfies the above theorem and corollary. For example, taking $ z \in A_1 $, by Theorem \ref{thm4}, the type of $ z \otimes 1 $ in $ A_2 $ is the same as the type of $ z $ in $ A_1 $.
	
	For the $ n $-th Weyl algebra $ A_n $ with $ n \geq 2 $, the Berstein filtration on it is  finite and discrete with $gr\ A_n$ a commutative domain.
	Therefore, it also satisfies the conditions of the above theorem.
	
	We have the following corollary:
	\begin{coro}	\label{coro5}
		Let $ z_1, z_2 \in A_1 \setminus  K $. If $ z_1 $ or $ z_2 $ is nilpotent, then $ z_1 \otimes z_2 $ is nilpotent in $ A_2 $, and $ z_1 \otimes z_2 $ is strictly nilpotent iff both $ z_1 $ and $ z_2 $ are strictly nilpotent.
		
		If neither $ z_1 $ nor $ z_2 $ is nilpotent, then $ z_1 \otimes z_2 \in \Omega_0'(A_2) $.
		
		If $ z_1 $ or $ z_2 $ is not nilpotent, then $ C(z_1 \otimes z_2) = C(z_1) \otimes C(z_2) $, which is commutative. If both $ z_1 $ and $ z_2 $ are semisimple, then $$ C(z_1 \otimes z_2) = K[z_1] \otimes K[z_2]. $$
	\end{coro}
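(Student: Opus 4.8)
The plan is to deduce everything from Corollary~\ref{coro4.1} (equivalently Theorem~\ref{thm2}) applied to $P_1=P_2=A_1$ with the degree map $\beta_1$, which, as noted just above, makes $A_1$ satisfy all the hypotheses of Theorem~\ref{thm2}: $gr\ A_1=K[X,Y]$ is a commutative domain, and the induced filtration is finite and discrete. Since $z_1,z_2\in A_1\setminus K$ means $z_i\notin Z(A_1)=K$, we are exactly in the setting $z_1\notin Z(P_1),z_2\notin Z(P_2)$. The first statement is then immediate from the first bullet of Corollary~\ref{coro4.1}: if say $z_1$ is nilpotent then either $z_1\in\Omega_1$ (so $C(z_1)=K\subsetneq N(z_1)=A_1$) or $z_1\in\Omega_1'$ (so $C(z_1)\subsetneq N(z_1)$), and in both cases $C(z_1)\subsetneq N(z_1)$, which by Corollary~\ref{coro4.1} gives that $\Theta=z_1\otimes z_2$ is nilpotent; the refinement ``strictly nilpotent iff both $z_i$ strictly nilpotent'' is precisely the ``iff'' clause there, using that $z_i\in\Omega_1(A_1)$ iff $N(z_i)=A_1$.

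For the second statement, ``neither $z_1$ nor $z_2$ nilpotent'': here I would note that a non-nilpotent element $z$ of $A_1$ satisfies $N(z)=C(z)$. Indeed $z\notin\Omega_1\cup\Omega_1'$, and since $GK(A_1)=2$ forces $\Omega_3=\Omega_3'=\emptyset$, $z$ lies in $\Omega_0'\cup\Omega_2\cup\Omega_2'$ (it is not in $\Omega_0=K$); in each of these $F(z)=D(z)$, hence by Lemma~\ref{lemma0} $N(z)=C(z)$. Applying the second bullet of Corollary~\ref{coro4.1} (with $N(z_i)=C(z_i)$ for $i=1,2$) yields $\Theta\in\Omega_0'(A_2)$.

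For the third statement, ``$z_1$ or $z_2$ not nilpotent'': say $z_1$ is not nilpotent, so as above $N(z_1)=C(z_1)$; the third bullet of Corollary~\ref{coro4.1} then gives $C(\Theta)=C(z_1)\otimes C(z_2)$. Commutativity of $C(z_1)\otimes C(z_2)$ follows because each $C(z_i)$ is a commutative subalgebra of $A_1$: for a non-central $z\in A_1$, $C(z)$ is commutative (this is the classical fact that the centralizer of a non-scalar element of $A_1$ is commutative, used throughout \cite{d}; if $z_i\in K$ then $C(z_i)=A_1$ is replaced by noting $z_i\notin K$ by hypothesis). Finally, if moreover both $z_1,z_2$ are semisimple, I would invoke the description of centralizers of semisimple elements: a (weakly or strictly) semisimple element $z\in A_1\setminus K$ has $C(z)=K[z]$ — this is again in \cite{d}, or follows since up to an automorphism $z$ is an affine function of $pq$ and $C(pq)=K[pq]$ — whence $C(\Theta)=K[z_1]\otimes K[z_2]$.

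The only genuinely delicate point is the citation-level input about $A_1$ that is not re-proved in the excerpt: that a non-scalar element of $A_1$ has a commutative centralizer, and that a semisimple non-scalar element $z$ has $C(z)=K[z]$. I expect the main obstacle in writing a clean proof to be making these two facts precise and properly attributed (to \cite{d} and the normal forms $z\sim f(p)$ for nilpotent and $z\sim a\,pq+b$ for semisimple under $\mathrm{Aut}_K(A_1)$, as already used in the proof of Proposition~\ref{lemma1}), since everything else is a direct specialization of Corollary~\ref{coro4.1}. One should also be slightly careful to observe that the ``strictly nilpotent'' refinement in the first bullet of Corollary~\ref{coro4.1} is stated for $z_1,z_2$ both non-central, which is guaranteed here by $z_1,z_2\in A_1\setminus K$.
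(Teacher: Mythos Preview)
Your proposal is correct and follows essentially the same route as the paper's proof, which simply invokes Theorem~\ref{thm2} and Corollary~\ref{coro4.1} together with the two external facts about $A_1$. The only difference is in attribution: the paper cites Amitsur~\cite{a} (not Dixmier) for the commutativity of centralizers of non-scalar elements, and Joseph~\cite{J1975} for the statement that a semisimple $z\in A_1\setminus K$ satisfies $C(z)=K[z]$.
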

	\begin{proof}
		It is known that the centralizer of non-constant elements in $ A_1 $ commutes{\cite{a}}. Furthermore, Joseph proved that for $ x \in \Omega_2(A_1) \cup \Omega_2'(A_1) $, $ C(x) = K[x] ${\cite{J1975}}, and by applying Theorem \ref{thm2} and Corollary \ref{coro4.1}, the result follows.
	\end{proof}
	Bavula pointed out in {\cite{VB2005}} that if the type of $ z \in A_1 \setminus K $ is known, and $ f(X) \in K[X] $ with $ \deg f > 1 $, then the possible types of $ f(z) $ are as follows:
	
	1. If $ z $ is not semisimple, then the type of $ f(z) $ is the same as that of $ z $.
	
	2. If $ z $ is semisimple, then $ f(z) \in \Omega_0'(A_1) $.
	
	\noindent By Corollary \ref{coro5}, we can draw the following conclusion:
	
	\begin{coro}\label{coro7}
		Given \( f, g \in K[X] \), \( \deg f, \deg g > 1 \), and \( z_1, z_2 \in A_1 \setminus K \), then \( f(z_1) \otimes g(z_2) \) and \( z_1 \otimes z_2 \) have the same type in \( A_2 \).			
	\end{coro}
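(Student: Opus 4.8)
The plan is to reduce the statement to Corollary \ref{coro5} together with the result of Bavula quoted just above, which describes the type of $f(z)$ in terms of that of $z$ for $z\in A_1\setminus K$ and $\deg f>1$. The key observation is that, by Corollary \ref{coro5}, the type in $A_2$ of a product $a\otimes b$ with $a,b\in A_1\setminus K$ depends only on the \emph{nilpotency class} of $a$ and of $b$: calling $x\in A_1\setminus K$ strictly nilpotent, weakly nilpotent, or non-nilpotent according as $x\in\Omega_1(A_1)$, $x\in\Omega_1'(A_1)$, or $x\notin\Omega_1(A_1)\cup\Omega_1'(A_1)$, Corollary \ref{coro5} says $a\otimes b\in\Omega_1(A_2)$ iff both $a,b$ are strictly nilpotent; $a\otimes b\in\Omega_1'(A_2)$ iff at least one of $a,b$ is nilpotent but they are not both strictly nilpotent; and $a\otimes b\in\Omega_0'(A_2)$ iff neither $a$ nor $b$ is nilpotent. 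So it suffices to show that $z_1$ and $f(z_1)$ have the same nilpotency class, and likewise $z_2$ and $g(z_2)$.

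Before that I would record that $f(z_1),g(z_2)\in A_1\setminus K$, so that Corollary \ref{coro5} indeed applies to $f(z_1)\otimes g(z_2)$: since $A_1$ is a domain over the algebraically closed field $K$, an element of $A_1\setminus K$ cannot be algebraic over $K$ (otherwise $K[z_1]$ would be a finite field extension of $K$, forcing $z_1\in K$), hence $\{1,z_1,z_1^2,\dots\}$ is linearly independent and $f(z_1)\notin K$ whenever $\deg f\ge1$; the same applies to $g(z_2)$.

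To compare nilpotency classes, take $z\in A_1\setminus K$ and $f\in K[X]$ with $\deg f>1$. If $z$ is nilpotent, then $z$ is not semisimple, so by Bavula's result $f(z)$ has exactly the same type as $z$; in particular $f(z)$ has the same nilpotency class as $z$. If $z$ is non-nilpotent, then, since $GK(A_1)=2$ gives $\Omega_3(A_1)=\Omega_3'(A_1)=\emptyset$ and $\Omega_0(A_1)=K$, we have $z\in\Omega_0'(A_1)\cup\Omega_2(A_1)\cup\Omega_2'(A_1)$; if $z$ is semisimple then $f(z)\in\Omega_0'(A_1)$ by Bavula's result, while if $z\in\Omega_0'(A_1)$ then $z$ is not semisimple so again $f(z)\in\Omega_0'(A_1)$, and either way $f(z)$ is non-nilpotent. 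Together with the trichotomy this shows $z$ and $f(z)$ lie in the same nilpotency class.

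Combining the last two paragraphs with the classification recalled in the first paragraph, applied to the pairs $(z_1,z_2)$ and $(f(z_1),g(z_2))$, gives that $z_1\otimes z_2$ and $f(z_1)\otimes g(z_2)$ have the same type in $A_2$. I expect no serious obstacle; the one point needing care is that in the nilpotent case one must invoke that Bavula's statement preserves the full \emph{type} of a non-semisimple element (not merely the property of being nilpotent), which is exactly what keeps the strictly-versus-weakly nilpotent distinction intact.
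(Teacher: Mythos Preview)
Your proof is correct and follows essentially the same approach as the paper: both reduce via Corollary~\ref{coro5} to comparing the nilpotency status of $z_i$ with that of $f(z_i)$ (resp.\ $g(z_i)$), and both invoke Bavula's result to make that comparison. Your version is slightly more careful in explicitly verifying that $f(z_1),g(z_2)\in A_1\setminus K$, a point the paper leaves implicit.
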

	\begin{proof}
		If $ z_1 $ or $ z_2 $ is nilpotent, then $ f(z_1) $ or $ g(z_2) $ is nilpotent. By Corollary \ref{coro5}, it follows that $ z_1 \otimes z_2 $ and $ f(z_1) \otimes g(z_2) $ are both nilpotent. $ f(z_1) \otimes g(z_2) $ is strictly nilpotent iff $ f(z_1), g(z_2) \in \Omega_1(A_1) $, iff $ z_1, z_2 \in \Omega_1(A_1) $, iff $ z_1 \otimes z_2 $ is strictly nilpotent. Therefore, in this case, the type of $ z_1 \otimes z_2 $ is the same as that of $ f(z_1) \otimes g(z_2) $.
		
		If neither $ z_1 $ nor $ z_2 $ is nilpotent, then $ f(z_1), g(z_2) \in \Omega_0'(A_1) $. Hence, by Corollary \ref{coro5}, $ z_1 \otimes z_2, f(z_1) \otimes g(z_2) \in \Omega_0'(P_1 \otimes P_2) $, and they have the same type.
	\end{proof}
	By analogy with Proposition \ref{lemma1}, we have:
	\begin{coro}
		Let $ \varphi: A_1 \rightarrow A_1 $ and $ \psi:A_1 \rightarrow A_1 $ be endomorphisms such that Im$\,\varphi \subsetneq A_1$,Im$\,\psi \subsetneq A_1$. Let $ \xi = \varphi \otimes \psi: A_2 \rightarrow A_2, v \otimes w \mapsto \varphi(v) \otimes \psi(w) $. If $ z_1, z_2 \in A_1 \setminus K $ and $ \Theta = z_1 \otimes z_2 $,  then 
		$$ F(\xi(\Theta)) = N(\varphi(z_1)) \otimes N(\psi(z_2)). $$
		
		If there is a nilpotent element among $ z_1 $ or $ z_2 $, then $ \xi(\Theta) \in \Omega_1'(A_2) $;
		If $ z_1 $ and $ z_2 $ are both semisimple elements, then $ \xi(\Theta) \in \Omega_0'(A_2) $.
	\end{coro}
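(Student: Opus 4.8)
The plan is to reduce everything to Theorem \ref{thm2} and Corollary \ref{coro5}, applied not to $z_1, z_2$ directly but to the pair $\varphi(z_1), \psi(z_2) \in A_1$, after first checking that these two elements again lie in $A_1 \setminus K$. Under the identification $A_2 = A_1 \otimes A_1$ the endomorphism $\xi = \varphi \otimes \psi$ sends $\Theta = z_1 \otimes z_2$ to $\varphi(z_1) \otimes \psi(z_2)$. Since $A_1$ is simple, both $\varphi$ and $\psi$ are injective; together with $\varphi|_K = \psi|_K = \mathrm{id}_K$ this forces $\varphi(z_1) \notin K$ and $\psi(z_2) \notin K$ (if $\varphi(z_1) = c \in K = \varphi(K)$, injectivity gives $z_1 = c$, contradicting $z_1 \in A_1 \setminus K$). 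Because $gr\ A_1 = K[X,Y]$ is commutative under the Poisson bracket, the hypotheses of Theorem \ref{thm2} are met for the pair $\varphi(z_1), \psi(z_2)$, hence $F(\xi(\Theta)) = F\big(\varphi(z_1) \otimes \psi(z_2)\big) = N(\varphi(z_1)) \otimes N(\psi(z_2))$, which is the first assertion.

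For the type statements I would feed the type information about $\varphi(z_1)$ and $\psi(z_2)$ supplied by Proposition \ref{lemma1} into Corollary \ref{coro5}. If $z_1$ is nilpotent, then $z_1 \in \Omega_1(A_1) \cup \Omega_1'(A_1)$, and since $\mathrm{Im}\,\varphi \subsetneq A_1$ Proposition \ref{lemma1}(1) gives $\varphi(z_1) \in \Omega_1'(A_1)$; thus $\varphi(z_1)$ is nilpotent but not strictly nilpotent, while $\psi(z_2) \in A_1 \setminus K$ is arbitrary. By Corollary \ref{coro5}, $\xi(\Theta) = \varphi(z_1) \otimes \psi(z_2)$ is nilpotent, and it is strictly nilpotent iff both tensor factors are; as $\varphi(z_1) \notin \Omega_1(A_1)$ this fails, so $\xi(\Theta) \in \Omega_1'(A_2)$. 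The case in which $z_2$ is nilpotent is symmetric. If instead $z_1$ and $z_2$ are both semisimple, then $z_i \in \Omega_2(A_1) \cup \Omega_2'(A_1)$ and Proposition \ref{lemma1}(2) gives $\varphi(z_1), \psi(z_2) \in \Omega_2'(A_1)$; a non-constant semisimple element is never nilpotent, so Corollary \ref{coro5} yields $\xi(\Theta) \in \Omega_0'(A_2)$.

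The argument is essentially bookkeeping once Proposition \ref{lemma1} and Corollary \ref{coro5} are in hand, so I do not anticipate a real obstacle. The one step requiring genuine care is the verification that $\varphi(z_1)$ and $\psi(z_2)$ remain outside $K$: without this neither Theorem \ref{thm2} nor Corollary \ref{coro5} is applicable, and it is precisely here that the simplicity of $A_1$, hence the injectivity of $\varphi$ and $\psi$, is used.
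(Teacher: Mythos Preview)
Your proposal is correct and follows essentially the same route as the paper: use Proposition~\ref{lemma1} to pin down the type of $\varphi(z_1)$ and $\psi(z_2)$, then feed this into Theorem~\ref{thm2}/Corollary~\ref{coro5} applied to the pair $\varphi(z_1),\psi(z_2)$. Your explicit check that $\varphi(z_1),\psi(z_2)\notin K$ via the simplicity (hence injectivity) of $A_1$ is a point the paper leaves implicit but which is genuinely required for Theorem~\ref{thm2} to apply.
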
 
	\begin{proof}
		If there is a nilpotent element among $ z_1 $ or $ z_2 $, let's assume it is $ z_1 $. By Proposition \ref{lemma1}, we know that $ \varphi(z_1) \in \Omega_1'(A_1), C(\varphi(z_1)) \subsetneq N(\varphi(z_1)) \subsetneq A_1 $. Therefore, we have:
		$$ C(\xi(\Theta)) \subsetneq N(\varphi(z_1)) \otimes N(\psi(z_2)) = F(\xi(\Theta)) \subsetneq A_2, $$
		and thus $ \xi(\Theta) \in \Omega_1'(A_2) $.
		
		If $ z_1 $ and $ z_2 $ are both semisimple elements, by Proposition \ref{lemma1}, we have $ \varphi(z_1), \psi(z_2) \in \Omega_2'(A_1) $, and $ C(\varphi(z_1)) = N(\varphi(z_1)) \subsetneq A_1, C(\psi(z_2)) = N(\psi(z_2)) \subsetneq A_1 $. Therefore, 
		$$ F(\xi(\Theta)) = C(\varphi(z_1)) \ot C(\psi(z_2)) \subseteq C(\xi(\Theta)) \subseteq F(\xi(\Theta)), $$ 
		the equalities hold, and $ \xi(\Theta) \in \Omega_0'(A_2) $.
	\end{proof}

	\section{The classification of elements of the form $  z_1\ot 1  + 1\ot z_2 $}
	\setcounter{equation}{0}\setcounter{theorem}{0}
	
	Now we will discuss the classification of the element $z_1\ot 1+1\ot z_2$ in $P_1\ot P_2$ (with $z_i\in P_i$ for $i=1,2$) in the same setup as in last section:
	
	Let $P_1$ and $P_2$  be  non-commutative Poisson algebras of the same class (Class 1 or Class 2), and let $u_i:P_i\rightarrow \mathbb{Z}_{\geq 0}\cup \{-\infty\}$ for $i=1,2$ be degree maps on $P_1,P_2$ respectively. Assume that the  filtrations $\{P_{1,t}\}$ and $\{P_{2,t}\}$ induced by $u_1$ and $u_2$ respectively are both finite and discrete.
	
	%

	\begin{theorem}	\label{thm1}
		Take $ z_1 \in P_1, z_2 \in P_2, \Gamma = z_1 \otimes 1 + 1 \otimes z_2 \in P_1 \otimes P_2 $, then
		$$ F(\Gamma) = F(z_1) \otimes F(z_2). $$
	\end{theorem}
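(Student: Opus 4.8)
The plan is to work with the operator identity $\mathrm{ad}_\Gamma = \mathrm{ad}_{z_1}\otimes 1 + 1\otimes \mathrm{ad}_{z_2}$, which holds because $\{z_1\otimes 1, 1\otimes z_2\}=0$ and the bracket on $P_1\otimes P_2$ restricts correctly on each factor. Writing $\varphi_1=\mathrm{ad}_{z_1}$, $\varphi_2=\mathrm{ad}_{z_2}$, these two operators commute, so the binomial formula gives $\mathrm{ad}_\Gamma^n = \sum_{k=0}^n \binom{n}{k}\varphi_1^k\otimes\varphi_2^{n-k}$. The inclusion $F(z_1)\otimes F(z_2)\subseteq F(\Gamma)$ is then the easy direction: if $v\in P_1$ satisfies $\dim_K K[\varphi_1](v)<\infty$ and $w\in P_2$ satisfies $\dim_K K[\varphi_2](w)<\infty$, then $K[\varphi_1](v)\otimes K[\varphi_2](w)$ is a finite-dimensional subspace of $P_1\otimes P_2$ stable under $\varphi_1\otimes 1$ and $1\otimes \varphi_2$, hence under $\mathrm{ad}_\Gamma$, so $v\otimes w\in F(\Gamma)$; extend bilinearly.

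For the reverse inclusion $F(\Gamma)\subseteq F(z_1)\otimes F(z_2)$, I would take an arbitrary $T\in F(\Gamma)$, so $\dim_K K[\mathrm{ad}_\Gamma](T)<\infty$. First I reduce to showing $F(\Gamma)\subseteq F(z_1)\otimes P_2$ and, symmetrically, $F(\Gamma)\subseteq P_1\otimes F(z_2)$; since $F(z_1)$ and $F(z_2)$ are subspaces, the intersection $(F(z_1)\otimes P_2)\cap(P_1\otimes F(z_2))$ is exactly $F(z_1)\otimes F(z_2)$ (a standard fact about tensor products of subspaces over a field). To prove $F(\Gamma)\subseteq F(z_1)\otimes P_2$, write $T=\sum_{i\in I} v_i\otimes w_i$ with $\{w_i\}_{i\in I}$ linearly independent over $K$. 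Suppose for contradiction that some $v_{i_0}\notin F(z_1)$, i.e. $\dim_K K[\varphi_1](v_{i_0})=\infty$, so $\{\varphi_1^k(v_{i_0})\}_{k\ge 0}$ spans an infinite-dimensional space and in particular $\sup_k u_1(\varphi_1^k(v_{i_0}))=\infty$ using that $\{P_{1,t}\}$ is finite (each $P_{1,t}$ is finite-dimensional, so boundedly-many linearly independent elements cannot all have bounded degree). I then want to show that applying $\mathrm{ad}_\Gamma$ many times produces elements of unbounded $u_1^*$-degree, where $u_1^*$ is the degree function on $P_1\otimes P_2$ coming from the filtration $\{P_{1,t}\otimes P_2\}$; this forces $\dim_K K[\mathrm{ad}_\Gamma](T)=\infty$, a contradiction.

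The main obstacle is controlling the cross terms in $\mathrm{ad}_\Gamma^n(T)=\sum_{k}\binom{n}{k}\sum_i \varphi_1^k(v_i)\otimes \varphi_2^{n-k}(w_i)$: the summands with different $k$ live in different graded pieces only if I can separate them, and the $\varphi_2^{n-k}(w_i)$ may collapse or become dependent. The way around this is to notice that we need only control the top $u_1^*$-degree. Since $u_1$ is a degree map, $u_1(\varphi_1^k(v_i))\le u_1(v_i)$ in general, but for the index $i_0$ with $v_{i_0}\notin F(z_1)$ we may after a preliminary argument pass to the component of largest $\varphi_1$-growth; more precisely, consider the largest $N$ with $\varphi_1^N(v_{i_0})$ of maximal degree among all $\varphi_1^k(v_i)$ appearing, $k\le n$. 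Actually the cleanest route, which I would pursue, is: among all $i\in I$ let $d=\max_i \limsup_k \frac{u_1(\varphi_1^k(v_i))}{k}$ (or simply track $\max_i\sup_k u_1(\varphi_1^k(v_i))$, which is $+\infty$ precisely when some $v_i\notin F(z_1)$), pick $n$ with some $\varphi_1^k(v_{i_0})$, $k\le n$, having $u_1$-degree exceeding $\max_i u_1(v_i)$ and every other term's degree, and use linear independence of $\{w_i\}$ together with the fact that the corresponding $\varphi_2^{n-k}(w_i)$ can be arranged nonzero (replacing $w_i$ by $\varphi_2^{m_i}(w_i)$ and absorbing constants, or restricting attention to the sub-sum where $\varphi_2^{n-k}(w_i)\ne 0$) to conclude $u_1^*(\mathrm{ad}_\Gamma^n(T))$ is unbounded as $n\to\infty$. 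This mirrors the degree-bookkeeping already carried out in the proof of Theorem \ref{thm2}, and I expect the argument there to transfer with the simplification that here no commutativity of $gr\,P_i$ is needed, since the operators $\varphi_1\otimes 1$ and $1\otimes\varphi_2$ act on disjoint tensor factors and never interfere.
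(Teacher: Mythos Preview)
Your overall strategy matches the paper's: use $ad_\Gamma=\varphi_1\otimes 1+1\otimes\varphi_2$ and the binomial expansion, obtain the easy inclusion directly, and for the reverse inclusion prove $F(\Gamma)\subseteq F(z_1)\otimes P_2$ (then argue symmetrically) by showing that if some $v_{i_0}\notin F(z_1)$ then $u_1^*(ad_\Gamma^n(T))$ is unbounded. That is exactly the paper's route.

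Where your write-up becomes vague is precisely the point the paper handles cleanly. In your notation, set $M_k=\max_{0\le t\le k,\ i\in I} u_1(\varphi_1^t(v_i))$ and let $k_1$ be the \emph{smallest} positive integer with $M_{k_1}>M_0$. Then for every $t<k_1$ and every $i$ one has $u_1(\varphi_1^t(v_i))\le M_{k_1-1}=M_0<M_{k_1}$. Hence in
\[
ad_\Gamma^{k_1}(T)=\sum_{i\in I}\sum_{t=0}^{k_1}\binom{k_1}{t}\varphi_1^t(v_i)\otimes\varphi_2^{k_1-t}(w_i)
\]
only the summands with $t=k_1$ can contribute to the top $u_1^*$-degree, and those summands are $\sum_{i}\varphi_1^{k_1}(v_i)\otimes w_i$ with the \emph{original} $w_i$. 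Since the $\{w_i\}$ are linearly independent and some $\varphi_1^{k_1}(v_i)$ has $u_1=M_{k_1}$, this top piece does not vanish, so $u_1^*(ad_\Gamma^{k_1}(T))=M_{k_1}$. Iterating with $k_2$ the next jump past $M_{k_1}$, and so on, gives the unbounded sequence. The first-jump trick is the mechanism you are groping for: it forces $n-k=0$ on the dominant term, so your worry that ``the corresponding $\varphi_2^{n-k}(w_i)$'' might be zero or dependent is a non-issue, and the proposed fix of ``replacing $w_i$ by $\varphi_2^{m_i}(w_i)$'' is both unnecessary and illegitimate (you cannot alter $T$). Also, focusing on a single index $i_0$ is not enough, since several indices may achieve the maximal degree simultaneously; it is the linear independence of the whole family $\{w_i\}$ that makes the top piece nonzero.

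One outright error: the claim ``$u_1(\varphi_1^k(v_i))\le u_1(v_i)$ in general'' is false. The degree axiom only gives $u_1(\{z_1,v\})\le u_1(z_1)+u_1(v)$, so the degree may grow by up to $u_1(z_1)$ at each application of $\varphi_1$. This growth is in fact what makes the argument work: if degrees could not increase, $\sup_k u_1(\varphi_1^k(v_{i_0}))=\infty$ would fail.
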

	
	We make a remark before the proof. Unlike Theorem \ref{thm2}, Theorem \ref{thm1} does not require $gr\ P_1$ and $gr\ P_2$ to be commutative under brackets. All other results in this section are related to Theorem \ref{thm1}, and similarly do not require $gr\ P_1$ and $gr\ P_2$  to be commutative under brackets.
	
	\begin{proof}
		If $z_1\in Z(P_1),z_2\in Z(P_2)$, it is evident that $\Gamma \in Z(P_1\ot P_2), F(\Gamma) = P_1\ot P_2 = F(z_1)\ot F(z_2)$. 
		
		If $z_1\in Z(P_1), z_2\in P_2\setminus Z(P_2)$, then by Theorem \ref{thm4},  $F(\Gamma) = F(1\ot z_2) = P_1\ot F(z_2) = F(z_1)\ot F(z_2)$. 
		
		If $z_1\in P_1\setminus Z(P_1), z_2\in Z(P_2)$, by symmetry, it follows that $ F(\Gamma) = F(z_1) \ot F(z_2)$. 
		
		If $z_1\in P_1\setminus Z(P_1), z_2\in P_2\setminus Z(P_2)$, suppose $0\ne T = \sum_{i\in I} v_i\otimes w_i\in P_1\otimes P_2$, where $I$ is a finite set, $\{v_i\}$ are linearly independent over $K$, and $\{w_i\}$  are also  linearly independent over $K$.
		
		Let $\varphi_1 = ad_{z_1}, \varphi_2 = ad_{z_2}, V_i = K[\varphi_1](v_i),W_i = K[\varphi_2](w_i)$, then we have
		\begin{align}
			ad_\Gamma^n(T) &= (\varphi_1  \otimes 1+1 \otimes \varphi_2)^n(\sum_{i\in I}v_i\otimes w_i)\notag \\
			&=\sum_{i\in I}\sum_{t=0}^{n}\binom{n}{t}\varphi_1^t(v_i)\otimes\varphi_2^{n-t}(w_i)\in \sum_{i\in I}V_i\otimes W_i.\label{eq1}
		\end{align}
		If for all $i \in I,$ $v_i\in F(z_1), w_i\in F(z_2)$, then $\dim_K (V_i) < +\infty, \dim_K(W_i) < +\infty$, and 
		$$\dim_K(\sum_{i\in I}V_i \otimes W_i) < +\infty.$$
		Thus $T \in F(\Gamma)$, and $F(z_1) \otimes F(z_2) \subseteq F(\Gamma)$. 
		
		If $ T\notin F(z_1) \otimes F(z_2)$, then there exists $i \in I$ such that 
		$v_i \notin F(z_1) $ or $w_i \notin F(z_2) $.
		
		Assume	$w_j \notin F(z_2) ,j \in I$. Since $\{P_{2,t}\}$ is finite, we have  $\sup_{n\geq 0}u_2(\varphi_2^n(w_j))=+\infty$. Let 
		$$M_k = \max_{0\leq n \leq k,i \in I}\left\{ u_2(\varphi^n_2(w_i))\right\}.  $$
		In particular $M_0 = \max_{i\in I} \{u_2(w_i)\}$, and 
		$$\sup_{k\geq 0}{M_k}= +\infty.$$
		Let $k_1$ be the smallest positive integer such that $M_{k} > M_0$ holds, then 
		$$u_2(\varphi_2^n(w_i))< M_{k_1},\quad 0\leq n < k_1 , i \in I.$$
		Thus 
		$$ad_\Gamma^{k_1}(T) = \sum_{i:u_2(\varphi_2^{k_1}(w_i))=M_{k_1}}v_i \otimes \varphi_2^{k_1}(w_i) + \sum_h \widehat{v}_h\otimes \widehat{w}_h,\quad u_2(\widehat{w}_h)<M_{k_1}.$$
		From the linear independence of  $\{v_i\}$ over $K$, it follows that $u_2^*(ad_\Gamma^{k_1}(T))= M_{k_1}$.
		
		By repeating the above steps, we obtain a sequence of positive integers: $0 < k_1 < k_2 <\cdots <k_m < \cdots$, such that the following holds:
		$$u_2^*(ad_\Gamma^{k_m}(T))=M_{k_m},\quad M_{k_m}<M_{k_{m+1}},m\geq 1.$$
		Thus $\dim_K\ K[ad_\Gamma](T)=+\infty ,T \notin F(\Gamma)$. Conversely, if $T
		\in F(\Gamma)$, then $T \in P_1 \ot F(z_2)$.
		
		By symmetry, we can conclude from $T \in F(\Gamma)$ that $T \in F(z_1) \ot P_2$. Therefore $F(\Gamma)\subseteq F(z_1)\ot F(z_2)$, and equality holds.
	\end{proof}
	\begin{rem}\label{rem0}
		Using equation (\ref{eq1}), we have the following obvious inclusions:
		$$D(z_1)\ot D(z_2) \subseteq D(\Gamma),$$
		$$N(z_1) \ot N(z_2) \subseteq N(\Gamma),$$
		$$C(z_1) \ot C(z_2) \subseteq \bigoplus_{\lambda\in K} D(z_1,\lambda) \ot D(z_2,-\lambda)\subseteq C(\Gamma).$$
		\end{rem}
	Given that $F(\Gamma) = \bigoplus_{\lambda \in K} F(\Gamma, \lambda),F(\Gamma, \lambda) = \bigcup_{n\geq 1}\ker (ad_\Gamma - \lambda)^n$, using equation (\ref{eq1}), we can obtain the following characterization of $F(\Gamma)$ and $N(\Gamma)$.
	\begin{lemma}\label{lemma2}
		Take  $\lambda\in K$. We have
		$$F(\Gamma,\lambda) = \bigoplus_{\mu \in K}F(z_1,\mu)\ot F(z_2,\lambda - \mu).$$
		In particular, when $\lambda = 0$ we have 
		$$ N(\Gamma) = F(\Gamma, 0) = \bigoplus_{\lambda \in K} F(z_1, \lambda) \ot F(z_2, -\lambda). $$
		Let $ -Ev(z_2) = \{\lambda \mid -\lambda \in Ev(z_2)\} $, then $ N(\Gamma) = N(z_1) \ot N(z_2) $ iff
		\begin{equation}
			Ev(z_1)\cap (-Ev(z_2)) = \{0\}.\label{eq2}
		\end{equation}
		\end{lemma}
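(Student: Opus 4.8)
The plan is to use that $ad_\Gamma$ splits as a sum of two commuting operators, one coming from each tensor factor, which reduces everything to a shifted version of the binomial identity (\ref{eq0}); the substantive input, $F(\Gamma)=F(z_1)\ot F(z_2)$, is already furnished by Theorem \ref{thm1}, so the rest is an elementary comparison of two graded decompositions.

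First I would write $\varphi_i=ad_{z_i}$ and recall from (\ref{eq1}) that $ad_\Gamma=\varphi_1\ot 1+1\ot\varphi_2$. For any fixed $\mu\in K$ one has $ad_\Gamma-\la=(\varphi_1-\mu)\ot 1+1\ot(\varphi_2-(\la-\mu))$, a sum of two commuting operators on $P_1\ot P_2$, so that for $v\in P_1$ and $w\in P_2$,
\[(ad_\Gamma-\la)^n(v\ot w)=\sum_{t=0}^n\binom{n}{t}(\varphi_1-\mu)^t(v)\ot(\varphi_2-(\la-\mu))^{n-t}(w).\]
From this, one inclusion of the first formula is immediate: if $v\in F(z_1,\mu)$ and $w\in F(z_2,\la-\mu)$, choose $k,l$ with $(\varphi_1-\mu)^k(v)=0$ and $(\varphi_2-(\la-\mu))^l(w)=0$, so that $(ad_\Gamma-\la)^{k+l}(v\ot w)=0$, i.e. $v\ot w\in F(\Gamma,\la)$; since $F(\Gamma,\la)$ is a subspace, $\bigoplus_{\mu\in K}F(z_1,\mu)\ot F(z_2,\la-\mu)\subseteq F(\Gamma,\la)$. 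The left-hand sum is direct because its summands are distinct coordinate subspaces of $F(z_1)\ot F(z_2)=\bigoplus_{\alpha,\beta}F(z_1,\alpha)\ot F(z_2,\beta)$.

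For the reverse inclusion I would sum these inclusions over all $\la\in K$ and regroup the tensor decomposition according to the value of $\alpha+\beta$:
\[F(z_1)\ot F(z_2)=\bigoplus_{\la\in K}\Bigl(\bigoplus_{\mu\in K}F(z_1,\mu)\ot F(z_2,\la-\mu)\Bigr)\subseteq\bigoplus_{\la\in K}F(\Gamma,\la)=F(\Gamma).\]
By Theorem \ref{thm1} the two ends coincide, hence every inclusion here is an equality. I then invoke the elementary fact that if a vector space admits two direct-sum decompositions $V=\bigoplus_\la A_\la=\bigoplus_\la B_\la$ with $A_\la\subseteq B_\la$ for every $\la$, then $A_\la=B_\la$ for every $\la$ (given $b\in B_{\la_0}$, write $b=\sum_\la a_\la$ with $a_\la\in A_\la\subseteq B_\la$; then $b-a_{\la_0}\in B_{\la_0}\cap\bigoplus_{\la\ne\la_0}B_\la=0$). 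Applying it with $A_\la=\bigoplus_{\mu\in K}F(z_1,\mu)\ot F(z_2,\la-\mu)$ and $B_\la=F(\Gamma,\la)$ gives the first formula; the case $\la=0$, after renaming the summation index, yields $N(\Gamma)=F(\Gamma,0)=\bigoplus_{\la\in K}F(z_1,\la)\ot F(z_2,-\la)$.

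For the final equivalence, I would note that $N(z_1)\ot N(z_2)=F(z_1,0)\ot F(z_2,0)$ is precisely the $\la=0$ summand of $N(\Gamma)=\bigoplus_{\la\in K}F(z_1,\la)\ot F(z_2,-\la)$, and that it is nonzero since it contains $1\ot 1$. A summand $F(z_1,\la)\ot F(z_2,-\la)$ is nonzero exactly when $\la\in Ev(z_1)$ and $-\la\in Ev(z_2)$, that is, when $\la\in Ev(z_1)\cap(-Ev(z_2))$; hence $N(\Gamma)$ collapses to its $\la=0$ term, i.e. $N(\Gamma)=N(z_1)\ot N(z_2)$, if and only if $Ev(z_1)\cap(-Ev(z_2))=\{0\}$, which is (\ref{eq2}). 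The heavy lifting is done by Theorem \ref{thm1}; beyond it there is no real obstacle, the only point needing care being to keep separate the index $\mu$ running over the grading of $F(z_1)\ot F(z_2)$ and the index $\la$ running over the $ad_\Gamma$-grading.
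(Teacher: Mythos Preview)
Your proof is correct and follows essentially the same approach as the paper: both establish the inclusion $\bigoplus_{\mu}F(z_1,\mu)\ot F(z_2,\la-\mu)\subseteq F(\Gamma,\la)$ via the binomial expansion of $(ad_\Gamma-\la)^n$, and then use Theorem~\ref{thm1} to obtain the reverse inclusion by comparing the two graded decompositions of $F(z_1)\ot F(z_2)=F(\Gamma)$. Your packaging of the reverse inclusion as the general linear-algebra fact ``$A_\la\subseteq B_\la$ with $\bigoplus A_\la=\bigoplus B_\la$ forces $A_\la=B_\la$'' is slightly cleaner than the paper's element-by-element version, but the content is the same.
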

	\begin{proof}
		Let $ v \in F(z_1, a) , w \in F(z_2, b) $, where $ a, b \in K $. Then there exist positive integers $ n_1 , n_2 $ such that 
		$$ (\varphi_1 - a)^{n_1}(v) = 0, \quad (\varphi_2 - b)^{n_2}(w) = 0, $$ where $ \varphi_1 = ad_{z_1} $ and $ \varphi_2 = ad_{z_2} $. It is known that 
		$$ ad_\Gamma - (a + b) = (\varphi_1 - a) \ot 1 + 1 \ot (\varphi_2 - b).$$ 
		Hence
		\begin{align}
			[ad_\Gamma - (a+b)]^{n_1 +n_2}(v\ot w) 
			& =\sum_{t = 0}^{n_1 +n_2}\binom{n_1 +n_2}{t}(\varphi_1-a)^t(v)\ot (\varphi_2-b)^{n_1 + n_2 -t}(w)\notag\\
			& = 0.\notag
		\end{align}
		Thus $v\ot w\in F(\Gamma,a + b),$ and we have 
		$$\bigoplus_{\mu \in K}F(z_1,\mu)\ot F(z_2,\lambda - \mu)\subseteq F(\lambda, \Gamma).$$
		
		Let $ T = \sum_{i \in I} v_i \ot w_i \in F(\Gamma, \lambda) $, where $ I $ is a finite set. For any $ i \in I $, by Theorem \ref{thm1}, we can assume that $ v_i \in F(z_1, a_i) , w_i \in F(z_2, b_i) , a_i, b_i \in K $, and that $ \{ v_i \ot w_i \} $ is linearly independent over $ K $. Since $ v_i \ot w_i \in F(\Gamma, a_i + b_i) $, we have $ a_i + b_i = \lambda $, and the equation holds.
		
		Finally, $ N(\Gamma) = N(z_1) \ot N(z_2) $ if and only if
		$$\bigoplus_{\lambda \in K^\times} F(z_1, \lambda) \ot F(z_2, -\lambda) = 0, $$ if and only if
		$ Ev(z_1) \cap (-Ev(z_2)) = \{ 0 \}. $
	\end{proof}
		%
	In special cases, the equation (\ref{eq2}) can hold, such as when $ Ev(z_1) = \{ 0 \} $. So we have the following corollary.
	\begin{coro}\label{cor1}
		When $ F(z_1) = N(z_1) $ or $ F(z_2) = N(z_2) $, one has
		$$N(\Gamma) = N(z_1)\ot N(z_2).$$
	\end{coro}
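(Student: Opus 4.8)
The plan is to reduce everything to the criterion already established in Lemma \ref{lemma2}, namely that $N(\Gamma) = N(z_1)\ot N(z_2)$ holds if and only if $Ev(z_1)\cap(-Ev(z_2)) = \{0\}$, where $-Ev(z_2) = \{\lambda \mid -\lambda\in Ev(z_2)\}$. So the only thing to verify is this intersection condition under the hypothesis that one of the torsion algebras coincides with the corresponding nil-algebra.

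First I would recall the equivalence noted in Section \ref{chap2}: for $z\in P$ one has $F(z) = N(z)$ precisely when $Ev(z) = \{0\}$ (indeed $F(z) = \bigoplus_{\la\in Ev(z)}F(z,\la)$ and $N(z) = F(z,0)$, so $F(z) = N(z)$ forces $Ev(z) = \{0\}$, and the converse is immediate). Hence if $F(z_1) = N(z_1)$ then $Ev(z_1) = \{0\}$, and therefore $Ev(z_1)\cap(-Ev(z_2))\subseteq\{0\}$; since $0\in Ev(z_1)$ and $0\in Ev(z_2)$ always, in fact $Ev(z_1)\cap(-Ev(z_2)) = \{0\}$. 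Symmetrically, if $F(z_2) = N(z_2)$ then $Ev(z_2) = \{0\}$, so $-Ev(z_2) = \{0\}$ and again the intersection is $\{0\}$.

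In either case condition (\ref{eq2}) of Lemma \ref{lemma2} is satisfied, so that lemma yields $N(\Gamma) = N(z_1)\ot N(z_2)$, which is the assertion. There is essentially no obstacle here: the statement is a direct specialization of Lemma \ref{lemma2}, and the only point requiring care is invoking the standard fact $F(z) = N(z)\Leftrightarrow Ev(z) = \{0\}$ so as to pass from the stated hypothesis to the eigenvalue-set condition. One could also remark that the same argument shows more generally that (\ref{eq2}) holds whenever $Ev(z_1)$ or $Ev(z_2)$ is trivial, which is the content of the sentence preceding the corollary.
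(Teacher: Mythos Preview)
Your argument is correct and is essentially the same as the paper's: both reduce to the criterion of Lemma \ref{lemma2} via the equivalence $F(z)=N(z)\Leftrightarrow Ev(z)=\{0\}$, the only difference being that the paper treats just the case $F(z_1)=N(z_1)$ (the other being symmetric) while you spell out both.
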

	\begin{proof}
		We may assume $ F(z_1) = N(z_1) $. We have $ F(z_1) = N(z_1) = F(z_1, 0) $ iff $ Ev(z_1) = \{ 0 \} $, and hence by Lemma \ref{lemma2}, we conclude that $ N(\Gamma) = N(z_1) \ot N(z_2). $
	\end{proof}
	We have characterized  $ N(\Gamma) $ in $ P_1 \ot P_2 $, along with a condition for equality in Remark \ref{rem0}. The subalgebra $ D(\Gamma) $ is not easy to characterize. We present the following conclusion, which is similar to Corollary \ref{cor1}: 
	\begin{coro}\label{cor2}
		When $ F(z_1) = D(z_1) $ or $ F(z_2) = D(z_2) $, for any $\lambda \in K $ one has
		$$ D(\Gamma, \lambda) = \bigoplus_{\mu \in K} D(z_1, \mu) \ot D(z_2, \lambda - \mu), $$
		and in this case $ D(\Gamma) = D(z_1) \ot D(z_2) $.
		\end{coro}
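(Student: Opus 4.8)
The plan is to fix $\lambda\in K$, prove the two inclusions of the claimed description of $D(\Gamma,\lambda)$, and then sum over $\lambda$ to obtain $D(\Gamma)=D(z_1)\ot D(z_2)$. The inclusion $\bigoplus_{\mu\in K}D(z_1,\mu)\ot D(z_2,\lambda-\mu)\subseteq D(\Gamma,\lambda)$ needs no hypothesis and is essentially Remark \ref{rem0}: if $v\in D(z_1,\mu)$ and $w\in D(z_2,\lambda-\mu)$, then by equation (\ref{eq1}) with $n=1$ one has $ad_\Gamma(v\ot w)=\varphi_1(v)\ot w+v\ot\varphi_2(w)=\mu\,(v\ot w)+(\lambda-\mu)(v\ot w)=\lambda\,(v\ot w)$, where $\varphi_i=ad_{z_i}$.

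For the reverse inclusion, assume without loss of generality $F(z_1)=D(z_1)$ (the case $F(z_2)=D(z_2)$ is symmetric). Comparing the internal direct sums $F(z_1)=\bigoplus_\mu F(z_1,\mu)$ and $D(z_1)=\bigoplus_\mu D(z_1,\mu)$ with $D(z_1,\mu)\subseteq F(z_1,\mu)$ forces $F(z_1,\mu)=D(z_1,\mu)$ for every $\mu$. Substituting this into Lemma \ref{lemma2} gives
\[ F(\Gamma,\lambda)=\bigoplus_{\mu\in K}D(z_1,\mu)\ot F(z_2,\lambda-\mu). \]
The key observation is that each summand $D(z_1,\mu)\ot F(z_2,\lambda-\mu)$ is $ad_\Gamma$-invariant, and on it $ad_\Gamma-\lambda=(\varphi_1-\mu)\ot 1+1\ot(\varphi_2-(\lambda-\mu))$ acts simply as $1\ot(\varphi_2-(\lambda-\mu))$, since $\varphi_1-\mu$ vanishes on $D(z_1,\mu)$ and $\varphi_2-(\lambda-\mu)$ preserves $F(z_2,\lambda-\mu)$. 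Thus the displayed decomposition of $F(\Gamma,\lambda)$ is a decomposition into $ad_\Gamma$-stable subspaces.

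Now take $T\in D(\Gamma,\lambda)\subseteq F(\Gamma,\lambda)$ and write $T=\sum_\mu T_\mu$ with $T_\mu\in D(z_1,\mu)\ot F(z_2,\lambda-\mu)$. Since $(ad_\Gamma-\lambda)(T)=0$ and the decomposition is $ad_\Gamma$-stable, $(ad_\Gamma-\lambda)(T_\mu)=0$, i.e. $\bigl(1\ot(\varphi_2-(\lambda-\mu))\bigr)(T_\mu)=0$, for each $\mu$. Writing $T_\mu=\sum_i v_i\ot w_i$ with $\{v_i\}\subseteq D(z_1,\mu)$ linearly independent over $K$ and $w_i\in F(z_2,\lambda-\mu)$, linear independence forces $(\varphi_2-(\lambda-\mu))(w_i)=0$, so $w_i\in D(z_2,\lambda-\mu)$ and $T_\mu\in D(z_1,\mu)\ot D(z_2,\lambda-\mu)$. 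Hence $D(\Gamma,\lambda)=\bigoplus_{\mu\in K}D(z_1,\mu)\ot D(z_2,\lambda-\mu)$. Summing over $\lambda\in K$ and distributing the tensor product over the direct sums (reindexing $\nu=\lambda-\mu$) yields $D(\Gamma)=\bigoplus_{\mu,\nu}D(z_1,\mu)\ot D(z_2,\nu)=D(z_1)\ot D(z_2)$. I expect the only delicate point to be the bookkeeping around the $ad_\Gamma$-invariance of the pieces $D(z_1,\mu)\ot F(z_2,\lambda-\mu)$ and the fact that a kernel element of $ad_\Gamma-\lambda$ lying in a direct sum of invariant subspaces has each of its components in the kernel; once this is in place, the rest is the same linear-independence-over-a-domain argument already used in Sections 5 and 6, and, as with Theorem \ref{thm1}, no commutativity of $gr\ P_1$ or $gr\ P_2$ is required.
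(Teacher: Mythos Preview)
Your proof is correct and follows essentially the same approach as the paper: both use Lemma \ref{lemma2} together with the hypothesis $F(z_1)=D(z_1)$ to replace $F(z_1,\mu)$ by $D(z_1,\mu)$, then compute that $(ad_\Gamma-\lambda)$ reduces to $1\ot(\varphi_2-(\lambda-\mu))$ on vectors whose first tensor factor lies in $D(z_1,\mu)$, and conclude by linear independence of the $v_i$. The only organizational difference is that you first decompose $T$ as $\sum_\mu T_\mu$ and invoke the $ad_\Gamma$-invariance of each summand $D(z_1,\mu)\ot F(z_2,\lambda-\mu)$ to treat the pieces separately, whereas the paper keeps a single expansion $T=\sum_i v_i\ot w_i$ with $v_i\in D(z_1,a_i)$ (allowing distinct $a_i$) and performs one global computation; both routes arrive at the same linear-independence step.
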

	\begin{proof}
		Let $ T = \sum_{i \in I} v_i \ot w_i \in D(\Gamma, \lambda) \subseteq F(\Gamma, \lambda) , \lambda \in K , I $ is a finite set. From Lemma \ref{lemma2}, we know that for all $i \in I $, we can assume $ v_i \in F(z_1, a_i) , w_i \in F(z_2, b_i) $, with $ \{ v_i \} $ being linearly independent over $ K $, and $ a_i + b_i = \lambda $. We assume $ F(z_1) = D(z_1) $, and let $ \varphi_1 = ad_{z_1} $, $ \varphi_2 = ad_{z_2} $, then 
		\begin{align}
			(ad_\Gamma-\lambda)(T) &= \sum_{i\in I} (\varphi_1(v_i)\ot w_i + v_i\ot \varphi_2(w_i) - \lambda v_i\ot w_i)\notag\\
			&= \sum_{i\in I} (a_iv_i\ot w_i + v_i\ot \varphi_2(w_i) - v_i\ot\lambda  w_i)\notag\\
			&= \sum_{i\in I} v_i\ot (\varphi_2(w_i)-(\lambda - a_i)w_i)\notag\\
			& = 0.\notag
		\end{align}
		Since $\{v_i\}$ are linearly independent over $K$, for all $i\in I, w_i\in D(z_2, \lambda - a_i)$. Therefore $$T\in\bigoplus_{\mu \in K}D(z_1,\mu)\ot D(z_2,\lambda - \mu)\subseteq D(\Gamma,\lambda),$$
		with equality holding. $D(\Gamma) =\bigoplus_{\lambda\in K} D(\Gamma,\lambda) = D(z_1)\ot D(z_2)$. The same conclusion holds when $F(z_2)= D(z_2)$.
	\end{proof}
	Now we can attempt to determine the type of $\Gamma$ in $P_1\ot P_2$. If $z_1 \in  Z(P_1)$ or $z_2\in Z(P_2)$, the type of $\Gamma$ is the same as that of $1\ot z_2$, and the type of $1\ot z_2$ can be determined using Theorem \ref{thm4}. In the case where neither $z_1$ nor $z_2$ is central, we will determine the type of $\Gamma$ under various conditions. 
	\begin{coro}\label{coro1}
		If $F(z_1) = N(z_1)$ and $ F(z_2) = N(z_2)$, then 
		$$F(\Gamma) = N(\Gamma) = N(z_1)\ot N(z_2),\quad  D(\Gamma) = C(\Gamma),\quad Ev(\Gamma)=\{0\}.$$
		Moreover, if $C(z_1)\subsetneq N(z_1)$ or $C(z_2)\subsetneq N(z_2)$, then 
		$$D(\Gamma)=C(\Gamma)\subsetneq N(\Gamma)=F(\Gamma).$$
		In this case, $\Gamma \in \Omega_1 \cup \Omega_1'$ is a nilpotent element. 
		\end{coro}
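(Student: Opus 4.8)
The plan is to derive this corollary almost immediately from the structural results already at hand, namely Theorem \ref{thm1}, Corollary \ref{cor1}, and Lemma \ref{lemma0}; no new computation with $ad_\Gamma$ is really needed beyond what appears in equation (\ref{eq1}).

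First I would pin down $F(\Gamma)$ and $N(\Gamma)$ under the hypotheses. By Theorem \ref{thm1}, $F(\Gamma) = F(z_1)\otimes F(z_2)$, and since $F(z_i) = N(z_i)$ for $i=1,2$ by assumption, this gives $F(\Gamma) = N(z_1)\otimes N(z_2)$. On the other hand, the hypothesis $F(z_1)=N(z_1)$ is exactly what Corollary \ref{cor1} requires, so $N(\Gamma) = N(z_1)\otimes N(z_2)$ as well. Comparing, $F(\Gamma) = N(\Gamma) = N(z_1)\otimes N(z_2)$. Applying Lemma \ref{lemma0} to $\Gamma$ in the Poisson algebra $P_1\otimes P_2$, the equality $F(\Gamma)=N(\Gamma)$ yields at once $D(\Gamma)=C(\Gamma)$ and $Ev(\Gamma)=\{0\}$. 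This disposes of the first display.

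For the ``moreover'' clause, assume without loss of generality $C(z_1)\subsetneq N(z_1)$ and choose $v\in N(z_1)\setminus C(z_1)$. Since $\{z_2,1\}=0$, one has $1\in N(z_2)$, so $v\otimes 1\in N(z_1)\otimes N(z_2)=N(\Gamma)$. However $ad_\Gamma(v\otimes 1)=\{z_1,v\}\otimes 1+v\otimes\{z_2,1\}=\{z_1,v\}\otimes 1$, which is nonzero because $\{z_1,v\}\neq 0$ (as $v\notin C(z_1)$) and tensoring over the field $K$ is faithful. Hence $v\otimes 1\in N(\Gamma)\setminus C(\Gamma)$, so $C(\Gamma)\subsetneq N(\Gamma)$, and together with $D(\Gamma)=C(\Gamma)$, $F(\Gamma)=N(\Gamma)$ we obtain the chain $D(\Gamma)=C(\Gamma)\subsetneq N(\Gamma)=F(\Gamma)$. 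Finally, reading off the definition of the eight types: since $Ev(\Gamma)=\{0\}$ and $C(\Gamma)\subsetneq N(\Gamma)$, the element $\Gamma$ lies in $\Omega_1(P_1\otimes P_2)$ when $N(\Gamma)=P_1\otimes P_2$ and in $\Omega_1'(P_1\otimes P_2)$ otherwise, so in either case $\Gamma$ is nilpotent.

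There is no genuine obstacle here; the argument is a bookkeeping assembly of prior results. The only points deserving a moment's care are checking that the hypotheses of Corollary \ref{cor1} and Lemma \ref{lemma0} are literally satisfied (they are, by $F(z_1)=N(z_1)$ and $F(\Gamma)=N(\Gamma)$ respectively) and verifying $v\otimes 1\notin C(\Gamma)$, i.e.\ that $\{z_1,v\}\otimes 1\neq 0$, which is immediate from faithfulness of the tensor product over $K$ and $\{z_1,v\}\neq 0$.
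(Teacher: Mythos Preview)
Your proof is correct and follows essentially the same approach as the paper's. The only cosmetic difference is that you invoke Corollary \ref{cor1} to obtain $N(\Gamma)=N(z_1)\otimes N(z_2)$, whereas the paper uses the sandwich $F(\Gamma)=N(z_1)\otimes N(z_2)\subseteq N(\Gamma)\subseteq F(\Gamma)$ directly from Theorem \ref{thm1} and the obvious inclusion in Remark \ref{rem0}; the remainder of your argument (Lemma \ref{lemma0} and the element $v\otimes 1$) matches the paper verbatim.
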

	\begin{proof}
		From Theorem \ref{thm1}, it follows that $F(\Gamma) = N(z_1) \otimes N(z_2)\subseteq N(\Gamma)\subseteq F(\Gamma)$, with equalities holding. From Lemma \ref{lemma0}, we know that $D(\Gamma) = C(\Gamma), Ev(\Gamma) = \{0\}$.
		
		If $C(z_1)\subsetneq N(z_1)$, we choose $v\in N(z_1)\setminus C(z_1)$. Then $v \otimes 1\in N(\Gamma) \setminus C(\Gamma) $, so $C(\Gamma)\subsetneq N(\Gamma)$. Similarly, if $C(z_2)\subsetneq N(z_2)$, we can prove that  $C(\Gamma)\subsetneq N(\Gamma)$.
	\end{proof}
	Similarly, it can be proved that:
	\begin{coro} \label{coro2}
		If $F(z_1) = D(z_1), F(z_2) = D(z_2)$, then 
		$$F(\Gamma) = D(\Gamma) = D(z_1)\ot D(z_2),\quad  N(\Gamma) = C(\Gamma).$$
		Furthermore, if $C(z_1)\subsetneq D(z_1)$ or $C(z_2)\subsetneq D(z_2)$, then 
		$$N(\Gamma) = C(\Gamma)\subsetneq D(\Gamma)=F(\Gamma),$$
		and in this case $\Gamma \in \Omega_2\cup\Omega_2'$ is semisimple.
		\end{coro}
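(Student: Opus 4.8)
The plan is to follow the proof of Corollary \ref{coro1} almost verbatim, with the nil-algebras $N(z_i)$ there replaced throughout by the eigenvalue-algebras $D(z_i)$, and with Corollary \ref{cor2} invoked in place of the $Ev(z_1)=\{0\}$ argument used in that proof. First I would apply Theorem \ref{thm1}: since $F(z_1)=D(z_1)$ and $F(z_2)=D(z_2)$ by hypothesis, $F(\Gamma)=F(z_1)\ot F(z_2)=D(z_1)\ot D(z_2)$. As $F(z_1)=D(z_1)$, Corollary \ref{cor2} applies and gives $D(\Gamma)=D(z_1)\ot D(z_2)$ as well, whence $F(\Gamma)=D(\Gamma)=D(z_1)\ot D(z_2)$. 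Then Lemma \ref{lemma0}, applied to $\Gamma$ with $F(\Gamma)=D(\Gamma)$, yields $N(\Gamma)=C(\Gamma)$, settling the first assertion.

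For the strict-inclusion part, suppose $C(z_1)\subsetneq D(z_1)$ (the case $C(z_2)\subsetneq D(z_2)$ is symmetric). Since $D(z_1)=\bigoplus_{\lambda\in Ev(z_1)}D(z_1,\lambda)$ and $C(z_1)=D(z_1,0)$, there is some $\mu\in Ev(z_1)\setminus\{0\}$ and a nonzero $v\in D(z_1,\mu)$. By equation (\ref{eq1}) with $w=1$ (so that $\varphi_2(1)=0$), one has $ad_\Gamma(v\ot 1)=\varphi_1(v)\ot 1=\mu\, v\ot 1$, hence $v\ot 1\in D(\Gamma,\mu)\subseteq D(\Gamma)$ while $v\ot 1\notin C(\Gamma)$; in particular $\mu\in Ev(\Gamma)$, so $Ev(\Gamma)\supsetneq\{0\}$ and $C(\Gamma)\subsetneq D(\Gamma)$. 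Combining this with $N(\Gamma)=C(\Gamma)$ and $D(\Gamma)=F(\Gamma)$ gives the chain $N(\Gamma)=C(\Gamma)\subsetneq D(\Gamma)=F(\Gamma)$. Reading off the definition of the types, since $D(\Gamma)=F(\Gamma)$ and $Ev(\Gamma)\supsetneq\{0\}$ we get $\Gamma\in\Omega_2$ when $F(\Gamma)=P_1\ot P_2$ and $\Gamma\in\Omega_2'$ when $F(\Gamma)\subsetneq P_1\ot P_2$, so $\Gamma$ is semisimple either way.

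I do not expect a genuine obstacle here, since all the structural inputs (Theorem \ref{thm1}, Corollary \ref{cor2}, Lemma \ref{lemma0}, and the explicit formula (\ref{eq1}) for $ad_\Gamma$ on simple tensors) are already available; the proof is essentially a matter of assembling them in the right order. The only point that genuinely needs a line of verification is that $v\ot 1$ is an eigenvector of $ad_\Gamma$ with eigenvalue $\mu$, which is immediate from (\ref{eq1}), and — as a side remark — that one should be mildly careful to note $\mu\neq 0$ so as to conclude $Ev(\Gamma)\supsetneq\{0\}$ rather than merely $Ev(\Gamma)\supseteq\{0\}$, which is what places $\Gamma$ in $\Omega_2\cup\Omega_2'$ rather than in an $\Omega_1$-type set.
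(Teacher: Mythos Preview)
Your proof is correct and essentially follows the paper's approach (the paper only writes ``Similarly, it can be proved that'' and leaves the reader to mirror the proof of Corollary~\ref{coro1}). The one small difference is that you invoke Corollary~\ref{cor2} to obtain $D(\Gamma)=D(z_1)\ot D(z_2)$, whereas the direct analogue of the Corollary~\ref{coro1} argument uses the simpler sandwich $F(\Gamma)=D(z_1)\ot D(z_2)\subseteq D(\Gamma)\subseteq F(\Gamma)$ coming from Theorem~\ref{thm1} and Remark~\ref{rem0}; both routes are valid and the remainder of your argument (Lemma~\ref{lemma0}, the eigenvector $v\ot 1$ via~(\ref{eq1})) matches exactly.
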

	\begin{coro} \label{coro3}
		If $F(z_1) = N(z_1), F(z_2) = D(z_2)$, then 
		$$F(\Gamma) = N(z_1) \otimes D(z_2),$$
		$$N(\Gamma) = N(z_1) \otimes N(z_2) = N(z_1) \otimes C(z_2),$$
		$$D(\Gamma) = D(z_1) \otimes D(z_2) = C(z_1) \otimes D(z_2),$$
		$$C(\Gamma) = C(z_1) \otimes C(z_2).$$
		If $C(z_1)\subsetneq N(z_1)$, then $C(\Gamma)\subsetneq N(\Gamma)$; if $C(z_2)\subsetneq D(z_2)$, then  $C(\Gamma)\subsetneq D(\Gamma)$; if $C(z_1)\subsetneq N(z_1) $ and $ C(z_2)\subsetneq D(z_2)$, then 
		$$N(\Gamma)\supsetneq C(\Gamma) \subsetneq D(\Gamma)\subsetneq F(\Gamma),$$
		and in this case, $\Gamma \in \Omega_3\cup\Omega_3'$ is of Jordan type.
		\end{coro}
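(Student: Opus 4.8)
The plan is to obtain the four subalgebras associated with $\Gamma$ by specializing the general descriptions of $F(\Gamma)$, $N(\Gamma)$ and $D(\Gamma)$ already proved in this section. First note that, by Lemma \ref{lemma0}, the hypothesis $F(z_1)=N(z_1)$ forces $Ev(z_1)=\{0\}$ and $D(z_1)=C(z_1)$, while $F(z_2)=D(z_2)$ forces $N(z_2)=C(z_2)$. Theorem \ref{thm1} then gives $F(\Gamma)=F(z_1)\otimes F(z_2)=N(z_1)\otimes D(z_2)$. Since $F(z_1)=N(z_1)$, Corollary \ref{cor1} applies and yields $N(\Gamma)=N(z_1)\otimes N(z_2)=N(z_1)\otimes C(z_2)$. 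Since $F(z_2)=D(z_2)$, Corollary \ref{cor2} applies and yields $D(\Gamma)=D(z_1)\otimes D(z_2)=C(z_1)\otimes D(z_2)$.

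For $C(\Gamma)$ I would use the identity $C(\Gamma)=N(\Gamma)\cap D(\Gamma)$ from Section \ref{chap2} together with the elementary linear algebra fact that, for subspaces $A,A'\subseteq V$ and $B,B'\subseteq W$, one has $(A\otimes B)\cap(A'\otimes B')=(A\cap A')\otimes(B\cap B')$ inside $V\otimes W$ (proved by choosing a basis of $V$ adapted to $A\cap A'\subseteq A,A'$ and similarly for $W$). This gives $C(\Gamma)=(N(z_1)\cap D(z_1))\otimes(N(z_2)\cap D(z_2))=C(z_1)\otimes C(z_2)$. Alternatively one can avoid this lemma by writing a general element of $N(\Gamma)\cap D(\Gamma)$ as $\sum_i v_i\otimes w_i$ with the $v_i$ linearly independent over $K$ and expanding $ad_\Gamma$ via formula (\ref{eq1}), exactly as in the proofs of the preceding corollaries.

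It then remains to produce the strict inclusions and read off the type. If $C(z_1)\subsetneq N(z_1)$, choose $v\in N(z_1)\setminus C(z_1)$; then $v\otimes 1\in N(z_1)\otimes C(z_2)=N(\Gamma)$ but $ad_\Gamma(v\otimes 1)=ad_{z_1}(v)\otimes 1\neq 0$, so $C(\Gamma)\subsetneq N(\Gamma)$. If $C(z_2)\subsetneq D(z_2)$, there is $\lambda\in Ev(z_2)\setminus\{0\}$ and $0\neq w\in D(z_2,\lambda)$; then $1\otimes w\in C(z_1)\otimes D(z_2)=D(\Gamma)$ and $ad_\Gamma(1\otimes w)=\lambda(1\otimes w)\neq 0$, so $\lambda\in Ev(\Gamma)\setminus\{0\}$ and $C(\Gamma)\subsetneq D(\Gamma)$. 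When both hypotheses hold one moreover has $D(\Gamma)=C(z_1)\otimes D(z_2)\subsetneq N(z_1)\otimes D(z_2)=F(\Gamma)$, because $C(z_1)\subsetneq N(z_1)$ and $D(z_2)\neq\{0\}$ (it contains $1$); hence $N(\Gamma)\supsetneq C(\Gamma)\subsetneq D(\Gamma)\subsetneq F(\Gamma)$. Since $Ev(\Gamma)\supsetneq\{0\}$ and $C(\Gamma)\subsetneq N(\Gamma)$, Lemma \ref{lemma3} finally gives $\Gamma\in\Omega_3$ when $F(\Gamma)=P_1\otimes P_2$ and $\Gamma\in\Omega_3'$ otherwise, so $\Gamma$ is of Jordan type.

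The whole argument is a routine assembly of Theorem \ref{thm1} and Corollaries \ref{cor1} and \ref{cor2}; the only mildly non-formal point is the identification $C(\Gamma)=C(z_1)\otimes C(z_2)$, and even that is a one-line consequence of the intersection identity for tensor products of subspaces (or of the direct computation indicated above), so I do not expect any genuine obstacle here.
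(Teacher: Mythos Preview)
Your proposal is correct and follows essentially the same route as the paper: invoke Lemma~\ref{lemma0} for the reductions $D(z_1)=C(z_1)$ and $N(z_2)=C(z_2)$, apply Theorem~\ref{thm1}, Corollary~\ref{cor1}, Corollary~\ref{cor2} for $F(\Gamma)$, $N(\Gamma)$, $D(\Gamma)$, intersect to get $C(\Gamma)$, and finish with Lemma~\ref{lemma3}. The only cosmetic differences are that the paper obtains the strict inclusions by directly comparing the tensor factors (e.g.\ $C(z_1)\otimes C(z_2)\subsetneq N(z_1)\otimes C(z_2)$) rather than exhibiting explicit witnesses like $v\otimes 1$, and it does not separately argue $D(\Gamma)\subsetneq F(\Gamma)$ since that is already absorbed into Lemma~\ref{lemma3}.
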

	\begin{proof}
		From Lemma \ref{lemma0}, it's known that, $D(z_1) = C(z_1), N(z_2) =C(z_2)$. From Theorem \ref{thm1}, it's known that 
		$$F(\Gamma) = F(z_1)\ot F(z_2)= N(z_1) \otimes D(z_2).$$
		From Corollary \ref{cor1}, it's known that 
		$N(\Gamma) = N(z_1)\ot N(z_2) = N(z_1)\ot C(z_2)$.
		From Corollary \ref{cor2}, it's known that 
		$D(\Gamma) = D(z_1)\ot D(z_2) = C(z_1)\ot D(z_2)$.
		Therefore, 
		$$C(\Gamma) = (N(z_1)\ot C(z_2)) \cap (C(z_1)\ot D(z_2)) = C(z_1)\ot C(z_2).$$
		\noindent If $C(z_1)\subsetneq N(z_1)$, then 
		$$C(\Gamma) = C(z_1)\ot C(z_2)\subsetneq N(z_1)\ot C(z_2)  =N(\Gamma).$$
		If $C(z_2) \subsetneq D(z_2) $, then 
		$$C(\Gamma) = C(z_1)\ot C(z_2)\subsetneq C(z_1)\ot D(z_2)  =D(\Gamma).$$
		
		\noindent If $C(z_1)\subsetneq N(z_1)$ and $C(z_2) \subsetneq D(z_2) $, from Lemma \ref{lemma3}, it's known that $\Gamma\in \Omega_3\cup\Omega_3'$.
	\end{proof}
	The following conclusion can be proved in a similar way. 
	\begin{coro}
		If $F(z_1) = C(z_1)\subsetneq P_1$ i.e. $z_1\in \Omega_0'(P_1)$, then 
		$$F(\Gamma) = C(z_1) \otimes F(z_2),\quad N(\Gamma) = C(z_1) \otimes N(z_2),$$
		$$D(\Gamma) = C(z_1) \otimes D(z_2),\quad C(\Gamma) = C(z_1) \otimes C(z_2).$$
		In this case, the possible types of $\Gamma$ are as follows:
		
		\noindent If $z_2\in \Omega_0(P_1)\cup \Omega_0'(P_1)$, then $\Gamma\in \Omega_0'(P_1\ot P_2)$;
		
		\noindent If $z_2\in \Omega_1(P_1)\cup \Omega_1'(P_1)$, then $\Gamma\in \Omega_1'(P_1\ot P_2)$;
		
		\noindent If $z_2\in \Omega_2(P_1)\cup \Omega_2'(P_1)$, then $\Gamma\in \Omega_2'(P_1\ot P_2)$;
		
		\noindent If $z_2\in \Omega_3(P_1)\cup \Omega_3'(P_1)$, then $\Gamma\in \Omega_3'(P_1\ot P_2)$.
	\end{coro}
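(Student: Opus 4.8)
The plan is to read off all four subalgebras of $\Gamma$ from results already proved for $\Gamma$, using only that $z_1\in\Omega_0'(P_1)$ forces $Ev(z_1)=\{0\}$ and $F(z_1)=C(z_1)\subsetneq P_1$; consequently $F(z_1)=N(z_1)=D(z_1)=C(z_1)$, since $F(z_1)=F(z_1,0)=N(z_1)$ and $D(z_1)=D(z_1,0)=C(z_1)$ whenever $Ev(z_1)=\{0\}$. First I would invoke Theorem \ref{thm1} to get $F(\Gamma)=F(z_1)\ot F(z_2)=C(z_1)\ot F(z_2)$. Because $F(z_1)=N(z_1)$, Corollary \ref{cor1} then yields $N(\Gamma)=N(z_1)\ot N(z_2)=C(z_1)\ot N(z_2)$, and because $F(z_1)=D(z_1)$, Corollary \ref{cor2} yields $D(\Gamma)=D(z_1)\ot D(z_2)=C(z_1)\ot D(z_2)$. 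For the centralizer I would combine $C(\Gamma)=N(\Gamma)\cap D(\Gamma)$ with the elementary identity $(U\ot V_1)\cap(U\ot V_2)=U\ot(V_1\cap V_2)$ for subspaces over a field (proved by expressing every vector in a fixed basis of $U$), obtaining $C(\Gamma)=C(z_1)\ot\big(N(z_2)\cap D(z_2)\big)=C(z_1)\ot C(z_2)$.

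Next I would pin down $Ev(\Gamma)$: by Lemma \ref{lemma2} one has $F(\Gamma,\lambda)=\bigoplus_{\mu\in K}F(z_1,\mu)\ot F(z_2,\lambda-\mu)$, and since $Ev(z_1)=\{0\}$ only the $\mu=0$ summand survives, so $F(\Gamma,\lambda)=C(z_1)\ot F(z_2,\lambda)$; as $1\in C(z_1)$ this gives $Ev(\Gamma)=Ev(z_2)$. It also shows $F(\Gamma)=C(z_1)\ot F(z_2)\subsetneq P_1\ot P_2$, because $C(z_1)\subsetneq P_1$, so $\Gamma$ is always weak.

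Finally I would run the case analysis on the type of $z_2$, substituting the standard descriptions of $F(z_2),N(z_2),D(z_2),C(z_2)$ into the four identities above and using two bookkeeping facts: $C(z_1)\subsetneq P_1$, hence $C(z_1)\ot X\subsetneq P_1\ot P_2$ for every $X\subseteq P_2$; and $C(z_1)\ne 0$, hence any strict inclusion of subspaces of $P_2$ remains strict after tensoring with $C(z_1)$. If $z_2\in\Omega_0(P_2)\cup\Omega_0'(P_2)$, then $F(z_2)=N(z_2)=D(z_2)=C(z_2)$ and $Ev(z_2)=\{0\}$, so $C(\Gamma)=F(\Gamma)\subsetneq P_1\ot P_2$ and $\Gamma\in\Omega_0'(P_1\ot P_2)$. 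If $z_2\in\Omega_1(P_2)\cup\Omega_1'(P_2)$, then $Ev(z_2)=\{0\}$ and $C(z_2)\subsetneq N(z_2)=F(z_2)$, so $C(\Gamma)\subsetneq N(\Gamma)=F(\Gamma)\subsetneq P_1\ot P_2$ and $\Gamma\in\Omega_1'(P_1\ot P_2)$. If $z_2\in\Omega_2(P_2)\cup\Omega_2'(P_2)$, then $Ev(z_2)\supsetneq\{0\}$ and $C(z_2)\subsetneq D(z_2)=F(z_2)$, so $C(\Gamma)\subsetneq D(\Gamma)=F(\Gamma)\subsetneq P_1\ot P_2$ and $\Gamma\in\Omega_2'(P_1\ot P_2)$. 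If $z_2\in\Omega_3(P_2)\cup\Omega_3'(P_2)$, then $Ev(z_2)\supsetneq\{0\}$ and $D(z_2)\subsetneq F(z_2)$, so $D(\Gamma)\subsetneq F(\Gamma)\subsetneq P_1\ot P_2$ and $\Gamma\in\Omega_3'(P_1\ot P_2)$. I expect no genuine obstacle: everything reduces to the already-established Theorem \ref{thm1} and Corollaries \ref{cor1} and \ref{cor2}, and the only points needing care are the tensor-intersection identity and the preservation of strict inclusions under $C(z_1)\ot(-)$, both routine linear algebra.
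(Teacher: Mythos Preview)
Your proposal is correct and follows exactly the approach the paper intends: it says this corollary ``can be proved in a similar way'' to Corollary~\ref{coro3}, and your argument is precisely that adaptation, invoking Theorem~\ref{thm1}, Corollary~\ref{cor1}, Corollary~\ref{cor2}, and the intersection $C(\Gamma)=N(\Gamma)\cap D(\Gamma)$ just as the proof of Corollary~\ref{coro3} does. Your additional use of Lemma~\ref{lemma2} to identify $Ev(\Gamma)=Ev(z_2)$ and the explicit handling of the tensor-intersection identity and preservation of strict inclusions are the natural details one fills in, and there is no gap.
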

	\begin{coro}
		Let $z_1\in \Omega_3(P_1)\cup \Omega_3'(P_1),z_2\in P_2$, then $\Gamma\in \Omega_3(P_1\ot P_2)\cup\Omega_3'(P_1\ot P_2)$.
	\end{coro}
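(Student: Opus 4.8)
The plan is to verify directly the two inequalities that characterise Jordan type for $\Gamma$, namely $Ev(\Gamma)\supsetneq\{0\}$ and $D(\Gamma)\subsetneq F(\Gamma)$. Since the eight sets $\Omega_i,\Omega_i'$ partition $P_1\ot P_2$, and among the four sets with $Ev\supsetneq\{0\}$ (that is, $\Omega_2,\Omega_2',\Omega_3,\Omega_3'$) only $\Omega_3$ and $\Omega_3'$ satisfy $D\subsetneq F$, establishing these two facts will immediately place $\Gamma$ in $\Omega_3(P_1\ot P_2)\cup\Omega_3'(P_1\ot P_2)$. I do not need to separate the two cases, but will note at the end that $\Gamma\in\Omega_3$ precisely when $F(\Gamma)=F(z_1)\ot F(z_2)=P_1\ot P_2$, i.e. when $z_1\in\Omega_3(P_1)$ and $F(z_2)=P_2$, and $\Gamma\in\Omega_3'$ otherwise.

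First I would use the hypothesis $z_1\in\Omega_3(P_1)\cup\Omega_3'(P_1)$, which by definition forces $Ev(z_1)\supsetneq\{0\}$: pick $\lambda_0\in Ev(z_1)\setminus\{0\}$ and a nonzero $v\in D(z_1,\lambda_0)$. Because $ad_{z_2}(1)=0$ holds in every non-commutative Poisson algebra, equation (\ref{eq1}) with $n=1$ gives $ad_\Gamma(v\ot 1)=ad_{z_1}(v)\ot 1=\lambda_0(v\ot 1)$, so $\lambda_0\in Ev(\Gamma)$ and hence $Ev(\Gamma)\supsetneq\{0\}$.

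Next, the same hypothesis gives $D(z_1)\subsetneq F(z_1)$, so there is a $\mu_0\in Ev(z_1)$ and a homogeneous element $v\in F(z_1,\mu_0)\setminus D(z_1,\mu_0)$. By Theorem \ref{thm1}, $v\ot 1\in F(z_1)\ot F(z_2)=F(\Gamma)$, and since $(ad_\Gamma-\mu_0)^k(v\ot 1)=(ad_{z_1}-\mu_0)^k(v)\ot 1=0$ for $k\gg 0$, in fact $v\ot 1\in F(\Gamma,\mu_0)$; alternatively this follows at once from the description $F(\Gamma,\mu_0)=\bigoplus_\mu F(z_1,\mu)\ot F(z_2,\mu_0-\mu)$ of Lemma \ref{lemma2}. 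The crucial step is to show $v\ot 1\notin D(\Gamma)$. Since $D(\Gamma)=\bigoplus_\lambda D(\Gamma,\lambda)$ with $D(\Gamma,\lambda)\subseteq F(\Gamma,\lambda)$ and $F(\Gamma)=\bigoplus_\lambda F(\Gamma,\lambda)$ is a direct sum, membership of $v\ot 1$ in $D(\Gamma)$ can be tested component by component: it would force $v\ot 1\in D(\Gamma,\mu_0)$, hence $ad_\Gamma(v\ot 1)=\mu_0(v\ot 1)$, hence $ad_{z_1}(v)\ot 1=\mu_0\,v\ot 1$, and by injectivity of $-\ot 1$ this means $ad_{z_1}(v)=\mu_0 v$, i.e. $v\in D(z_1,\mu_0)$, a contradiction. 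Therefore $v\ot 1\in F(\Gamma)\setminus D(\Gamma)$ and $D(\Gamma)\subsetneq F(\Gamma)$, which completes the argument.

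The main (and only mild) obstacle is precisely this last "component by component" step: one has to be careful that the generalized-eigenspace decomposition of $F(\Gamma)$ is compatible with $D(\Gamma)=\bigoplus_\lambda D(\Gamma,\lambda)$, which is exactly where Lemma \ref{lemma2} is convenient, since it makes the containment $v\ot 1\in F(\Gamma,\mu_0)$ transparent. Everything else is formal bookkeeping. Note that the argument uses neither commutativity of $gr\ P_1$ nor of $gr\ P_2$ under the bracket (consistent with the remark following Theorem \ref{thm1}), and it works for an arbitrary $z_2\in P_2$, including central or scalar $z_2$, since the only facts about $P_2$ used are $ad_{z_2}(1)=0$ and the injectivity of $-\ot 1$.
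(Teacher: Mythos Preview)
Your proof is correct and follows essentially the same approach as the paper: both arguments exhibit a nonzero eigenvalue of $ad_\Gamma$ coming from $Ev(z_1)\setminus\{0\}$ (the paper invokes the sumset identity $Ev(\Gamma)=Ev(z_1)+Ev(z_2)$ from Lemma~\ref{lemma2}, while you display an explicit eigenvector $v\ot 1$), and both then take $v\in F(z_1,\mu_0)\setminus D(z_1,\mu_0)$ and check that $v\ot 1\in F(\Gamma,\mu_0)\setminus D(\Gamma,\mu_0)$ by computing $(ad_\Gamma-\mu_0)^k(v\ot 1)=(ad_{z_1}-\mu_0)^k(v)\ot 1$. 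Your extra care in justifying why $v\ot 1\notin D(\Gamma)$ (rather than merely $\notin D(\Gamma,\mu_0)$) via the direct-sum decomposition is a nice touch that the paper leaves implicit.
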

	\begin{proof}
		It is known from Lemma \ref{lemma2} that 
		$$Ev(z_1) + Ev(z_2) = Ev(\Gamma).$$
		Since $z_1\in \Omega_3(P_1)\cup\Omega_3'(P_1)$, we have $\{0\} \subsetneq Ev(z_1),\{0\}\subsetneq Ev(\Gamma),C(\Gamma)\subsetneq D(\Gamma)$. 
		
		Let $\varphi_1 = ad_{z_1}$. There exists $\lambda\in Ev(z_1),w\in F(z_1, \lambda)$ and $n\geq 1$ such that 
		$$(\varphi_1 - \lambda)^n(w)\neq 0,\quad (\varphi_1-\lambda)^{n+1}(w) = 0.$$
		In this case, $(ad_\Gamma-\lambda)^n(w\ot 1) = (\varphi_1-\lambda)^n(w)\ot 1\neq 0$, and $(ad_\Gamma-\lambda)^{n+ 1}(w\ot 1) = 0$. Therefore $D(\Gamma)\subsetneq F(\Gamma)$$,\Gamma\in \Omega_3(P_1\ot P_2 )\cup\Omega_3'(P_1\ot P_2)$.
	\end{proof}
	Next, we apply the results above to Weyl algebras. It is shown in last section that Weyl algebras satisfy the conditions of the above theorem.
	
	It is known that $p\in \Omega_1(A_1),pq \in \Omega_2(A_1)$. By applying Corollary \ref{coro1} and Corollary \ref{coro3}, we can conclude that $p\ot 1 + 1 \ot p \in \Omega_1(A_2),$ $p\ot 1 + 1\ot pq \in \Omega_3(A_2)$, and also that 
	$$p\ot1\ot 1 + 1\ot p\ot 1 + 1\ot1\ot p \in \Omega_1(A_3).$$
	
	\begin{coro}\label{coro4}
		Let $z_1, z_2\in A_1\setminus K,\Gamma = z_1\ot 1 + 1\ot z_2$. If $z_1$ is nilpotent, $z_2$ is semisimple, then  $C(\Gamma)$ is commutative. If either $z_1$ or $z_2$ belongs to $\Omega_0'$, then $C(\Gamma)$ is also commutative. 
		\end{coro}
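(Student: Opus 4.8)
The plan is to reduce, in each of the two cases, to the structural identity $C(\Gamma)=C(z_1)\otimes C(z_2)$; once that is available, commutativity is immediate from the classical fact (\cite{a}) that the centralizer of a non-constant element of $A_1$ is commutative, together with the elementary observation that a tensor product over $K$ of commutative $K$-algebras is commutative.

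For the first case, where $z_1$ is nilpotent and $z_2$ is semisimple, I would note that $z_1$ nilpotent forces $Ev(z_1)=\{0\}$ and hence $F(z_1)=N(z_1)$, while $z_2$ semisimple gives $F(z_2)=D(z_2)$. Since $A_1$ equipped with the Bernstein degree map $\beta_1$ carries a finite discrete filtration (as recorded at the end of Section 5), Corollary \ref{coro3} applies with $P_1=P_2=A_1$ and yields exactly $C(\Gamma)=C(z_1)\otimes C(z_2)$. For the second case, I would assume without loss of generality that $z_1\in\Omega_0'(A_1)$, i.e. $F(z_1)=C(z_1)\subsetneq A_1$; then the corollary immediately preceding Corollary \ref{coro4} (the one treating $z_1\in\Omega_0'(P_1)$) gives $C(\Gamma)=C(z_1)\otimes C(z_2)$ with no hypothesis on the type of $z_2$. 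If instead it is $z_2$ that lies in $\Omega_0'(A_1)$, the same identity follows after applying the flip isomorphism $A_1\otimes A_1\to A_1\otimes A_1$, $a\otimes b\mapsto b\otimes a$, which is a Poisson isomorphism carrying $\Gamma$ to the corresponding element for the pair $(z_2,z_1)$, and then relabelling. In both cases $z_1,z_2\in A_1\setminus K=A_1\setminus Z(A_1)$ are non-constant, so $C(z_1)$ and $C(z_2)$ are commutative by \cite{a}, whence $C(\Gamma)=C(z_1)\otimes C(z_2)$ is commutative.

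I do not anticipate a genuine obstacle; the argument is formal once the decomposition $C(\Gamma)=C(z_1)\otimes C(z_2)$ is in hand. The only points needing a word of justification are that the hypotheses of the cited corollaries are met here — namely the finite discrete filtration on $A_1$ coming from $\beta_1$ and the non-centrality of $z_1$ and $z_2$ — and, in the $\Omega_0'$ case with the roles reversed, that the flip is indeed a morphism of non-commutative Poisson algebras of Class 2 (it preserves the commutator since it is an algebra isomorphism). If anything is delicate it is merely bookkeeping, not mathematics.
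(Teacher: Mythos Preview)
Your proof is correct and follows the same strategy as the paper's: establish $C(\Gamma)=C(z_1)\otimes C(z_2)$ and then invoke Amitsur's commutativity result for centralizers in $A_1$. The only minor divergence is in the $\Omega_0'$ case, where the paper instead reduces everything to Corollary~\ref{coro3} by splitting on whether $z_2$ is semisimple (using that in $A_1$ every non-central element satisfies $F=N$ or $F=D$); your appeal to the $\Omega_0'$-specific corollary is more direct and equally valid, though note that the corollary \emph{immediately} preceding Corollary~\ref{coro4} is the one about $\Omega_3$, not the $\Omega_0'$ one you intend.
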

	\begin{proof}
		When $z_1$ is nilpotent and $z_2$ is semisimple, or when $z_1 \in \Omega_0'$ and $z_2$ is semisimple, then $F(z_1) = N(z_1), F(z_2) = D(z_2)$; when $z_1 \in \Omega_0'$, and $z_2$ is not semisimple, then $F(z_1) = D(z_1), F(z_2) = N(z_2)$; the same applies when $z_2 \in \Omega_0'$. In summary, $z_1,z_2$ satisfy the conditions of Corollary \ref{coro3}. It is known that the centralizers of any non-constant elements in $A_1$ are commutative {\cite{a}}, and by Corollary \ref{coro3}, $C(z_1 \otimes 1 + 1 \otimes z_2) = C(z_1) \otimes C(z_2)$ is commutative.
	\end{proof}
	When both $z_1$ and $z_2$ are nilpotent  in $A_1$, Corollary \ref{coro4} may not hold. For example, 
	$$ C(p \otimes 1 + 1 \otimes p) = K - \langle q \otimes 1 - 1 \otimes q, p \otimes 1, 1 \otimes p \rangle, $$ the subalgebra of $A_2$ generated by $q \otimes 1 - 1 \otimes q, p \otimes 1, 1 \otimes p$.
	There is also a counterexample when both elements are semisimple: 
	$$ C(pq \otimes 1 + 1 \otimes pq) = K - \langle p \otimes q, pq \otimes 1, 1 \otimes pq, q \otimes p \rangle. $$
	Therefore, the centralizers of noncentral elements in $A_n$ with $n\ge2$ are not necessarily commutative, and in these two counterexamples, $C(z_1) \otimes C(z_2) \subsetneq C(\Gamma)$.
	
	By analogy with Proposition \ref{lemma1}, we can draw the following conclusion.
	
	\begin{coro}
		Let $\varphi,\psi: A_1 \to A_1$ be endomorphisms  such that Im$\,\varphi \subsetneq A_1$ and Im$\,\psi \subsetneq A_1$. Define $\xi = \varphi \otimes \psi: A_2 \rightarrow A_2,v\ot w\mapsto \varphi(v)\ot \psi(w)$. Let $z_1, z_2 \in A_1 \setminus K, \Gamma = z_1 \otimes 1 + 1 \otimes z_2$, then 
		$$ F(\xi(\Gamma)) = F(\varphi(z_1)) \otimes F(\psi(z_2)). $$
		
		If $z_1, z_2$ are nilpotent, then $\xi(\Gamma) \in \Omega_1'(A_2)$;
		if $z_1, z_2$ are semisimple, then $\xi(\Gamma) \in \Omega_2'(A_2)$;
		if one of $z_1, z_2$ is nilpotent and the other is semisimple, then $\xi(\Gamma) \in \Omega_3'(A_2)$.
	\end{coro}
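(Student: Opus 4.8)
The plan is to reduce the statement to the results on elements of the form $w_1\otimes 1+1\otimes w_2$ already established in this section, applied to $w_1=\varphi(z_1)$ and $w_2=\psi(z_2)$, whose types inside $A_1$ are pinned down by Proposition \ref{lemma1}. First I would record the cheap reductions: since $A_1$ is simple and $\varphi,\psi$ are unital they are injective and fix $K$ pointwise, so $\varphi(z_1)\notin K$ and $\psi(z_2)\notin K$; and since $\xi$ is $K$-linear with $\varphi(1)=\psi(1)=1$ we have $\xi(\Gamma)=\varphi(z_1)\otimes 1+1\otimes\psi(z_2)$, which is exactly the kind of element treated in Theorem \ref{thm1} (recall that $A_1$ together with the degree map $\beta_1$ was checked to satisfy the standing hypotheses of this section, so Theorem \ref{thm1} and Corollaries \ref{coro1}--\ref{coro3} all apply with $P_1=P_2=A_1$). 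Theorem \ref{thm1} then gives immediately $F(\xi(\Gamma))=F(\varphi(z_1))\otimes F(\psi(z_2))$, the first assertion.

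For the type statements I would feed into Corollaries \ref{coro1}, \ref{coro2}, \ref{coro3} the data about $w_1=\varphi(z_1)$ and $w_2=\psi(z_2)$ coming from Proposition \ref{lemma1}: since $\mathrm{Im}\,\varphi\subsetneq A_1$ and $\mathrm{Im}\,\psi\subsetneq A_1$, if $z_i$ is nilpotent then $w_i\in\Omega_1'(A_1)$, i.e.\ $C(w_i)\subsetneq N(w_i)=F(w_i)\subsetneq A_1$, while if $z_i$ is semisimple then $w_i\in\Omega_2'(A_1)$, i.e.\ $C(w_i)=N(w_i)\subsetneq D(w_i)=F(w_i)\subsetneq A_1$. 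Then three cases: if $z_1,z_2$ are both nilpotent, then $F(w_j)=N(w_j)$ and $C(w_j)\subsetneq N(w_j)$ for $j=1,2$, and Corollary \ref{coro1} gives $F(\xi(\Gamma))=N(\xi(\Gamma))=N(w_1)\otimes N(w_2)$ with $Ev(\xi(\Gamma))=\{0\}$ and $C(\xi(\Gamma))\subsetneq N(\xi(\Gamma))$; if $z_1,z_2$ are both semisimple, then $F(w_j)=D(w_j)$ and $C(w_j)\subsetneq D(w_j)$, and Corollary \ref{coro2} gives $F(\xi(\Gamma))=D(\xi(\Gamma))=D(w_1)\otimes D(w_2)$ with $N(\xi(\Gamma))=C(\xi(\Gamma))\subsetneq D(\xi(\Gamma))$; and if exactly one of $z_1,z_2$ is nilpotent and the other semisimple, then after relabelling via the flip automorphism of $A_2=A_1\otimes A_1$ (an algebra, hence Poisson, automorphism, so types are preserved) we may assume $z_1$ nilpotent and $z_2$ semisimple, so $F(w_1)=N(w_1)$, $F(w_2)=D(w_2)$, $C(w_1)\subsetneq N(w_1)$, $C(w_2)\subsetneq D(w_2)$, and Corollary \ref{coro3} gives $\xi(\Gamma)\in\Omega_3(A_2)\cup\Omega_3'(A_2)$.

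Finally I would pass from $\Omega_i(A_2)\cup\Omega_i'(A_2)$ to $\Omega_i'(A_2)$ in each case by observing that $F(\xi(\Gamma))=F(w_1)\otimes F(w_2)\subseteq F(w_1)\otimes A_1\subsetneq A_1\otimes A_1=A_2$, since $F(w_1)$ is a proper subspace of $A_1$; hence $F(\xi(\Gamma))\subsetneq A_2$, which rules out $\Omega_1(A_2)$, $\Omega_2(A_2)$, $\Omega_3(A_2)$ (each of which demands $F=A_2$). Combined with the strict inclusions among the associated subalgebras obtained above, this forces $\xi(\Gamma)\in\Omega_1'(A_2)$, $\Omega_2'(A_2)$, $\Omega_3'(A_2)$ respectively. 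I do not expect a genuine obstacle: the proof is essentially an assembly of Theorem \ref{thm1}, Proposition \ref{lemma1}, and Corollaries \ref{coro1}--\ref{coro3}; the only points meriting care are the injectivity of $\varphi$ and $\psi$ (so that the $A_1$-types of $z_i$ transfer to $w_i$ via Proposition \ref{lemma1}), the verification that $A_1$ meets the hypotheses of this section, and the elementary fact that a proper subspace of one tensor factor yields a proper subspace of $A_2$.
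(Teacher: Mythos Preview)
Your proposal is correct and follows essentially the same route as the paper's proof: compute $\xi(\Gamma)=\varphi(z_1)\otimes 1+1\otimes\psi(z_2)$, invoke Theorem \ref{thm1} for the formula $F(\xi(\Gamma))=F(\varphi(z_1))\otimes F(\psi(z_2))$, use Proposition \ref{lemma1} to place $\varphi(z_1),\psi(z_2)$ in $\Omega_1'(A_1)$ or $\Omega_2'(A_1)$, and then apply Corollaries \ref{coro1}--\ref{coro3}. Your write-up is in fact more explicit than the paper's on a few points the paper leaves implicit (injectivity of $\varphi,\psi$, the flip automorphism for the mixed case, and the argument that $F(w_1)\subsetneq A_1$ forces $F(\xi(\Gamma))\subsetneq A_2$), but the underlying argument is the same.
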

	\begin{proof}
		From Proposition \ref{lemma1}, it is known that if $z_1, z_2$ are nilpotent, then $\varphi(z_1), \psi(z_2) \in \Omega_1'(A_1)$. If $z_1, z_2$ are semisimple, then $\varphi(z_1), \psi(z_2) \in \Omega_2'(A_1)$. Therefore, if both $z_1, z_2$ are nilpotent, we have 
		$$ D(\varphi(z_1)) = C(\varphi(z_1)) \subsetneq N(\varphi(z_1)) = F(\varphi(z_1)) \subsetneq A_1.$$
		The same holds for $\psi(z_2)$. Since $\xi(\Gamma) = \varphi(z_1) \otimes 1 + 1 \otimes \psi(z_2)$, by Corollary \ref{coro1} we have 
		$$ D(\xi(\Gamma)) = C(\xi(\Gamma)) \subsetneq N(\xi(\Gamma)) = F(\xi(\Gamma)) = F(\varphi(z_1)) \otimes F(\psi(z_2)) \subsetneq A_2. $$
		Thus $\xi(\Gamma) \in \Omega_1'(A_2)$. The remaining cases follow similarly.
	\end{proof}

\end{document}